\documentclass{article}

\usepackage[utf8]{inputenc}
\usepackage[english]{babel}

\usepackage[utf8]{inputenc} 
\usepackage[T1]{fontenc}    
\usepackage{amsmath}
\usepackage{amssymb}
\usepackage{amsthm}
\usepackage{amsfonts}       
\usepackage[affil-it]{authblk}
\usepackage{enumitem}
\usepackage{url}            
\usepackage{blindtext}
\usepackage{booktabs}       
\usepackage{enumitem}
\usepackage[margin=3.0cm,a4paper]{geometry}
\usepackage{graphicx}
\graphicspath{{images/}{../images/}}
\usepackage[hyperindex,breaklinks]{hyperref}       
\usepackage{microtype}      
\usepackage{xcolor}         
\usepackage{subcaption}
\usepackage{tikz}
\usetikzlibrary{shapes,backgrounds,patterns,arrows}
\usepackage{pgfplots}
\pgfplotsset{compat=1.17}
\usepackage{todonotes} 
\usepackage{bm}

\usepackage{hyperref}       
\hypersetup{
    colorlinks,
    linkcolor={red!80!black},
    citecolor={green!80!black},
    urlcolor={blue!80!black}    
}


\usepackage[abbrev]{amsrefs}

\usepackage{enumitem,amssymb}
\newlist{todolist}{itemize}{2}
\setlist[todolist]{label=$\square$}
\usepackage{pifont}
%
%

\newtheorem{theorem}{Theorem}
\newtheorem{proposition}[theorem]{Proposition}
\newtheorem{lemma}[theorem]{Lemma}
\newtheorem{corollary}[theorem]{Corollary}
\newtheorem{assumption}[theorem]{Assumption}

\newtheorem{definition}[theorem]{Definition}
\newtheorem{example}[theorem]{Example}



\newcommand{\R}{\mathbb{R}}

\newcommand{\E}{\mathbb{E}}

\newcommand{\commentout}[1]{}

\newcommand{\realmatrix}

\usepackage{dsfont} 


\newcommand{\cardinality}[1]{ | #1 | }

\newcommand{\e}[1]{ {\mathrm{e}}^{ #1 } }

\newcommand{\expectation}[1]{ \mathbb{E} [ #1 ] }
\newcommand{\expectationWrt}[2]{ \mathbb{E}_{#2} [ #1 ] }
\newcommand{\expectationBig}[1]{ \mathbb{E} \Bigl[ #1 \Bigr] }
\newcommand{\expectationBigWrt}[2]{ \mathbb{E}_{#2} \Bigl[ #1 \Bigr] }

\newcommand{\indicator}[1]{ \mathds{1} [ #1 ] }

\newcommand{\indicatorBig}[1]{ \mathds{1} \Bigl[ #1 \Bigr] }

\newcommand{\process}[2]{ \{ #1 \}_{ #2 } }
\newcommand{\smallO}[1]{ o(#1) }
\newcommand{\smallObig}[1]{ o\Bigl(#1\Bigr) }
\newcommand{\bigO}[1]{ O(#1) }

\newcommand{\bigOP}[1]{ O_{\mathbb{P}}(#1) }

\newcommand{\pnorm}[2]{ \| #1 \|{}_{#2} }

\newcommand{\probability}[1]{ \mathbb{P} [ #1 ] }

\newcommand{\probabilityBig}[1]{ \mathbb{P} \Bigl[ #1 \Bigr] }

\newcommand{\transpose}[1]{ #1{}^{\mathrm{T}} }

\newcommand{\variance}[1]{ \mathrm{Var} [ #1 ] }

\newcommand{\naturalNumbersPlus}{ \mathbb{N}_{+} }
\newcommand{\naturalNumbersZero}{ \mathbb{N}_{0} }
\newcommand{\realNumbers}{ \mathbb{R} }

\newcommand{\criticalpoint}[1]{  #1^{\textnormal{opt}} }

\newcommand{\refAssumption}[1]{{\textrm{Assumption~\ref{#1}}}}
\newcommand{\refFigure}[1]{{\textrm{Figure~\ref{#1}}}}

\newcommand{\refDefinition}[1]{{\textrm{Definition~\ref{#1}}}}
\newcommand{\refTheorem}[1]{{\textrm{Theorem~\ref{#1}}}}

\newcommand{\refCorollary}[1]{{\textrm{Corollary~\ref{#1}}}}
\newcommand{\refProposition}[1]{{\textrm{Proposition~\ref{#1}}}}
\newcommand{\refLemma}[1]{{\textrm{Lemma~\ref{#1}}}}

\newcommand{\refSection}[1]{{\textrm{Section~\ref{#1}}}}

\newcommand{\refAppendixSection}[1]{\S{}\textrm{\ref{#1}}}

\newcommand{\QuodEratDemonstrandum}{\hfill \ensuremath{\Box}}

\def\eqcom#1{\overset{\textnormal{(#1)}}}



\def\E{{\mathbb E}}

\def\({{\Bigl(}}
\def\){{\Bigr)}}


\newcommand{\ba}{\begin{array}}
\newcommand{\ea}{\end{array}}

\newcommand{\eps}{\varepsilon}

\newcommand{\xdeleted}[1]{\deleted{}} 

\usepackage[acronym]{glossaries}

\def\CC{{C\nolinebreak[4]\hspace{-.05em}\raisebox{.4ex}{\tiny\bf ++}}}
\newacronym{BMC}{BMC}{Block Markov Chain}
\newacronym{ERRG}{ERRG}{Erd\"{o}s--R\'{e}nyi random graph}
\newacronym{GSL}{GSL}{GNU Scientific Library}
\newacronym{MSVC}{MSVC}{Microsoft Visual \CC}
\newacronym{SVD}{SVD}{Singular Value Decomposition}
\newacronym{SBM}{SBM}{Stochastic Block Model}
\newacronym{SPECTRA}{SPECTRA}{Sparse Eigenvalue Computation Toolkit as a Redesigned ARPACK}

\title{Spectral norm bounds\\ for block Markov chain random matrices}


\author{Jaron Sanders}
\author{Albert Senen--Cerda}

\affil{Eindhoven University of Technology\\ Department of Mathematics \& Computer Science\\ The Netherlands}

\date{}

\begin{document}
 
\maketitle

\glsresetall

\begin{abstract}
  This paper quantifies the asymptotic order of the largest singular value of a centered random matrix built from the path of a \gls{BMC}. In a \gls{BMC} there are $n$ labeled states, each state is associated to one of $K$ clusters, and the probability of a jump depends only on the clusters of the origin and destination. 
  Given a path $X_0, X_1, \ldots, X_{T_n}$ started from equilibrium, we construct a random matrix $\hat{N}$ that records the number of transitions between each pair of states. 
  We prove that 
  if $\omega(n) = T_n = o(n^2)$, then $\pnorm{\hat{N} - \expectation{\hat{N}}}{} = \Omega_{\mathbb{P}}(\sqrt{T_n/n})$. 
  We also prove that 
  if $T_n = \Omega(n \ln{n})$, then $\pnorm{\hat{N} - \expectation{\hat{N}}}{} = \bigOP{\sqrt{T_n/n}}$ as $n \to \infty$; 
  and if $T_n = \omega(n)$, a sparser regime, then $\pnorm{\hat{N}_\Gamma - \expectation{\hat{N}}}{} = \bigOP{\sqrt{T_n/n}}$. Here, $\hat{N}_{\Gamma}$ is a regularization that zeroes out entries corresponding to jumps to and from most-often visited states. 
  Together this establishes that the order is $\Theta_{\mathbb{P}}(\sqrt{T_n/n})$ for \glspl{BMC}.
\end{abstract}

\noindent 
\textbf{Keywords:} Random matrices, block Markov chains, spectral norms, asymptotic analysis, sparse random graphs, regularization.\\
 
\noindent
\textbf{MSC2020:} 60B20; 60J10.


\glsreset{BMC}
\section{Introduction}
\label{sec:Introduction}

Random graphs have in recent years found application in fields such as mathematics, physics, data science, network science, and biology. Analyses often focus on intriguing phenomena intrinsic to random graphs such as their connectivity, phase transitions, and spectral properties. These analyses can then be used to gain insights into applications of interest. The application that motivates the analysis of this paper is community detection. 

In typical community detection problems, one is given a graph consisting of labeled vertices between which there are edges describing their relations, and it is known \emph{a priori} that each vertex is associated to one of just a few clusters. 
The goal is to infer which vertex belongs to which cluster from the graph's structure---preferably without mistake. 
By specifying a mathematical model, one can conduct an analysis of the community detection problem. For example, one can presume a model that generates random graphs containing communities; see \cite{le2018concentration} for a list of models and references therein. 
One such model is, for example, the \gls{SBM} \cite{holland_stochastic_1983}. A canonical \gls{SBM} has $n$ vertices and $K$ clusters. Between each pair of vertices $v, w \in [n]$ an edge is drawn with probability $(q_n)_{\sigma(v), \sigma(w)} \in (0,1)$, independently of all other edges, where $\sigma : [n] \to [K]$ denotes the function that assigns each state its cluster. The random relations between vertices result in a random adjacency matrix $\hat{A}_{n}$, which noisily encodes the information about which vertex belongs to which cluster. 
For the \gls{SBM} and variants thereof, it was shown that asymptotic properties of $\hat{A}_n$ determine whether or not there exist algorithms that can recover the communities \cite{abbe_community_2015,massoulie_community_2014,yun_community_2014,abbe_recovering_2015, yun_optimal_2016}. In particular, a sufficiently tight bound on the spectral norm $\pnorm{\hat{A}_n - \expectation{\hat{A}_n}}{}$ is required for the deriviation of some of these guarantees \cite{lei2015consistency,keshavan2010matrix}. 

For many models, depending on how fast the edge probabilities $q_n$ decrease as $n \to \infty$, different asymptotic properties of $\hat{A}_n$ can be observed. 
This is already clear in case $K = 1$ for the \gls{SBM}, which then generates \glspl{ERRG} \cite{erdos1959random}. In such graphs, a phase transition can be observed.
Specifically, the connectivity of an \gls{ERRG} $\mathcal{G}_{n,q_n}$ depends on the asymptotics of the expected degree $n q_n$ of the graph. If $q_n = \omega(\ln{n}/n)$, then the random graph will be almost surely connected; and if $q_n = o(\ln{n}/n)$, then the random graph will be almost surely disconnected \cite{erdos1959random}. In fact, there is a sharp threshold for the connectedness of the random graph exactly at $q_n \asymp \ln{n}/n$. 
We refer to these scenarios as the dense, sparse, and critical regime, respectively. 
The existence of these regimes means that in order to study properties of the spectrum of $\hat{A}_n$, different approaches are needed. 

Feige and Ofek established in \cite{feige2005spectral} for \glspl{ERRG} that there is a gap between the largest eigenvalue and second-largest absolute eigenvalue of the adjacency matrix $\hat{A}_n$ in the dense regime. In terms of singular values $\sigma_1(\hat{A}_n) \geq \ldots \geq \sigma_n(\hat{A}_n)$, we have $\sigma_1(\hat{A}_n) \geq n q_n$ with high probability; and their results imply that if 
$
  \omega( { \ln{n} } / {n} ) 
  = 
  q_n 
  = 
  \bigO{ {n^{1/3}} / { ( n ( \ln{n} )^{5/3} ) } } 
  ,
$
then
\begin{equation}
  \sigma_1(\hat{A}_n  - \E[\hat{A}_n])
  =
  \bigOP{ \sqrt{n q_n} }
.\label{eqn:Feige_and_Ofeks_singular_value_gap}
\end{equation}
Consequently, in this dense regime, $
      \sigma_1( \hat{A}_n )
  = 
  \omega_{\mathbb{P}}( \sigma_2(\hat{A}_n) )
	$. We refer to the start of  \refSection{sec:Properties_of_BMCs} for precise definitions of $O_{\mathbb{P}}$ and $\omega_{\mathbb{P}}$.
	
The investigation in \cite{feige2005spectral} also goes into the sparse regime. There, some of the degrees in the graph are much larger than the average $n q_{n}$ and it can be proved, for example, that if $q_n = d/n$ for $d > 0$ independent of $n$, then $\sigma_1(\hat{A}_n) \geq (1 + o(1)) \sqrt{\ln(n)/ \ln \ln(n)}$ \cite{krivelevich2003largest}. A bound of the type in \eqref{eqn:Feige_and_Ofeks_singular_value_gap} can therefore not obviously be expected in this sparse regime. However, if $\hat{A}_n$ is regularized by e.g.\ using only states with degrees lower than a certain threshold, then \eqref{eqn:Feige_and_Ofeks_singular_value_gap} can still be obtained in the sparse regime: let $\Gamma^{\mathrm{c}} \subseteq [n]$ be the set of vertices in $\mathcal{G}_{n,d/n}$ of degree greater than $(1+\eps) d$ for some appropriately small $\epsilon$; if $A_n^\Gamma$ denotes the adjacency matrix of the subgraph $\mathcal{G}_{n,d/n}^\Gamma$ say induced by removing the vertices in $\Gamma^{\mathrm{c}}$ from $\mathcal{G}_{n,d/n}$, then
$
  \sigma_1(\hat{A}_n^{\Gamma} - \E(\hat{A}_n))
  = 
  \bigOP{ \sqrt{d} }
$. 
We refer to \cite{feige2005spectral} for the exact statement.
The order of the largest singular values of $\hat{A}_n$ and $\hat{A}_n - \E(A_n)$, established in \eqref{eqn:Feige_and_Ofeks_singular_value_gap}, thus persists in the sparse regime when high-degree vertices are removed. 

In this paper we focus on a different type of model for community detection: the \gls{BMC} \cite{sanders2020clustering}. We aim to quantify a bound, similar to \eqref{eqn:Feige_and_Ofeks_singular_value_gap}, of the largest singular value of a centered random matrix built from a sample path of a \gls{BMC}.
Contrary to the \gls{SBM} in which the presence of one edge is independent of all other edges, \glspl{BMC} have a time-dimension and there are correlations between edges. 
Fortunately, taking inspiration from \cite{sanders2020clustering}, we can establish a bound on the largest singular value similar to the one in \eqref{eqn:Feige_and_Ofeks_singular_value_gap} by combining the spectral techniques in \cite{feige2005spectral} with concentration results for Markov chains \cite{paulin2015concentration}. 
This brings us one step closer to proving convergence of the spectrum to a limiting distribution as $n \to \infty$ \cite{marchenko1967distribution}.

\glsreset{BMC}
\subsection{\texorpdfstring{\glspl{BMC}}{BMCs}}
\label{sec:Definition__BMCs}

A \gls{BMC} has $n$ labeled states as opposed to vertices and $K$ clusters. By this we mean that the set of states $[n]= \{ 1, \ldots, n \}$ is partitioned so that $[n]= \cup_{k=1}^K \mathcal{V}_k$ with $\mathcal{V}_k \cap \mathcal{V}_l = \emptyset$ for all $k \neq l$. We let $\alpha = (\alpha_1, \ldots, \alpha_{K})$ with $\sum_{k=1}^K \alpha_k = 1$ be the \emph{cluster ratios} and let $p = (p_{kl})_{k,l \in [K]}$ with $\sum_{l=1}^K p_{kl} = 1$ for $k \in [K]$ be the \emph{cluster transition matrix}. We assume the following throughout this paper:

\begin{assumption}
  \label{ass:p_has_rank_K_nondegenerate}
  
  The cluster ratios $\alpha$ are strictly positive, i.e., $\min_{k \in [K]} \alpha_k > 0$.
  The cluster transition matrix $p$ is strictly positive and has full rank, i.e., $\min_{k,l \in [K]} p_{kl} > 0$ and $\mathrm{rank}(p) = K$. 
  The variables $\alpha, p, K$ are all independent of $n$.
\end{assumption}

Given $(\alpha, p)$ we construct a \gls{BMC} $\process{X_t}{t \geq 0}$ as follows. For $k = 2, \ldots, K$, assign $|\mathcal{V}_{k}| = \lfloor n \alpha_k \rfloor$ states to cluster $k$. Place all remaining states in cluster $1$ so that $|\mathcal{V}_1| = n - \sum_{k=2}^K |\mathcal{V}_k|$. Notice that $|\mathcal{V}_1| - \lfloor n \alpha_1 \rfloor \leq K-1$. 
The \gls{BMC} $\process{X_t}{t \geq 0}$ is a homogeneous Markov chain with transition matrix $P \in (0,1)^{n \times n}$ that satisfies element-wise
\begin{equation}
  P_{x,y}
  =
  \probability{
    X_{t+1} = y \vert X_t = x
  }
  =
  \frac{ p_{\sigma(x),\sigma(y)} }{ \cardinality{ \mathcal{V}_{\sigma(y)} } }
  \quad
  \textnormal{for}
  \quad
  x,y \in [n]
  .
  \label{eqn:Definition_of_P}
\end{equation}
Here, $\sigma: [n] \to [K]$ denotes the function that assigns to each state $x \in [n]$ its cluster $\sigma(x) \in [K]$. 
\refAssumption{ass:p_has_rank_K_nondegenerate} guarantees that $P$ is of rank $K$. The assumption that $p$ is strictly positive guarantees that the \gls{BMC} is irreducible and aperiodic. 
We can therefore let $\Pi \in (0,1)^n$ denote the stationary distribution of the \gls{BMC}, which satisfies $\Pi^{\mathrm{T}} = \Pi^{\mathrm{T}} P$. 
It should be noted that a \gls{BMC} is not necessarily reversible. 
The assumptions that $\alpha, p, K$ are independent of $n$ guarantee that the \gls{BMC} has a mixing time of $\Theta(1)$, as discussed in \refSection{sec:Properties_of_BMCs}. 
Distinct from the definition of a \gls{BMC} in \cite{sanders2020clustering}, we allow for self-jumps. We anticipate that the results of the current paper also hold for such variants if the appropriate modifications are made to the proofs.

Given a \gls{BMC} and some $T_n \in \naturalNumbersPlus$, a sample path $X_0, X_1, \ldots, X_{T_n}$ of length $T_n$ is obtained from $\process{X_t}{t \geq 0}$. Let $\hat{N} \in \naturalNumbersZero^{n \times n}$ denote the random matrix that records the number of transitions that occured between each pair of states within this sample path. Thus element-wise
\begin{equation}
  \hat{N}_{xy} 
  =
  \sum_{t=0}^{T_n-1} 
  \indicator{ X_t = x, X_{t+1} = y }
  \quad
  \textnormal{for}
  \quad
  x, y \in [n]
  .
  \label{eqn:Definition_Nhat}
\end{equation}
We let $N$ denote $\hat{N}$'s expectation conditional on $X_0 \eqcom{d}= \mathrm{Unif}( \Pi )$. Consequently $N = T_n \mathrm{Diag}(\Pi)P$.

\subsection{Spectral norm in \texorpdfstring{\glspl{BMC}}{BMCs}}
\label{sec:Introduction__BMCs}

Inspired by the derivation of \eqref{eqn:Feige_and_Ofeks_singular_value_gap} for \glspl{ERRG}, we may now wonder what the scalings are of the singular values of $\hat{N}$. By comparing the models, we may anticipate that the singular values of
$
  ( \hat{N} - N )
$
should be $\Theta_{\mathbb{P}}(\sqrt{ T_n / n })$. The necessary lower bound for this fact is our first result:

\begin{proposition}
  If $\omega(n) = T_n = o(n^2)$, then
  there exist constants $\mathfrak{b}, \mathfrak{e}_{\mathfrak{b}} > 0$ independent of $n$ and an integer $n_0 \in \naturalNumbersPlus$ 
  such that for all $n \geq n_0$,
  \begin{equation}
    \probabilityBig{
      \sigma_1(\hat{N} - N) 
      > 
      \mathfrak{b} \sqrt{\frac{T_n}{n}}
    } 
    \geq 
    1 - \e{-\mathfrak{e}_{\mathfrak{b}}\frac{T_n}{n}}.
  \end{equation}
  Thus in particular, $\sigma_1(\hat{N} - N)) = \Omega_{\mathbb{P}}(\sqrt{T_n/n})$.
  \label{prop:lower_bound_holds_positive_measure}
\end{proposition}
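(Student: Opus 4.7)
The plan is to obtain a \emph{deterministic} lower bound on $\sigma_1(\hat{N} - N)$ of the desired order, whereupon the exponential concentration statement holds trivially with probability $1$. The key elementary tool is
\begin{equation*}
\sigma_1(M) \geq \pnorm{M}{\mathrm{F}}/\sqrt{n}, \qquad M \in \real^{n \times n},
\end{equation*}
which follows from $\pnorm{M}{\mathrm{F}}^2 = \sum_{i} \sigma_i(M)^2 \leq n\, \sigma_1(M)^2$. It therefore suffices to establish $\pnorm{\hat{N} - N}{\mathrm{F}}^2 \geq c\, T_n$ deterministically for some constant $c > 0$ and all sufficiently large $n$.

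The key step is the pointwise inequality $\hat{N}_{xy}^2 \geq \hat{N}_{xy}$, valid because each $\hat{N}_{xy}$ is a non-negative integer. Summing and using $\sum_{x,y} \hat{N}_{xy} = T_n$ (the total number of transitions along the sample path) yields $\pnorm{\hat{N}}{\mathrm{F}}^2 \geq T_n$ deterministically. Expanding,
\begin{equation*}
\pnorm{\hat{N} - N}{\mathrm{F}}^2 = \pnorm{\hat{N}}{\mathrm{F}}^2 - 2 \langle \hat{N}, N \rangle + \pnorm{N}{\mathrm{F}}^2 \geq T_n - 2 \langle \hat{N}, N \rangle + \pnorm{N}{\mathrm{F}}^2,
\end{equation*}
where $\langle A, B\rangle = \sum_{x,y} A_{xy} B_{xy}$ is the entrywise inner product.

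The remaining two terms are negligible in the regime $T_n = o(n^2)$. By \refAssumption{ass:p_has_rank_K_nondegenerate} the cluster sizes satisfy $|\mathcal{V}_k| = \Theta(n)$, so there exists a constant $C > 0$, independent of $n$, with $\Pi_x \leq C/n$ and $P_{xy} \leq C/n$ uniformly in $x,y$. Hence $N_{xy} = T_n\, \Pi_x P_{xy} \leq C^2 T_n/n^2$ uniformly, giving
\begin{equation*}
\langle \hat{N}, N \rangle \leq \max_{x,y} N_{xy} \cdot \sum_{x,y} \hat{N}_{xy} = \bigO{T_n^2/n^2}, \qquad \pnorm{N}{\mathrm{F}}^2 \leq n^2 \max_{x,y} N_{xy}^2 = \bigO{T_n^2/n^2}.
\end{equation*}
Combining, $\pnorm{\hat{N} - N}{\mathrm{F}}^2 \geq T_n - \bigO{T_n^2/n^2} = T_n\bigl(1 - \bigO{T_n/n^2}\bigr)$. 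Since $T_n = o(n^2)$ the error term vanishes, so for all $n \geq n_0$ one has $\pnorm{\hat{N} - N}{\mathrm{F}}^2 \geq T_n/2$, and therefore $\sigma_1(\hat{N} - N) \geq \sqrt{T_n/(2n)}$. Taking $\mathfrak{b} = 1/\sqrt{2}$, the event in the proposition then occurs with probability $1$ for $n \geq n_0$, and in particular $\mathbb{P}[\,\cdot\,] \geq 1 - e^{-\mathfrak{e}_{\mathfrak{b}} T_n/n}$ for any $\mathfrak{e}_{\mathfrak{b}} > 0$.

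I do not anticipate a substantial obstacle: the argument is essentially a direct calculation once the Frobenius lower bound is identified. The only point requiring care is that the constants absorbed into $\bigO{\cdot}$ are uniform in $n$, which is ensured by \refAssumption{ass:p_has_rank_K_nondegenerate} fixing $\alpha, p, K$ independently of $n$. The hypothesis $T_n = \omega(n)$ plays no role in the deterministic inequality itself; it enters only to make $\sqrt{T_n/n} \to \infty$, so that the conclusion is asymptotically informative.
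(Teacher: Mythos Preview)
Your proof is correct and takes a genuinely different, more elementary route than the paper. The paper lower-bounds $\sigma_1(\hat{N}-N)$ by the $\ell^2$-norm of a \emph{single} row, $\sum_j |\hat{N}_{1j}-N_{1j}|^2$, and then needs probabilistic input: it conditions on the event that the row sum $\hat{N}_{1,[n]}$ is close to its mean (controlled via a Markov-chain concentration inequality), observes that when $T_n=o(n^2)$ each $N_{1,j}\leq 1/4$ so that $|\hat{N}_{1,j}-N_{1,j}|^2\geq \tfrac12\hat{N}_{1,j}^2+N_{1,j}^2$, and finally applies the arithmetic--quadratic--mean inequality to force the row to have enough mass given its integer support. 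This yields the stated probability bound $1-\e{-\mathfrak{e}_{\mathfrak b}T_n/n}$.

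Your argument bypasses all of this by passing to the full Frobenius norm via $\sigma_1\geq \pnorm{\cdot}{\mathrm F}/\sqrt{n}$ and exploiting integrality globally: $\hat{N}_{xy}^2\geq \hat{N}_{xy}$ summed over all pairs already gives $\pnorm{\hat N}{\mathrm F}^2\geq T_n$, and the cross term $\langle \hat N,N\rangle$ is $O(T_n^2/n^2)$ uniformly by \refLemma{lem:Asymptotic_upper_bound_on_Nxy}. The result is a \emph{deterministic} lower bound $\sigma_1(\hat N-N)\geq \sqrt{T_n/(2n)}$ for all large $n$, which is strictly stronger than what the proposition asks for. What the paper's approach buys is the observation that already a single row carries enough fluctuation; what your approach buys is simplicity, no concentration machinery, and a sharper conclusion. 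One cosmetic remark: since the proposition has a strict inequality, either note that $\pnorm{\hat N-N}{\mathrm F}^2\geq T_n(1-2\mathfrak{n}_2 T_n/n^2)>T_n/2$ is strict for large $n$, or simply take any $\mathfrak b<1/\sqrt{2}$.
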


Our second result is an order-wise matching upper bound to $\sigma_1(\hat{N} - N)$. Before we proceed, note that the asymptotic growth of $T_n$ determines the sparsity of $\hat{N}$. This will dictate the type of analysis that needs to be conducted. 
We will refer to the scenarios $T_n = \omega( n \ln{n} )$, $T_n = o(n \ln{n})$ and $T_n = \Theta(n\ln{n})$ as the dense, sparse, and critical regime \cite{sanders2020clustering}, similar to the terminology for \glspl{ERRG} discussed in the introduction \ref{sec:Introduction}.

Our analysis requires that we remove states that are visited unusually often in the \gls{BMC} in the sparse regime, similar to \cite{feige2005spectral}.
We therefore consider \emph{trimmed matrices}. For any subset $\Gamma \subseteq [n]$, possibly random, let $\hat{N}_\Gamma$ be the random matrix that remains after setting all entries on the rows and columns of $\hat{N}$ corresponding to states not in $\Gamma$ to zero. Thus element-wise
\begin{equation}
  ( \hat{N}_\Gamma )_{x,y}
  \triangleq
  \begin{cases}
    \hat{N}_{x,y} & \textnormal{if } x, y \in \Gamma \\
    0 & \textnormal{otherwise}. \\
  \end{cases}
  \label{eqn:def_hat_N_Gamma}
\end{equation}
Denoting $\hat{N}_{[n],y} = \sum_{x \in [n]} \hat{N}_{x,y}$, the following was proven for such trimmed matrices \cite[Prop.~7]{sanders2020clustering}:
\emph{If $T_n = \omega(n)$ and $\Gamma^{\mathrm{c}}$ is a set of size $\lfloor n \e{ -(T_n/n) \ln{ (T_n/n) } } \rfloor$ containing the states with highest number of visits, i.e., with the property that
$
  \min_{y \in \Gamma^{\mathrm{c}}} \hat{N}_{[n],y} 
  \geq
  \max_{y \in \Gamma} \hat{N}_{[n],y}
  ,
$
then
$
  \sigma_1 \bigl( \hat{N}_\Gamma - N \bigr)
  = 
  \bigOP{ \sqrt{ (T_n/n) \ln{ ( T_n/n ) } } }.
$
}

In this paper we prove firstly that the bound in \cite[Prop.~7]{sanders2020clustering} holds without trimming in the dense regime $T_n = \Omega(n \ln{n})$. Secondly, we sharpen the bound in both the dense and sparse regime by obtaining a bound without the factor $\ln(T_n/n)$ that then matches the lower bound asymptotically, thereby proving that it is asymptotically optimal.

\begin{theorem}
  \label{thm:Singular_value_gap}
  
   Presume \refAssumption{ass:p_has_rank_K_nondegenerate}. The following holds:

  \begin{enumerate}[label=(\alph*)]
    \item
      \label{itm:Singular_value_gap_without_trimming}
      If $T_n = \Omega( n \ln{n} )$, then 
      \begin{equation}
        \sigma_1( \hat{N} - N ) 
        = 
        \bigOP{ \sqrt{ {T_n} / {n} } }
        .
      \end{equation}
   
    \item
      \label{itm:Singular_value_gap_with_trimming}
      If $T_n = \omega(n)$ and $\Gamma^{\mathrm{c}}$ is a set of size $\lfloor n \e{-T_n/n} \rfloor$ containing the states with highest number of visits, i.e., with the property that
      $
        \min_{y \in \Gamma^{\mathrm{c}}} \hat{N}_{[n],y} 
        \geq
        \max_{y \in \Gamma} \hat{N}_{[n],y}
        ,
      $
      then
      \begin{equation}
      \sigma_1( \hat{N}_\Gamma - N )
        = 
        \bigOP{ \sqrt{ {T_n} / {n} } }
        .
      \end{equation}
  \end{enumerate}
\end{theorem}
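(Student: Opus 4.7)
The plan is to adapt the Feige--Ofek strategy used for $\hat{A}_n - \expectation{\hat{A}_n}$ on $\mathcal{G}_{n,q_n}$ to the BMC setting, replacing the independent-edges assumption with concentration inequalities for additive functionals of Markov chains à la Paulin \cite{paulin2015concentration}. First, write
\[
  \sigma_1( \hat{N} - N )
  =
  \sup_{u,v \in S^{n-1}} u^{\mathrm{T}} ( \hat{N} - N ) v
  =
  \sup_{u,v \in S^{n-1}} \sum_{x,y \in [n]} u_x v_y ( \hat{N}_{x,y} - N_{x,y} ),
\]
and replace the sphere with a deterministic net $\mathcal{T} \subset \mathbb{R}^n$ of integer-scaled vectors of size $|\mathcal{T}| \leq C^n$, at the cost of a constant factor. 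For case \ref{itm:Singular_value_gap_with_trimming} the same rewrite is applied to $\hat{N}_\Gamma - N$ and the sum is restricted to $x,y \in \Gamma$.

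Second, for each $(u,v) \in \mathcal{T}^2$ split the double sum by magnitude into a \emph{light part} $L(u,v)$ collecting pairs with $\abs{u_x v_y} \leq \tau$ and a \emph{heavy part} $H(u,v)$ collecting the rest, where $\tau \asymp n^{-1}\sqrt{T_n/n}$ is calibrated so that $L$ can be controlled by concentration and $H$ by a counting argument. The light part is a weighted sum of the bounded indicators $\indicator{X_t = x, X_{t+1} = y}$ evaluated along the BMC path, and by \refAssumption{ass:p_has_rank_K_nondegenerate} the mixing time of $\process{X_t}{t \geq 0}$ is $\Theta(1)$ (established in \refSection{sec:Properties_of_BMCs}). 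Consequently, a Bernstein-type concentration inequality for Markov chains delivers sub-Gaussian tails on $L(u,v) - \E[L(u,v)]$ at scale $\sqrt{T_n/n}$ per coordinate, with residual expectation absorbed using $\sum_{x,y} N_{x,y} = T_n$ and the centering of $\hat{N} - N$. A union bound over $\mathcal{T}^2$ costs only $C^{2n}$, which is dominated by the sub-Gaussian factor $\e{-c T_n/n}$ coming out to be $\e{-c' n}$ when $T_n/n$ is large enough; the condition $T_n = \Omega(n \ln{n})$ in \ref{itm:Singular_value_gap_without_trimming} is exactly what makes this bound close, while in the sparse case \ref{itm:Singular_value_gap_with_trimming} the net itself is restricted to $\Gamma$, so the relevant union bound is of cardinality $C^{|\Gamma|} \leq C^n$ and the same mechanism works.

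Third, and this is where the hardest work lies, the heavy part $H(u,v)$ must be controlled combinatorially. The Feige--Ofek scheme reduces $|H(u,v)|$ to a ``discrepancy'' estimate that bounds, for all $A, B \subseteq [n]$, the number $e(A,B)$ of transitions from $A$ to $B$ in terms of $|A|$, $|B|$, and $T_n / n$. This step relies on (i) bounded row/column sums, and (ii) the property that for atypical pairs of large sets, the deviation $e(A,B) - \E[e(A,B)]$ is controlled by the log-volume of the pair. For \ref{itm:Singular_value_gap_without_trimming}, one proves that when $T_n = \Omega(n \ln{n})$ the column sums $\hat{N}_{[n],y}$ are uniformly $\bigOP{T_n/n}$ via a Markov chain Bernstein bound on each state, union bounded over $[n]$; this yields the bounded-degree property outright. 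For \ref{itm:Singular_value_gap_with_trimming}, in the sparser regime $T_n = \omega(n)$, some $\hat{N}_{[n],y}$ will be much larger than $T_n/n$, and one must exploit that the trimming set $\Gamma^{\mathrm{c}}$ of size $\lfloor n \e{-T_n/n}\rfloor$ is chosen to contain precisely the offending states, so that $\max_{y \in \Gamma} \hat{N}_{[n],y} = \bigOP{T_n/n}$. The main obstacle is transferring the discrepancy inequality of Feige--Ofek from the independent-edge \gls{ERRG} model to the time-correlated \gls{BMC} trajectory; this is accomplished by writing $e(A,B)$ as an additive functional $\sum_{t=0}^{T_n - 1} \indicator{X_t \in A, X_{t+1} \in B}$ and invoking a Markov chain Bernstein bound with the $\Theta(1)$ mixing time to get concentration at the necessary scale, after which the combinatorial counting over dyadic levels of $|A|, |B|$ proceeds as in \cite{feige2005spectral}. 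Combining the light and heavy bounds yields $\sigma_1(\hat{N} - N), \sigma_1(\hat{N}_\Gamma - N) = \bigOP{\sqrt{T_n/n}}$ in the respective regimes.
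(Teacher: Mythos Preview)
Your outline follows the paper's approach closely---$\epsilon$-net, light/heavy split at threshold $(1/n)\sqrt{T_n/n}$, Paulin-type concentration for the light part, bounded degrees plus discrepancy for the heavy part---but two points in the mechanism are mislocated.

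First, the light-pairs tail is \emph{not} $\e{-cT_n/n}$. Writing $L(u,v)$ as $\sum_{t=0}^{T_n-1} f(X_t,X_{t+1})$ with $|f| \leq (1/n)\sqrt{T_n/n}$ (light-pair cutoff) and $\varianceWrt{f}{\Pi} \leq \max_{x,y} \Pi_x P_{x,y} \cdot \pnorm{u}{2}^2 \pnorm{v}{2}^2 = O(1/n^2)$, Paulin's Bernstein inequality gives a tail of the form $\exp(-c\,n)$, not $\exp(-c\,T_n/n)$. This is what absorbs the $(9/\epsilon)^{2n}$ net union bound, and it does so in \emph{every} regime $T_n = \omega(n)$; the condition $T_n = \Omega(n\ln n)$ plays no role here. (Note also that your arithmetic does not close: $\e{-cT_n/n}$ with $T_n = \Omega(n\ln n)$ is only polynomially small in $n$, which cannot beat $C^{2n}$.) The place where $T_n = \Omega(n\ln n)$ is actually needed is the bounded-degree step for part~\ref{itm:Singular_value_gap_without_trimming}: each single column sum $\hat{N}_{[n],y}$ concentrates with tail $\e{-cT_n/n}$, and the union over $n$ states forces $T_n/n \gtrsim \ln n$. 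This is exactly what trimming circumvents in part~\ref{itm:Singular_value_gap_with_trimming}.

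Second, in part~\ref{itm:Singular_value_gap_with_trimming} the set $\Gamma$ is \emph{random} (it depends on the sample path), so ``restricting the net to $\Gamma$'' does not yield a fixed functional to which Paulin's inequality applies. The paper resolves this by proving the light-pairs bound uniformly over \emph{all} deterministic subsets $\mathcal{A} \subseteq [n]$ of size $|\Gamma| = n - \lfloor n\e{-T_n/n}\rfloor$ and then using $\Gamma \in \{\mathcal{A}: |\mathcal{A}|=|\Gamma|\}$; the extra factor $2^n$ from this outer union is again absorbed by the $\e{-cn}$ tail. Your sketch does not account for this step.
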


\refProposition{prop:lower_bound_holds_positive_measure} together with \refTheorem{thm:Singular_value_gap} yields $\sigma_{1}(\hat{N} - N) = \Theta_{\mathbb{P}}(\sqrt{T_n/n})$.
As a corollary to \refTheorem{thm:Singular_value_gap} we also obtain asymptotic scalings and bounds on the singular values of $\hat{N}_\Gamma$:

\begin{corollary}
  \label{cor:spectral_gap}
  Presume \refAssumption{ass:p_has_rank_K_nondegenerate}. If $T_n = \omega(n)$, then
  \begin{equation}
    \sigma_i(\hat{N}_{\Gamma}) 
    = 
    \begin{cases}
      \Theta_{\mathbb{P}}(T_n/n) & \textnormal{if } i \in [K], \\
      O_{\mathbb{P}}(\sqrt{T_n/n}) & \textnormal{otherwise}. \\
    \end{cases}.
  \end{equation}
\end{corollary}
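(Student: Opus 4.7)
The plan is to combine \refTheorem{thm:Singular_value_gap}\ref{itm:Singular_value_gap_with_trimming} with Weyl's inequality for singular values and a direct determination of the singular values of the expectation matrix $N$. First I would pin down $\sigma_i(N)$ exactly. Let $Z \in \{0,1\}^{n\times K}$ denote the cluster membership matrix with $Z_{x,k} = \indicator{\sigma(x)=k}$ and $D = \mathrm{Diag}(|\mathcal{V}_1|,\ldots,|\mathcal{V}_K|)$. Then $Z^{\mathrm{T}} Z = D$, so $\tilde Z := Z D^{-1/2}$ has orthonormal columns. From \refEquation{eqn:Definition_of_P} one has the factorization $P = Z p Z^{\mathrm{T}} D^{-1}$, and a short check shows that the stationary distribution of the BMC is $\Pi_x = \pi_{\sigma(x)}/|\mathcal{V}_{\sigma(x)}|$, where $\pi$ is the stationary distribution of the cluster transition matrix $p$. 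Consequently
\[
  N = T_n \mathrm{Diag}(\Pi) P = T_n\, \tilde Z \bigl( D^{-1/2} \mathrm{Diag}(\pi)\, p\, D^{-1/2} \bigr) \tilde Z^{\mathrm{T}}.
\]
Since $\tilde Z$ has orthonormal columns, the nonzero singular values of $N$ coincide with $T_n$ times those of the $K\times K$ matrix $M_n := D^{-1/2} \mathrm{Diag}(\pi)\, p\, D^{-1/2}$. By \refAssumption{ass:p_has_rank_K_nondegenerate}, $|\mathcal{V}_k| = \alpha_k n(1 + O(1/n))$ with $\alpha_k, \pi_k > 0$, so $n M_n \to \mathrm{Diag}(\alpha)^{-1/2} \mathrm{Diag}(\pi)\, p\, \mathrm{Diag}(\alpha)^{-1/2}$, an invertible $K\times K$ matrix independent of $n$. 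Hence $\sigma_i(N) = \Theta(T_n/n)$ for $i\in[K]$ and $\sigma_i(N) = 0$ for $i > K$.

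Next I would transfer this information to $\hat N_\Gamma$ via Weyl's inequality for singular values: $|\sigma_i(\hat N_\Gamma) - \sigma_i(N)| \leq \sigma_1(\hat N_\Gamma - N)$ for every $i$. By \refTheorem{thm:Singular_value_gap}\ref{itm:Singular_value_gap_with_trimming}, the right-hand side is $\bigOP{\sqrt{T_n/n}}$ in the present regime $T_n = \omega(n)$. Since $T_n = \omega(n)$ forces $\sqrt{T_n/n} = o(T_n/n)$, for $i \in [K]$ the deterministic signal of order $T_n/n$ dominates the random perturbation, yielding $\sigma_i(\hat N_\Gamma) = \Theta_{\mathbb{P}}(T_n/n)$; for $i > K$ only the perturbation survives, giving $\sigma_i(\hat N_\Gamma) = \bigOP{\sqrt{T_n/n}}$. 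Together, this is precisely the corollary.

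The only nontrivial step---and thus the main obstacle---is the first one, namely showing that the $K$ nonzero singular values of $N$ are all of exact order $T_n/n$ rather than decaying faster. This is where the full strength of \refAssumption{ass:p_has_rank_K_nondegenerate} is used, specifically $\min_k \alpha_k > 0$, $\min_k \pi_k > 0$, and $\mathrm{rank}(p) = K$, each of which is required to make the limiting $K\times K$ matrix $\mathrm{Diag}(\alpha)^{-1/2} \mathrm{Diag}(\pi)\, p\, \mathrm{Diag}(\alpha)^{-1/2}$ invertible and independent of $n$. Once this is established, the conclusion is a one-line consequence of Weyl's inequality and \refTheorem{thm:Singular_value_gap}\ref{itm:Singular_value_gap_with_trimming}.
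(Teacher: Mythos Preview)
Your proposal is correct and follows essentially the same route as the paper's proof in \refSection{sec:proof_corollay_spectral_gap}: establish $\sigma_i(N) = \Theta(T_n/n)$ for $i\in[K]$ and $\sigma_i(N)=0$ otherwise (the paper records this as \refEquation{eqn:spectrum_N}), then apply Weyl's inequality together with \refTheorem{thm:Singular_value_gap}\ref{itm:Singular_value_gap_with_trimming}. One minor slip: with $D\in\realNumbers^{K\times K}$ the expression $P = Z p Z^{\mathrm{T}} D^{-1}$ is dimensionally inconsistent---it should read $P = Z p D^{-1} Z^{\mathrm{T}}$---but your final factorization $N = T_n\,\tilde Z\bigl(D^{-1/2}\mathrm{Diag}(\pi)\,p\,D^{-1/2}\bigr)\tilde Z^{\mathrm{T}}$ is correct and the argument goes through unchanged.
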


The proof of \refTheorem{thm:Singular_value_gap} follows a similar strategy as the proof of \cite[Prop. 7]{sanders2020clustering}. In turn, \cite{sanders2020clustering}'s proof was based on \cite{feige2005spectral} which  itself took inspiration from \cite{friedman1989second}. 
The idea is to upper bound spectral norms of random matrices using an $\epsilon$-net argument and then separate into contributions of so-called light and heavy pairs. 
While the application of this technique to random graphs and \glspl{SBM} has become common \cite{lei2015consistency,benaych2017largest,le2017concentration}, the distinct difficulty with \glspl{BMC} is that $\hat{N}$ has dependent entries. Fortunately, the fast mixing time of the \gls{BMC} can be exploited: using techniques from \cite{paulin2015concentration} we can obtain concentration inequalities that are sufficiently strong to argue that the dependencies are negligible asymptotically. We also briefly validate our results numerically in \refSection{sec:numerical_validation}.

\subsection{Related literature}
\label{sec:related_literature}

Investigation of community detection problems within the context of \glspl{SBM} has seen great progress. 
In the sparse regime, necessary and sufficient conditions for extraction of clusters that are positively correlated with the true clusters have been obtained \cite{decelle_inference_2011,massoulie_community_2014,mossel_reconstruction_2015}. 
In the dense regime, conditions under which the proportion of misclassified vertices can tend to zero, or even asymptotic exact recovery can be achieved, have been established \cite{abbe_community_2015,abbe_recovering_2015,jog_information-theoretic_2015,mossel_consistency_2015,yun_community_2014,yun_accurate_2014,yun_optimal_2016,hajek_achieving_2016,abbe_exact_2016}. For an overview of e.g.\ algorithms that are available, we refer to \cite{gao2017achieving}.

Community detection problems for \glspl{BMC} have thus far received less attention. In the sparse regime, an information-theoretical lower bound on the detection error rate satisfied under any clustering algorithm was derived in \cite{sanders2020clustering} together with a two-stage clustering algorithm that can accurately recover the cluster structure.
In the dense regime, learning of low-rank structures in Markov chains from trajectories are studied in 
\cite{zhang_spectral_2018}, where spectral methods are used to recover a low-rank approximation of the Markov chain's transition matrix; 
\cite{zhu2019learning}, where a maximum likelihood estimation method was used; 
and \cite{duan2018state}, where an algorithm is analyzed that relies on a spectral decomposition followed by an approximation of the convex hull of singular vectors.
Noteworthy too are \cite{du2019mode}, which describes a method to recover a latent transition model from observations of a dynamical system switched by a Markov chain with low-rank structure; and \cite[\S{5}]{zhou2020optimal}, where the problem is related to estimating low tensor-train rank structure from noisy high-order tensor observations.

Spectra of random matrices have been extensively studied. 
Most results hold for random matrices with independent or weakly dependent entries, see e.g.\ \cite{wigner_distribution_1958,tao2012topics,tropp_introduction_2015,hochstattler_semicircle_2016,kirsch_sixty_2016,kirsch_semicircle_2017}. Results on the spectra of adjacency matrices of e.g.\ \glspl{SBM} or \glspl{ERRG} also make use independence assumptions \cite{feige2005spectral,avrachenkov2015spectral}. 
There are further intriguing results on the spectra of random Markov chains \cite{bordenave_spectrum_2010,bordenave_spectrum_2011,bordenave2012circular}. 
The current paper is however about singular values that come from a random frequency matrix obtained from a single sample path of a nonrandom Markov chain that has an underlying block structure. The sample path can moreover be relatively short compared to the size of the system. 

We sharpen spectral norm bounds in \cite{sanders2020clustering} and quantify an asymptotic gap between the largest and smallest singular values. The proof method builds on the techniques in \cite{feige2005spectral,keshavan2010matrix,lei2015consistency} by incorporating concentration inequalities for Markov chains \cite{paulin2015concentration} and relying on a perturbative argument using Weyl's inequality \cite{horn1994topics}.
As proposed in \cite{sanders2020clustering}, our analysis also requires regularization of the random frequency matrix in the sparse regime. We zero out the entries of the frequency matrix that correspond to a fixed-size subset of most-visited states, but there exist also other regularization techniques for random graphs. For example, one may restrict to vertices with degree less than $(1 + \epsilon) T_n/n$ \cite{feige2005spectral} for some $\epsilon > 0$ independent of $n$, or change the weights of edges incident to vertices of high degree \cite{le2017concentration,le2018concentration}.

\paragraph{Structure.} In \refSection{sec:Properties_of_BMCs} we describe the main properties of \glspl{BMC}. In \refSection{sec:norm_of_BMC_matrices_and_applications} we prove \refTheorem{thm:Singular_value_gap} and its main steps. In \refSection{sec:proof_corollay_spectral_gap}  we prove \refCorollary{cor:spectral_gap} on the singular values of $\hat{N}$ and $\hat{N}_{\Gamma}$ and in \refSection{sec:lower_bound_spectral_norm} we prove the lower bound given in  \refProposition{prop:lower_bound_holds_positive_measure}. Finally, we simulate \glspl{BMC} and numerically validate the statement of \refTheorem{thm:Singular_value_gap} in \refSection{sec:numerical_validation}.

\section{Properties of \texorpdfstring{\glspl{BMC}}{BMCs}}
\label{sec:Properties_of_BMCs}

In this section we cover properties of \glspl{BMC} that will be exploited to prove \refTheorem{thm:Singular_value_gap}. First, we introduce some generic notation.

\paragraph{Notation.}
Let $\mathbb{B}_r^n(x) \subseteq \R^n$ be the $n$-dimensional ball of radius $r$ centered around $x \in \R^n$. Similarly, let $\mathbb{S}_r^{n-1}(x) \subseteq \R^n$ be the $(n-1)$-dimensional sphere with radius $r$ centered around $x \in \R^n$. For any pair of subsets $(\mathcal{A}, \mathcal{B}) \subseteq [n]^2$, we also introduce the short-hand notation $A_{\mathcal{A},\mathcal{B}} \triangleq \sum_{x \in \mathcal{A}} \sum_{y \in \mathcal{B}} A_{x,y}$.
Recall that for any two sequences of random variables $X_1, X_2, \ldots$ and $Y_1, Y_2, \ldots$, we denote $X_n = \bigOP{Y_n}$ if and only if for any $\epsilon > 0$ there exist $C_\epsilon, n_{\epsilon} > 0$ such that $\probability{ |X_n / Y_n| > C_\epsilon } \leq \epsilon$ for any $n > n_{\epsilon}$. We write $X_n = \omega_{\mathbb{P}}(Y_n)$ if and only if for any $\epsilon > 0$ and any $C > 0$, there exist $n_{\epsilon, C} > 0$ such that $\probability{ |X_n/Y_n| < C } \leq \epsilon$ for any $n > n_{\epsilon, C}$.
For any two deterministic sequences $a_1, a_2, \ldots$ and $b_1, b_2, \ldots$, we denote $a_n = \Theta(b_n)$ if $a_n = O(b_n)$ and $b_n = O(a_n)$; also $a_n = \omega(b_n)$ if $b_n = o(a_n)$. We denote $a_n \sim b_n$ if as $n \to \infty$, we have $|a_n/b_n| \to 1$. 
For $a,b \in \R$, we denote $a \vee b = \max{\{ a,b \}}$.

\subsection{Asymptotic properties of \texorpdfstring{$N$}{N}}

Let $\pi = (\pi_1, \ldots, \pi_{K}) \in \R^{K}$ denote the unique stationary distribution of $p$, which thus satisfies $\pi^T = p \pi^{T}$. 
By the Perron--Frobenius theorem, $\min_{k \in [K]} \pi_k = \pi_{\min} > 0$ due to the positivity assumption for $p$ in \refAssumption{ass:p_has_rank_K_nondegenerate}. Moreover we have that $\sigma_{K}(p) = \sigma_{\min}(p) > 0$ due to the assumption $\mathrm{rank}(p) = K$. These properties of positivity have immediate implications on the asymptotic scaling of the entries of $N$. In particular, we prove the following in \refAppendixSection{sec:Proof_of_Asymptotic_upper_bound_on_Nxy}:

\begin{lemma}
  \label{lem:Asymptotic_upper_bound_on_Nxy}

  There exist constants $0 < \mathfrak{n}_1 < \mathfrak{n}_2 < \infty$, $0 < \mathfrak{p}_1 < \mathfrak{p}_2 < \infty$ independent of $n$ and an integer $m \in \naturalNumbersPlus$ such that for all $n \geq m$ and all $x, y \in [n]$,
  $
    \mathfrak{n_1} {T_n} / {n^2}
    \leq
    N_{x,y}
    \leq 
    \mathfrak{n_2} {T_n} / {n^2}
  $
  and
  $
    \mathfrak{p}_1/n 
    \leq 
    P_{x,y} 
    \leq 
    \mathfrak{p}_2/n
  $.
\end{lemma}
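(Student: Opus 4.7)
The strategy is to first identify the stationary distribution $\Pi$ in closed form, then use \refAssumption{ass:p_has_rank_K_nondegenerate} together with the explicit cluster-size formula to bound each factor in $N_{x,y} = T_n \Pi_x P_{x,y}$ uniformly.

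\emph{Step 1: closed form for $\Pi$.} I will show that $\Pi_x = \pi_{\sigma(x)} / |\mathcal{V}_{\sigma(x)}|$ by verifying $\Pi^{\mathrm{T}} P = \Pi^{\mathrm{T}}$. Indeed, using the definition \eqref{eqn:Definition_of_P},
\begin{equation*}
  \sum_{x \in [n]} \Pi_x P_{x,y}
  = \sum_{k=1}^{K} \sum_{x \in \mathcal{V}_k} \frac{\pi_k}{|\mathcal{V}_k|} \cdot \frac{p_{k,\sigma(y)}}{|\mathcal{V}_{\sigma(y)}|}
  = \frac{1}{|\mathcal{V}_{\sigma(y)}|} \sum_{k=1}^{K} \pi_k p_{k,\sigma(y)}
  = \frac{\pi_{\sigma(y)}}{|\mathcal{V}_{\sigma(y)}|},
\end{equation*}
where the last equality uses $\pi^{\mathrm{T}} p = \pi^{\mathrm{T}}$. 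Positivity of $\Pi$ and $\sum_x \Pi_x = \sum_k \pi_k = 1$ give uniqueness.

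\emph{Step 2: uniform bounds on cluster sizes.} Since $\alpha_{\min} \triangleq \min_{k} \alpha_k > 0$ is independent of $n$, for $k \geq 2$ we have $n \alpha_k - 1 \leq |\mathcal{V}_k| \leq n \alpha_k$, and the remark after \refAssumption{ass:p_has_rank_K_nondegenerate} shows $|\mathcal{V}_1| = \lfloor n \alpha_1 \rfloor + O(K)$. Hence there exists an integer $m \in \naturalNumbersPlus$ (depending only on $\alpha$ and $K$) such that for all $n \geq m$ and all $k \in [K]$,
\begin{equation*}
  \tfrac{1}{2} \alpha_{\min} \, n
  \;\leq\;
  |\mathcal{V}_k|
  \;\leq\;
  n.
\end{equation*}

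\emph{Step 3: combine.} Using Step 2 and the bounds $0 < p_{\min} \triangleq \min_{k,l} p_{kl} \leq \max_{k,l} p_{kl} \leq 1$ guaranteed by \refAssumption{ass:p_has_rank_K_nondegenerate}, I will read off from \eqref{eqn:Definition_of_P} that
$
  p_{\min}/n \leq P_{x,y} \leq 2/(\alpha_{\min} n)
$
for all $x, y \in [n]$ and $n \geq m$, which yields the claimed $\mathfrak{p}_1, \mathfrak{p}_2$. The Perron--Frobenius theorem applied to the strictly positive matrix $p$ supplies $\pi_{\min} > 0$ independent of $n$, so by Step 1, $\Pi_x \in [\pi_{\min}/n,\, 2/(\alpha_{\min} n)]$ uniformly. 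Substituting into $N_{x,y} = T_n \Pi_x P_{x,y}$ then produces constants $\mathfrak{n}_1, \mathfrak{n}_2$ with $\mathfrak{n}_1 T_n/n^2 \leq N_{x,y} \leq \mathfrak{n}_2 T_n/n^2$, completing the proof.

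\emph{Main obstacle.} There is no substantive obstacle: the statement is a direct consequence of the block structure of $P$ and the $n$-independence hypotheses on $(\alpha, p, K)$. The only mild care needed is in handling cluster $\mathcal{V}_1$, whose size is defined by the residual formula $|\mathcal{V}_1| = n - \sum_{k \geq 2} |\mathcal{V}_k|$ rather than by a floor function, but this costs only an additive $O(K)$ and is absorbed by choosing $n \geq m$ large enough.
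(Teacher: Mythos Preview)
Your proposal is correct and follows essentially the same approach as the paper: both use the closed form $\Pi_x = \pi_{\sigma(x)}/|\mathcal{V}_{\sigma(x)}|$, the block formula \eqref{eqn:Definition_of_P} for $P$, the fact $|\mathcal{V}_k| \sim n\alpha_k$, and Perron--Frobenius to ensure $\pi_{\min} > 0$. If anything, your argument is slightly more explicit than the paper's, which phrases the same computation via the limit $\lim_{n\to\infty} N_{x,y}/(T_n/n^2) = \pi_{\sigma(x)} p_{\sigma(x),\sigma(y)}/(\alpha_{\sigma(x)}\alpha_{\sigma(y)})$ and then invokes strict inequalities on $\mathfrak{n}_1,\mathfrak{n}_2$.
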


\refLemma{lem:Asymptotic_upper_bound_on_Nxy} combined with the block structure of $P$ in turn has implications on the asymptotic scalings of the singular values of $N$. 
Observe first from the block structure of $P$ defined in \eqref{eqn:Definition_of_P} derived from $p$ and from the assumption $\mathrm{rank}(p) = K$, that $P$ has $K$ non-zero and $n - K$ zero singular values. In particular, for $i \in [n]$,
\begin{equation}
  \sigma_i(P) 
  = 
  \begin{cases}
    \sigma_i(p) + o(1) = \Theta(1) & \quad \textnormal{if } i \in [K] \\
    0 & \quad \textnormal{otherwise}. \\
  \end{cases}
\end{equation}
Furthermore, the unique stationary distribution $\Pi$ of $P$ is given by
\begin{equation}
  \Pi 
  = 
  \Bigl( 
    \frac{\pi_1}{|\mathcal{V}_1|} u_{1}, \ldots, \frac{\pi_K}{|\mathcal{V}_{K}|} u_{K} 
  \Bigr)
  \in (0,1)^n
  \label{eqn:Definition_of_Pi} 
\end{equation}
where for $k \in [K]$, $u_k = (1, \ldots, 1) \in (0,1)^{|\mathcal{V}_k|}$ is the all-one vector of its respective dimension. Observe now that \refAssumption{ass:p_has_rank_K_nondegenerate} together with the fact that for $i \in [n]$, $|\mathcal{V}_i| \sim n \alpha_i$ implies that for $i \in [n]$, $\Pi_{i} = \Theta(1/n)$.
Since $N = T_n \mathrm{Diag}(\Pi) P$, we can conclude that the singular values of $N$ satisfy
\begin{equation}
  \sigma_i(N)
  = 
  \begin{cases}
    \Theta(T_n/n) & \textnormal{if } i \in [K], \\
    0 & \textnormal{otherwise}.
  \end{cases}
  \label{eqn:spectrum_N}
\end{equation}

The following is an example of the spectrum of $N$ for a given $p$ and $\alpha$:

\begin{example}
  Let $0 < a, b < 1$ such that $0 < a+b < 1$ and $a\neq 1/3, b \neq 1/3$. Suppose that
  \begin{equation}
    p 
    =
    \begin{pmatrix}
      a & b & 1 - a - b \\
      b & 1 - a - b & a \\
      1 - a - b & a & b \\
    \end{pmatrix}
    \quad
    \textnormal{and}
    \quad
    \alpha
    =
    \begin{pmatrix}
      1/3 \\
      1/3 \\
      1/3 \\
    \end{pmatrix}
    .
  \end{equation}
  In this symmetric case $p$ has full rank and 
  \begin{equation}
    \sigma_1(N)
    = 
    \frac{T_n}{3n}
    ,
    \quad
    \textnormal{and}
    \quad
    \sigma_2(N)
    = \sigma_3(N)
    = 
    \frac{T_n}{3n} \sqrt{ 1 + 3 ( a^2 - a + ab - b + b^2 ) }
    .
  \end{equation}
\end{example}

\subsection{A mixing time of \texorpdfstring{$\Theta(1)$}{order n}}

For $\varepsilon \in [0,1)$, the \emph{$\varepsilon$-mixing time of a Markov chain} can be defined as
\begin{equation}
  t_{\mathrm{mix}}(\varepsilon) 
  = 
  \min \{ t \geq 0 : d(t) \geq \varepsilon \}
  .
\end{equation}
Here
\begin{equation}
  d(t) 
  = 
  \sup_{x \in [n]} 
  d_{\mathrm{TV}}
  \bigl( 
    \probability{ X_t = \cdot \vert X_0 = x }
    ,
    \Pi 
  \bigr)
  \quad
  \textnormal{and}
  \quad
  d_{\mathrm{TV}}(\mu, \nu)
  = 
  \tfrac{1}{2} \sum_{x \in [n]} | \mu_x - \nu_x |
  .
  \label{eqn:Definition_d_t}
\end{equation}
The mixing time of a \gls{BMC} is relatively short and in fact $\Theta(1)$. This can be credited to the facts that the block structure is independent of $n$ and that the graph of a \gls{BMC} is a complete graph \cite[Prop.~2]{sanders2020clustering}:
\emph{For any \gls{BMC} with $n \geq 4 / \alpha_{\min}$ let $\eta > 0$ be such that $1 < \max_{a,b,c} \{ p_{b,a} / p_{c,a}, \allowbreak p_{a,b} / p_{a,c} \} \leq \eta$. We then have $t_{\mathrm{mix}}(\varepsilon) \leq - c_{\mathrm{mix}} = -1/\ln{(1-1/2\eta)}$.} 

Note that the assumption $\eta > 1$ in \cite[Prop.~2]{sanders2020clustering} follows from  \refAssumption{ass:p_has_rank_K_nondegenerate}(i).  Indeed, since $\mathrm{rank}(p) > 1$ there exists at least one $a \in [K]$ such that for some $c \neq b$ we have $p_{a,b}/ p_{a,c} > 1$. Finally, positivity of $p$ allows us to find a finite $\eta$. 

By relating now the relatively short mixing time to the \emph{pseudo spectral gap}, we can prove sharp concentration inequalities for different quantities pertaining to the \gls{BMC} using \cite[Thm.~3.4]{paulin2015concentration}:
\emph{Let $X_0, X_1, \ldots, X_{T_n-1}$ be a stationary Markov chain with pseudo spectral gap $\gamma_{\mathrm{ps}}$. Let $f \in L^{2}(\Pi)$, with $|f(x) - \E_{\Pi}(f)| \leq C$ for every $x \in \Omega$. Let $V_f = \mathrm{Var}_{\Pi}(f)$. Then, for any $z > 0$,}
\begin{equation}
  \probabilityBig{ 
    \Bigl| \sum_{t=0}^{T_n-1} f(X_{t}) - \expectationBigWrt{f(X_t)}{\Pi} \Bigr| 
    \geq 
    z
  } 
  \leq 
  2 
  \exp{ \Bigl( - \frac{ z^2 \gamma_{\mathrm{ps}} }{ 8( T_n + 1/\gamma_{\mathrm{ps}} ) V_f + 20zC } \Bigr) }
  .
  \label{eqn:Paulins_concentration_inequality}
\end{equation}
Specifically, the mixing time of a Markov chain can be related to the pseudo spectral gap
\begin{equation}
  \gamma_{\mathrm{ps}} 
  = \max_{i \geq 1} \frac{ 1 - \lambda( (P^*)^i P^i ) }{i}
  \quad
  \textnormal{where}
  \quad
  P_{x,y}^*
  = 
  \frac{P_{x,y}}{\Pi_x} \Pi_y
\end{equation}
as follows \cite[Prop.~3.4]{paulin2015concentration}: \emph{For $\eps \in [0,1)$, $\gamma_{\mathrm{ps}} \geq (1-\eps) / t_{\mathrm{mix}}(\varepsilon/2)$.} For \glspl{BMC} in particular, this implies that $\gamma_{\mathrm{ps}} \geq 1/(2(1+4\eta))$; see the paragraph preceding \cite[SM1(26)]{sanders2020clustering}.

\subsection{Bounded degrees}
\label{sec:Bounded_degree_property}

Using \eqref{eqn:Paulins_concentration_inequality} we can prove for example that if we were to picture a sample path $X_0, X_1, \ldots, X_{T_n}$ as a directed graph, then the in- and outdegree of all states (vertex) are $O_{\mathbb{P}}(T_n/n)$.
Recall the notation that $\hat{N}_{\mathcal{A},\mathcal{B}} = \sum_{x \in \mathcal{A}} \sum_{y \in \mathcal{B}} \hat{N}_{x,y}$ for any two subsets $\mathcal{A}, \mathcal{B} \subseteq [n]$. 
The out- and indegree of a state $y \in [n]$ are then given by $\hat{N}_{y,[n]}$, $\hat{N}_{[n],y}$, respectively. We prove the following in \refAppendixSection{sec:Proof_of__Bounded_degree_property}:

\begin{lemma}
  \label{lem:Bounded_degree_property}

  The following holds for any \gls{BMC}:

  \begin{enumerate}[label=(\alph*)]
    \item
      \label{itm:Bounded_degrees_when_not_trimming}
      If $T_n = \Omega(n \ln{n})$, then there exists a constant $\mathfrak{b}_1 > 0$ independent of $n$ such that for sufficient large $n$
      \begin{equation}
        \max_{y \in [n]}
        \bigl\{
        \hat{N}_{[n],y} \vee \hat{N}_{y,[n]}
        \bigr\}
        \leq
        \mathfrak{b}_1 \frac{T_n}{n}
        \quad
        \textnormal{at least with probability}
        \quad
        1 - \frac{2}{n}
        .
      \end{equation}

    \item
      \label{itm:Bounded_degrees_when_trimming}
      If $T_n = \omega(n)$ and $\Gamma^{\mathrm{c}}$ is a set of size $\lfloor n \e{-T_n/n} \rfloor$ containing the states with highest number of visits, then there exists an constant $\mathfrak{b}_2 > 0$ independent of $n$ such that for sufficiently large $n$
      \begin{equation}
        \max_{y \in \Gamma}
        \bigl\{
        \hat{N}_{\Gamma,y} \vee  \hat{N}_{y,\Gamma}
        \bigr\}
        \leq
        \mathfrak{b}_2 \frac{T_n}{n}
        \quad
        \textnormal{at least with probability}
        \quad
        1 - 2 \e{-\frac{T_n}{n}}
        .
        \label{eqn:Bounded_degree_when_trimming_more}
      \end{equation}
  \end{enumerate}
\end{lemma}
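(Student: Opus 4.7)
The plan is to apply Paulin's concentration inequality~\eqref{eqn:Paulins_concentration_inequality} to the indicator $f(x) = \indicator{x = y}$ for each fixed $y \in [n]$, combined with the fact (from the preceding subsection) that the pseudo spectral gap of a \gls{BMC} is bounded below by a positive constant. For this $f$ one has $\expectationWrt{f(X_t)}{\Pi} = \Pi_y$, $V_f \leq \Pi_y = \bigO{1/n}$ by \refLemma{lem:Asymptotic_upper_bound_on_Nxy}, and $|f(x) - \Pi_y| \leq 1$. Moreover $\hat{N}_{y,[n]} = \sum_{t=0}^{T_n - 1} \indicator{X_t = y}$ and $\hat{N}_{[n],y} = \sum_{t=1}^{T_n} \indicator{X_t = y}$, so $|\hat{N}_{y,[n]} - \hat{N}_{[n],y}| \leq 1$. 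It therefore suffices to control the indegree $\hat{N}_{[n], y}$ and pass to the outdegree at the cost of a $+1$.

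Setting the deviation $z = (\mathfrak{b} - c_0) T_n/n$ with $c_0$ chosen so that $T_n \Pi_y \leq c_0 T_n/n$, the variance-like term $8(T_n + 1/\gamma_{\mathrm{ps}}) V_f$ in~\eqref{eqn:Paulins_concentration_inequality} is $\bigO{T_n/n}$ and the linear term $20 z C$ is $\bigO{(\mathfrak{b} - c_0) T_n/n}$, so the exponent simplifies to $-\mathfrak{c}(\mathfrak{b}) T_n/n$ for a constant $\mathfrak{c}(\mathfrak{b}) \to \infty$ as $\mathfrak{b} \to \infty$. This yields
\begin{equation}
  \probabilityBig{ \hat{N}_{[n], y} \geq \mathfrak{b} T_n/n }
  \leq
  2 \exp\Bigl( - \mathfrak{c}(\mathfrak{b}) \frac{T_n}{n} \Bigr)
  \quad \textnormal{for each } y \in [n].
  \label{eqn:plan_paulin_tail}
\end{equation}
Part~\ref{itm:Bounded_degrees_when_not_trimming} then follows by a direct union bound: since $T_n = \Omega(n \ln n)$, I choose $\mathfrak{b}_1$ large enough that $\mathfrak{c}(\mathfrak{b}_1) T_n/n \geq 3 \ln n$, so each of the $2n$ events (in- and out-degree for every $y$) has failure probability at most $2/n^3$, yielding total failure probability at most $4/n^2 \leq 2/n$ for large $n$.

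For part~\ref{itm:Bounded_degrees_when_trimming} the regime $T_n = \omega(n)$ is too sparse for a direct union bound, so I would instead use a Markov-type counting argument on $Z = |\{y \in [n] : \hat{N}_{[n], y} \geq \mathfrak{b}_2 T_n/n\}|$. Choose $\mathfrak{b}_2$ so that $\mathfrak{c}(\mathfrak{b}_2) > 2$; then by~\eqref{eqn:plan_paulin_tail}, $\expectation{Z} \leq 2n \e{-\mathfrak{c}(\mathfrak{b}_2) T_n/n}$, and Markov's inequality combined with $\lfloor n \e{-T_n/n} \rfloor \geq \tfrac{1}{2} n \e{-T_n/n}$ yields
\begin{equation}
  \probabilityBig{ Z \geq \lfloor n \e{-T_n/n} \rfloor }
  \leq
  4 \e{-(\mathfrak{c}(\mathfrak{b}_2) - 1) T_n/n}
  \leq
  2 \e{-T_n/n}
\end{equation}
for $n$ large enough. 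On the complementary event fewer than $|\Gamma^{\mathrm{c}}|$ states have indegree at least $\mathfrak{b}_2 T_n/n$, and since $\Gamma^{\mathrm{c}}$ by definition contains the $|\Gamma^{\mathrm{c}}|$ most-visited states, every $y \in \Gamma$ must satisfy $\hat{N}_{[n], y} < \mathfrak{b}_2 T_n/n$. The desired bound~\eqref{eqn:Bounded_degree_when_trimming_more} then follows from $\hat{N}_{\Gamma, y} \leq \hat{N}_{[n], y}$ and $\hat{N}_{y, \Gamma} \leq \hat{N}_{y, [n]} \leq \hat{N}_{[n], y} + 1$ (enlarging $\mathfrak{b}_2$ by $1$ if needed).

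The main obstacle is the bookkeeping that produces~\eqref{eqn:plan_paulin_tail}: one has to verify that Paulin's exponent $\mathfrak{c}(\mathfrak{b})$ genuinely grows without bound as $\mathfrak{b} \to \infty$, with the growth quantified explicitly enough to arrange $\mathfrak{c}(\mathfrak{b}_1) \geq 3/C_0$ (where $T_n \geq C_0 n \ln n$) in~\ref{itm:Bounded_degrees_when_not_trimming}, and $\mathfrak{c}(\mathfrak{b}_2) > 2$ in~\ref{itm:Bounded_degrees_when_trimming}. Everything else is either a routine union bound or a one-line Markov estimate.
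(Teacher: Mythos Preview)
Your proposal is correct and follows essentially the same route as the paper: apply Paulin's inequality~\eqref{eqn:Paulins_concentration_inequality} to $f=\indicator{\cdot=y}$ to obtain a per-state tail bound $2\e{-\mathfrak{c}\,T_n/n}$, use $|\hat N_{y,[n]}-\hat N_{[n],y}|\le 1$, then a union bound for~\ref{itm:Bounded_degrees_when_not_trimming} and a Markov/counting argument on the number of high-degree states for~\ref{itm:Bounded_degrees_when_trimming}. The only bookkeeping difference is that the paper explicitly separates the edge case $|\Gamma^{\mathrm c}|=\lfloor n\e{-T_n/n}\rfloor=0$ (where your inequality $\lfloor n\e{-T_n/n}\rfloor\ge \tfrac12 n\e{-T_n/n}$ fails) and reduces it to part~\ref{itm:Bounded_degrees_when_not_trimming}, and uses $|\Gamma^{\mathrm c}|+1$ rather than $|\Gamma^{\mathrm c}|$ in the Markov denominator so as to sidestep this issue.
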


With \refLemma{lem:Bounded_degree_property}\ref{itm:Bounded_degrees_when_trimming} we can see that, whenever $T = \omega(n)$, the trimming of a fixed number of largest-degree states as also used in \cite{sanders2020clustering} controls the degrees with high probability just as with the usual trimming of states with degrees above a threshold in \cite{feige2005spectral}.

\subsection{Discrepancy property}
\label{sec:Discrepancy_property}

For $\mathcal{A}, \mathcal{B} \subseteq V$, let
\begin{equation}
  e(\mathcal{A}, \mathcal{B}) 
  = 
  \sum_{i \in \mathcal{A}} \sum_{j \in \mathcal{B}} 
  \hat{N}_{ij}
  \label{eqn:def_e_I_J}
\end{equation}
and $\mu(\mathcal{A}, \mathcal{B}) = \expectation{ e(\mathcal{A}, \mathcal{B}) }$. A similar definition will be used when trimming: for $\mathcal{A}, \mathcal{B} \subseteq V$, let $e_{\Gamma}(\mathcal{A}, \mathcal{B}) = \sum_{i \in \mathcal{A}} \sum_{j \in \mathcal{B}} (\hat{N}_{\Gamma})_{ij}$. Note that for any fixed $\mathcal{A}, \mathcal{B} \subset [n]$, $e_{\Gamma}(\mathcal{A}, \mathcal{B}) \leq e(\mathcal{A}, \mathcal{B})$. We define now the discrepancy property. For graphs, this property tells us that the graph has no denser subgraph compared to itself. This will help us in the bounding of the spectral norm later on.

\begin{definition}
  \label{def:Discrepancy_property}

Let $\mathfrak{d}_1, \mathfrak{d}_2 > 0$ be two constants independent of $n$. 
We say that \emph{$\hat{N}$ is $(\mathfrak{d}_1, \mathfrak{d}_2)$-discrepant} if for every pair $( \mathcal{A}, \mathcal{B} ) \subseteq [n]^2$ one of the following holds:
  \begin{enumerate}[label=(\roman*)]
    \item 
    \label{itm:Discrepancy_property__i} 
    $
      \frac{ e( \mathcal{A}, \mathcal{B} ) n^2 }{ | \mathcal{A} | | \mathcal{B} | T_n } 
      \leq 
      \mathfrak{d}_1
    $,
  
    \item 
    \label{itm:Discrepancy_property__ii} 
    $
      e( \mathcal{A}, \mathcal{B} ) \ln{ \frac{ e( \mathcal{A}, \mathcal{B} ) n^2 }{ | \mathcal{A} | | \mathcal{B} | T_n } } 
      \leq 
      \mathfrak{d}_2 ( | \mathcal{A} | \vee | \mathcal{B} | ) \ln{ \frac{n}{ | \mathcal{A} | \vee | \mathcal{B} | } }
    $.
  \end{enumerate}
  Similarly, we say that \emph{$\hat{N}_\Gamma$ is $(\mathfrak{d}_1, \mathfrak{d}_2)$-discrepant} when the conditions hold with $e_\Gamma(\mathcal{A}, \mathcal{B})$ replacing $e(\mathcal{A}, \mathcal{B})$.
\end{definition}

We prove that if the bounded degree property holds, then the discrepancy property also holds with high probability. The constants $\mathfrak{d}_1$ and $\mathfrak{d}_2$ will be positive and dependent on $\alpha$ and $p$. The proof follows the method in \cite{lei2015consistency} and is relegated to \refAppendixSection{sec:Appendix__Proof_of_the_discrepancy_property}:

\begin{proposition}
  \label{prop:discrepancy_property_N_hat_holds}

  For any \gls{BMC} there exist sufficiently large constants $\mathfrak{b}_3, \mathfrak{b}_4, \mathfrak{d}_1, \mathfrak{d}_2 > 0$ independent of $n$ such that the following holds:
  
  \begin{enumerate}[label=(\alph*)]
    \item
      \label{itm:Discrepancy_property_when_not_trimming}
      If $T_n = \Omega(n \ln{n})$ and
      $
        \max_{y \in [n]}
        \bigl\{
        \hat{N}_{[n],y} \vee \hat{N}_{y,[n]}
        \bigr\}
        \leq
        \mathfrak{b}_3 T_n/n
        ,
      $
      then 
      for sufficiently large $n$, 
      $\hat{N}$ is $(\mathfrak{d}_1, \mathfrak{d}_2)$-discrepant at least with probability $1-1/n$.

    \item
      \label{itm:Discrepancy_property_when_trimming}  
      If $T_n = \omega(n)$, $\Gamma^{\mathrm{c}}$ is a set of size $\lfloor n \e{-T_n/n} \rfloor$ containing the states with highest number of visits, and moreover 
      $
        \max_{y \in \Gamma}
        \bigl\{
        \hat{N}_{\Gamma,y} \vee \hat{N}_{y,\Gamma}
        \bigr\}
        \leq
        \mathfrak{b}_4 T_n/n
        ,
       $
       then 
       for sufficiently large $n$,
       $\hat{N}_\Gamma$ is $(\mathfrak{d}_1, \mathfrak{d}_2)$-discrepant at least with probability $1-1/n$.      
  \end{enumerate}
\end{proposition}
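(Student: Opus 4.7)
The plan is to adapt the discrepancy-property argument of Lei and Rinaldo (2015), originally developed for \glspl{SBM} under independent-edge assumptions, to the correlated transition counts in a \gls{BMC} by substituting Paulin's Markov-chain concentration inequality~\eqref{eqn:Paulins_concentration_inequality} for the independent-edge Chernoff bound. Parts~(a) and~(b) of the proposition require essentially the same argument, so I focus on~(a); part~(b) follows by replacing $\hat{N}$ with $\hat{N}_\Gamma$ and observing that $e_\Gamma(\mathcal{A}, \mathcal{B}) \leq e(\mathcal{A} \cap \Gamma, \mathcal{B} \cap \Gamma)$, which only makes the deviation events smaller. I would classify every pair $(\mathcal{A}, \mathcal{B}) \subseteq [n]^2$ by $m := |\mathcal{A}| \vee |\mathcal{B}|$ relative to $n$ and treat large and small pairs separately.

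\textbf{Large pairs.} When $m \geq c_0 n$ for a small constant $c_0 > 0$ to be fixed later, the hypothesized bounded-degree inequality gives
\begin{equation*}
  e(\mathcal{A}, \mathcal{B}) \leq \min\Bigl\{ \sum_{y \in \mathcal{B}} \hat{N}_{[n],y}, \sum_{x \in \mathcal{A}} \hat{N}_{x,[n]} \Bigr\} \leq (|\mathcal{A}| \wedge |\mathcal{B}|)\, \mathfrak{b}_3 T_n/n,
\end{equation*}
from which $e(\mathcal{A}, \mathcal{B}) n^2/(|\mathcal{A}||\mathcal{B}| T_n) \leq \mathfrak{b}_3 n/m \leq \mathfrak{b}_3/c_0$. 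Taking $\mathfrak{d}_1 \geq \mathfrak{b}_3/c_0$ ensures condition~(i) of \refDefinition{def:Discrepancy_property} holds deterministically on the bounded-degree event for all large pairs.

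\textbf{Small pairs.} For $m < c_0 n$ I would write $e(\mathcal{A}, \mathcal{B}) = \sum_{t=0}^{T_n - 1} f(X_t, X_{t+1})$ with $f(x,y) = \ind[x \in \mathcal{A}, y \in \mathcal{B}]$, exploit that the pair chain $\{(X_t, X_{t+1})\}_{t \geq 0}$ inherits the $\Theta(1)$ mixing of $\process{X_t}{t \geq 0}$ from \refSection{sec:Properties_of_BMCs}, and note that by \refLemma{lem:Asymptotic_upper_bound_on_Nxy}, $\mu(\mathcal{A}, \mathcal{B}) \leq \mathfrak{n}_2 \bar{\mu}$ with $\bar{\mu} := a b T_n/n^2$ and $a=|\mathcal{A}|, b=|\mathcal{B}|$. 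Applying~\eqref{eqn:Paulins_concentration_inequality} to $f$ in its large-deviation regime, refined to a Poisson-style tail in the spirit of \cite{feige2005spectral}, would yield for $\tau \geq 3 \mathfrak{n}_2 \bar{\mu}$ a bound
\begin{equation*}
  \probability{e(\mathcal{A}, \mathcal{B}) \geq \tau} \leq 2 \exp\bigl(-c_1 \tau \ln(\tau / \bar{\mu})\bigr),
\end{equation*}
for some $c_1 > 0$ depending on $\gamma_{\mathrm{ps}}$ and $\mathfrak{n}_2$. The number of pairs of sizes $(a, b)$ is at most $\binom{n}{a} \binom{n}{b} \leq \exp\bigl(2 m \ln(en/m)\bigr)$. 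Taking a union bound at the threshold at which condition~(ii) becomes tight (so that $\tau \ln(\tau/\bar{\mu}) = \mathfrak{d}_2 m \ln(n/m)$) and choosing $\mathfrak{d}_2$ large enough that $c_1 \mathfrak{d}_2 > 4$ makes the tail decay dominate the combinatorial factor; summing over $a, b \in \{1, \ldots, \lfloor c_0 n \rfloor\}$ gives the desired failure probability $O(1/n)$.

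\textbf{Main obstacle.} The main technical hurdle is sharpening Paulin's inequality — which natively yields only a linear Bernstein decay $\exp(-c z)$ in the large-deviation regime — into the Poisson-type tail $\exp(-c \tau \ln(\tau/\bar{\mu}))$ needed to match the logarithmic factor in condition~(ii) of \refDefinition{def:Discrepancy_property}. For independent Bernoullis this improvement is immediate from the standard Chernoff bound, but for Markov chains I would have to recover it either by iterating Paulin's bound across geometric scales of $\tau$, or by partitioning the path into mixing blocks of length $O(t_{\mathrm{mix}}) = O(1)$ so that block sums become essentially independent and a Bennett-type bound applies directly. Once that sharpened tail is in hand, the choices of $c_0$, $\mathfrak{d}_1$, $\mathfrak{d}_2$, as explicit functions of $\mathfrak{b}_3$, $\mathfrak{b}_4$, $\mathfrak{n}_2$ and $\gamma_{\mathrm{ps}}$, are routine.
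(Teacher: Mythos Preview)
Your approach is correct and essentially matches the paper's: the same large/small-pair split (the paper uses the threshold $|\mathcal{B}| \leq n/\e{}$), the same bounded-degree verification of condition~(i) for large pairs, and the same Poisson-tail-plus-union-bound scheme for small pairs. For your ``main obstacle'', the paper takes your second route---block partitioning---in its simplest form, and does \emph{not} go through Paulin's inequality at all for this lemma: it splits the transitions into even- and odd-indexed times, so that consecutive terms within each subsequence are separated by one step of the chain; then a direct Chernoff argument via iterated conditioning (tower property) gives $\expectation{\e{2h e_0(\mathcal{A},\mathcal{B})}} \leq \bigl(1+\e{2h}\mathfrak{p}_2|\mathcal{A}||\mathcal{B}|/n^2\bigr)^{T_n/2}$, and optimizing over $h$ yields the Bennett/Poisson tail $\probability{e(\mathcal{A},\mathcal{B}) \geq k\mu(\mathcal{A},\mathcal{B})} \leq 2\exp\bigl(-\tfrac{1}{4}k\ln k\,\mu(\mathcal{A},\mathcal{B})\bigr)$ directly. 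The subsequent union bound and the choice of the threshold (the paper defines $k^\star(\mathcal{A},\mathcal{B})$ via $t\ln t = \mathrm{const}\cdot|\mathcal{B}|\ln(n/|\mathcal{B}|)/\mu(\mathcal{A},\mathcal{B})$) coincide with your outline, as does the treatment of part~(b) via $e_\Gamma \leq e$.
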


\section{Bounding the spectral norm of \texorpdfstring{$\hat{N}_\Gamma - N$}{Nhat - N}}
\label{sec:norm_of_BMC_matrices_and_applications}

We will now prove \refTheorem{thm:Singular_value_gap} by bounding the \emph{spectral norm} of $\hat{N}_\Gamma - N$, i.e., the \emph{operator norm induced by the vector $2$-norm}:
\begin{equation}  
  \pnorm{ \hat{N}_\Gamma - N }{}
  = 
  \sup_{ x \in \realNumbers^n \backslash \{ 0 \} } 
  \frac{ \pnorm{ (\hat{N}_\Gamma - N) x }{2} }{ \pnorm{x}{2} }
  .
  \label{eqn:def_spectral_norm_2_norm}
\end{equation}
For any matrix $A \in \realNumbers^{n \times n}$ we namely have
$
  \pnorm{A}{} 
  = 
  \sigma_1(A)
  .
$
Instead of working with \eqref{eqn:def_spectral_norm_2_norm}, we will use a rectangular quotient relation for convenience \cite[(3.10)]{dax2013eigenvalues}: observe that
\begin{equation}
  \pnorm{ \hat{N}_\Gamma - N }{}
  = 
  \sup_{x,y \in \mathbb{S}_1^{n-1}(0)} 
  | x^{\mathrm{T}} ( \hat{N}_\Gamma - N ) y |
  .
  \label{eqn:Rectangular_quotient_relation}
\end{equation}

The proof strategy is as follows. We first use an $\epsilon$-net argument to pass the supremum over the set $\mathbb{S}_1^{n-1}(0)$ in \eqref{eqn:Rectangular_quotient_relation} to a maximization over a finite set $\mathcal{T}_\epsilon$ say. Next, for each $(x,y) \in \mathcal{T}_{\epsilon}$, we can bound the sum $|x^{T}( \hat{N}_\Gamma - N )y| \leq L(x,y) + H(x,y)$ by the sum of a sum over entries of $x$ and $y$ whose sizes are small, $L(x,y)$, and a sum over entries whose sizes are large, $H(x,y)$. These will be called the contributions of the light pairs and heavy pairs, respectively. For the light pairs, concentration results for sums of entries of $\hat{N}_{\Gamma}$ and using the fact that $\Gamma$ is of fixed size, although random in content, we can prove the bound $L(x,y) = O_{\mathbb{P}}(\sqrt{T_n/n})$. For the heavy pairs, concentration results for the entries of $\hat{N}_{\Gamma}$ are not enough in the sparse regime. Instead, we use the discrepancy property of $\hat{N}$ and $\hat{N}_{\Gamma}$. This property of graphs says roughly that the number of edges between two sets is not much  larger than its average. We prove that $\hat{N}_{\Gamma}$ satisfies the discrepancy property with high probability and using this fact we can prove that $H(x,y) = O_{\mathbb{P}}(\sqrt{T_n/n})$.

\subsection{Passing to a finite \texorpdfstring{$\epsilon$}{epsilon}-net}

We start by defining $\epsilon$-nets:

\begin{definition}
  Let $\epsilon \in (0,\infty)$. An \emph{$\epsilon$-net for $(\mathbb{B}_1^n(0),\pnorm{\cdot}{2})$} is a finite subset $\mathcal{N}_\epsilon \subseteq \mathbb{B}_1^n(0)$ such that for any $x \in \mathbb{B}_1^n(0)$ there exists $y \in \mathcal{N}_\epsilon$ such that $\pnorm{x-y}{2} \leq \epsilon$.  
\end{definition}

An $\epsilon$-net for $(\mathbb{B}_1^n(0),\pnorm{\cdot}{2})$ namely has several useful properties, which we will exploit. The following properties are proven in \refAppendixSection{sec:Proof_of_Properties_of_epsilon_nets}. For any subset $\mathcal{A} \subseteq [n]$ and any vector $b \in \R^n$, we let $b^{\mathcal{A}} \in \R^{|\mathcal{A}|}$ denote the vector obtained by deleting the rows in the index set $\mathcal{A}$.

\begin{lemma}
  \label{lem:Properties_of_epsilon_nets}
  The following holds:
  
  \begin{enumerate}[label=(\alph*)]
    \item
    \label{itm:Upper_bound_for_an_epsilon_net}
	Let $\epsilon \in (0, 1/3)$. If $\mathcal{N}_{\epsilon}$ is an $\epsilon$-net for $(\mathbb{B}_1^n(0), \pnorm{\cdot}{2})$, then for any matrix $A \in \R^{n \times n}$
    \begin{equation}
      \pnorm{A}{} 
      = 
      \sup_{x,y \in \mathbb{S}_1^{n-1}(0)} |x^{\mathrm{T}}Ay| 
      \leq 
      \frac{1}{1 - 3\epsilon} 
      \sup_{x,y \in \mathcal{N}_{\epsilon}} |x^{\mathrm{T}}Ay|
      .
    \end{equation}

    \item
    \label{itm:Epsilon_net_induced_by_a_minor}
    Let $\epsilon \in (0, \infty)$. If $\mathcal{N}_\epsilon$ is an $\epsilon$-net for $(\mathbb{B}_1^n(0),\pnorm{\cdot}{2})$, then for any subset $\mathcal{A} \subseteq [n]$, the subset $\mathcal{N}^{\mathcal{A}}_{\epsilon} = \{ x^{\mathcal{A}} : x \in \mathcal{N}_{\epsilon} \}$ is an $\epsilon$-net for $(\mathbb{B}_1^{|\mathcal{A}|}(0),\pnorm{\cdot}{2})$. 
  \end{enumerate}
\end{lemma}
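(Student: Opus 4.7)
The plan is to handle the two parts separately, as each is a standard but distinct manipulation.

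For part \ref{itm:Upper_bound_for_an_epsilon_net}, I would run a routine $\epsilon$-net approximation on both sides of the bilinear form. Given arbitrary $x, y \in \mathbb{S}_1^{n-1}(0)$, use the $\epsilon$-net property of $\mathcal{N}_\epsilon$ to select $x', y' \in \mathcal{N}_\epsilon$ with $\pnorm{x-x'}{2} \leq \epsilon$ and $\pnorm{y-y'}{2} \leq \epsilon$, then expand
\begin{equation*}
  x^{\mathrm{T}} A y
  =
  x'^{\mathrm{T}} A y'
  + (x-x')^{\mathrm{T}} A y'
  + x'^{\mathrm{T}} A (y-y')
  + (x-x')^{\mathrm{T}} A (y-y').
\end{equation*}
Because $\mathcal{N}_\epsilon \subseteq \mathbb{B}_1^n(0)$, we have $\pnorm{x'}{2} \vee \pnorm{y'}{2} \leq 1$, so each of the three error terms is bounded by $\epsilon \pnorm{A}{}$, $\epsilon \pnorm{A}{}$, and $\epsilon^2 \pnorm{A}{}$ respectively via the definition $\pnorm{A}{} = \sup_{u \neq 0} \pnorm{Au}{2}/\pnorm{u}{2}$. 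Hence $|x^{\mathrm{T}}Ay| \leq \sup_{u,v \in \mathcal{N}_\epsilon} |u^{\mathrm{T}}Av| + (2\epsilon + \epsilon^2)\pnorm{A}{}$. Taking the supremum over $(x,y) \in \mathbb{S}_1^{n-1}(0)^2$ and applying \eqref{eqn:Rectangular_quotient_relation} yields $(1 - 2\epsilon - \epsilon^2)\pnorm{A}{} \leq \sup_{u,v \in \mathcal{N}_\epsilon} |u^{\mathrm{T}}Av|$. Since $\epsilon \in (0,1/3)$ implies $\epsilon^2 < \epsilon$, we have $2\epsilon + \epsilon^2 < 3\epsilon < 1$, so dividing gives the stated bound.

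For part \ref{itm:Epsilon_net_induced_by_a_minor}, the natural approach is a zero-padding embedding. Given any vector $\tilde{y}$ in the lower-dimensional ball, extend it to a vector $y \in \R^n$ by inserting zeros at precisely the coordinates that get removed by the $\mathcal{A}$-operation; since padding with zeros does not change the Euclidean norm, $y$ lies in $\mathbb{B}_1^n(0)$. The $\epsilon$-net property of $\mathcal{N}_\epsilon$ then produces some $x \in \mathcal{N}_\epsilon$ with $\pnorm{x-y}{2} \leq \epsilon$. Restricting both vectors by the same $\mathcal{A}$-operation only deletes coordinates and hence can only decrease the Euclidean norm, so $\pnorm{x^{\mathcal{A}} - \tilde{y}}{2} \leq \pnorm{x-y}{2} \leq \epsilon$, confirming that $\mathcal{N}^{\mathcal{A}}_\epsilon$ covers the lower-dimensional ball at scale $\epsilon$; finiteness is inherited trivially from $\mathcal{N}_\epsilon$.

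Neither part presents a substantive obstacle; the entire lemma is a compactness/linear-algebra warm-up exercise whose role is merely to reduce later bounds on $\pnorm{\hat{N}_\Gamma - N}{}$ to a maximization over a finite, explicitly constructible index set. The only mild subtlety is that in part \ref{itm:Upper_bound_for_an_epsilon_net} one must ensure $\mathcal{N}_\epsilon \subseteq \mathbb{B}_1^n(0)$ (rather than only $\mathbb{S}_1^{n-1}(0)$) so that $\pnorm{x'}{2}, \pnorm{y'}{2} \leq 1$ in the error estimates, which is guaranteed by the definition of an $\epsilon$-net stated here.
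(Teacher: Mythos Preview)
Your proposal is correct and essentially identical to the paper's proof. The only cosmetic difference is in the bilinear expansion for part~\ref{itm:Upper_bound_for_an_epsilon_net}: you place the net points $x', y'$ in the error terms, while the paper keeps the original unit vectors there, but both decompositions yield the same $(2\epsilon+\epsilon^2)\pnorm{A}{}$ error and the argument proceeds identically; part~\ref{itm:Epsilon_net_induced_by_a_minor} matches the paper's zero-padding argument verbatim.
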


In order to control the size of the entries of $x \in \mathcal{N}_{\epsilon}$, we will use the following specific set, which is also used in \cite{friedman1989second,feige2005spectral,lei2015consistency}:
\begin{equation}
  \mathcal{T}_{\epsilon} 
  = 
  \Bigl\{ 
    x \in \R^{n} 
    : 
    x \in \frac{\epsilon}{\sqrt{n}}\mathbb{Z}^n, \pnorm{x}{2} \leq 1 
  \Bigr\}
  .
  \label{eqn:Definition_T}
\end{equation}
Observe that $\mathcal{T}_{\epsilon}$ is indeed an $\epsilon$-net for $(\mathbb{B}_1^n(0),\pnorm{\cdot}{2})$. The properties in \refLemma{lem:Properties_of_epsilon_nets} thus apply to $\mathcal{T}_{\epsilon}$. Furthermore, $|\mathcal{T}_{\epsilon}| \leq (9/\epsilon)^{n}$ \cite[Claim 2.9]{feige2005spectral}.

\subsection{The sets of light- and heavy pairs}

The next course of action will be to derive for every $x, y \in \mathcal{T}_\epsilon$ an upper bound of the type $|x^{\mathrm{T}} ( \hat{N}_\Gamma - N ) y| \leq \mathfrak{c} \sqrt{T_n/n}$, where $\mathfrak{c}$ is a constant independent of $n$, that holds with probability $1 - \bigO{1/n}$. 
For $x, y \in \mathbb{B}_1^n(0)$, define the \emph{set of light pairs} by
\begin{equation}
  \mathcal{L}(x,y) 
  = 
  \Bigl\{ 
    (i,j) \in [n]^2 
    : 
    |x_i y_j| \leq \frac{1}{n}\sqrt{\frac{T_n}{n}} 
  \Bigr\}
  .
  \label{eqn:Definition__Set_of_light_pairs}
\end{equation}
Similarly, we define the \emph{set of heavy pairs} by
\begin{equation}
  \mathcal{H}(x,y)
  =
  \mathcal{L}^{\mathrm{c}}(x,y) 
  = 
  \Bigl\{ 
    (i,j) \in [n]^2 
    : 
    |x_i y_j| > \frac{1}{n}\sqrt{\frac{T_n}{n}} 
  \Bigr\}
  .
  \label{eqn:Definition__Set_of_heavy_pairs}
\end{equation}
Using the triangle inequality we can then split the bounding by writing
\begin{align}
  | \transpose{x} ( \hat{N}_\Gamma - N ) y | 
  &
  \leq 
  \Bigl| 
    \sum_{(i,j) \in \mathcal{L}} 
    x_i y_j \bigl( (\hat{N}_{\Gamma})_{ij} - N_{ij} \bigr)
  \Bigr| 
  + 
  \Bigl| 
    \sum_{(i,j)\in \mathcal{L}^{\mathrm{c}}} 
    x_i y_j \bigl( (\hat{N}_{\Gamma})_{ij} - N_{ij} \bigr)
  \Bigr| 
  \nonumber \\ &
  = 
  L(x,y) + H(x,y)
  \label{eqn:Split_into_contributions_of_light_and_heavy_pairs}
\end{align}
say, almost surely. Here, $L(x,y)$ and $H(x,y)$ denote the \emph{contributions of the light} and \emph{heavy pairs}, respectively. To simplify the exposition, we will omit the indication $(x,y)$ from the sets of light and heavy pairs whenever they appear in a subscript.

\subsection{Bounding the contribution of the light pairs}

We split the bounding of $L(x,y)$ into two parts. Let $\mathcal{K}^{\mathrm{c}} = ( \Gamma^c \times [n]) \cup ( [n]\times \Gamma^c )$ denote the set of transitions that are trimmed (recall that $\Gamma^c$ denotes the set of states that are trimmed). Using the facts that (i) $(\hat{N}_{\Gamma})_{ij} = 0$ whenever $i \notin \Gamma$ or $j \notin \Gamma$ by its definition in \eqref{eqn:def_hat_N_Gamma} and (ii) $\mathcal{K} = \Gamma^2$ as well as the triangle inequality, we obtain
\begin{align}
  L(x,y) 
  &
  \eqcom{\ref{eqn:Split_into_contributions_of_light_and_heavy_pairs}}
  = 
  \Bigl| 
    \sum_{ (i,j) \in \mathcal{L} } 
    x_i y_j \bigl( (\hat{N}_{\Gamma})_{ij} - N_{ij} \bigr) 
  \Bigr| 
  \nonumber \\ & 
  \eqcom{i} 
  =
  \Bigl| 
    \sum_{ (i,j) \in \mathcal{L} \cap \mathcal{K} } 
    x_i y_j ( \hat{N}_{ij} - N_{ij} )
    - 
    \sum_{ (i,j) \in \mathcal{L} \cap \mathcal{K}^{\mathrm{c}} } x_i y_j N_{ij}
  \Bigr|
  \nonumber \\ & 
  \eqcom{ii}
  \leq
  \Bigl| 
    \sum_{ (i,j) \in \mathcal{L} \cap \Gamma^2 } x_i y_j ( \hat{N}_{ij} - N_{ij} )
  \Bigr|
  +
  \Bigl|
    \sum_{ (i,j) \in \mathcal{L} \cap \mathcal{K}^{\mathrm{c}} } x_i y_i N_{ij}
  \Bigr|  
  \nonumber \\ & 
  =
  L_1(x,y) + L_2(x,y)
  \label{eqn:Split_contribution_L1_L2_in_light_pairs}
\end{align}
say, almost surely.

\subsubsection{Bounding \texorpdfstring{$L_1(x,y)$}{L1(x,y)}}

For any subset $\mathcal{A} \subseteq [n]$ and matrix $A \in \R^{n \times n}$, let $A^{\mathcal{A}}$ denote the submatrix obtained by deleting the rows and columns in the index set $\mathcal{A}$. Consequently $A \in \R^{|\mathcal{A}| \times |\mathcal{A}|}$. Recall that we have adopted similar notation for vectors. 
Define for any subset $\mathcal{A} \subseteq [n]$, not necessarily random, and any $x, y \in \mathbb{B}_1^n(0)$, 
\begin{equation}
  \mathcal{L}^{\mathcal{A}}(x,y)
  =
  \mathcal{L}(x,y) \cap \mathcal{A}^2
  = 
  \Bigl\{ 
    (i,j) \in \mathcal{A}^2 
    : 
    | x_i y_j | 
    \leq 
    \frac{1}{n} \sqrt{\frac{T_n}{n}} 
  \Bigr\}
\end{equation} 
as well as 
\begin{equation}
  L^{\mathcal{A}}(x,y)
  =
  \Bigl| 
    \sum_{ (i,j) \in \mathcal{L}^{\mathcal{A}} } x_i y_j (\hat{N}^{\mathcal{A}}_{ij} - N^{\mathcal{A}}_{ij})
  \Bigr| 
  .
  \label{eqn:Definition_of_LAxy}
\end{equation}
Note that for any $x,y \in \mathbb{B}_1^n(0)$, $L_1(x,y) = L^{\Gamma}(x,y)$ almost surely. 

We proceed in a manner similar to \cite{feige2005spectral} but must deal with the added difficulty that there are dependencies between the entries of $\hat{N}$. Moreover, $\Gamma$ is random. 
Our first step will therefore be to prove that for any deterministic minor, $\max_{x,y \in \mathcal{T}_{\epsilon}} L^{\mathcal{A}}(x,y) = \bigOP{\sqrt{T_n/n}}$. 
Our second step is to use Boole's inequality and lift this result to $\max_{\mathcal{A} \in \mathcal{M}_{n,\delta}} \max_{x,y \in \mathcal{T}_\epsilon} L^{\mathcal{A}}(x,y) = \bigOP{\sqrt{T_n/n}}$, where $\mathcal{M}_{n,\delta}$ denotes the set of all subsets of size at least $(1-\delta)n$. In particular, because $|\Gamma| = n -  \lfloor n \e{-T_n/n} \rfloor$ is deterministic, this implies the result.

\paragraph{Step 1: Minors induced by deterministic sets.} 
While we will ultimately use $\mathcal{T}_{\epsilon}$, the following arguments hold for any $\epsilon$-net for $(\mathbb{B}_1^n(0), \pnorm{\cdot}{2})$ with a small enough number of points.

\begin{lemma}
  \label{lemma:light_pairs_bound_minor}
  
  There exist a constant $\mathfrak{n}_2 > 0$ independent of $n$ and an integer $m \in \naturalNumbersPlus$ 
  such that 
  for all $n \geq m$, 
  any $\epsilon$-net $\mathcal{N}_\epsilon$ for $(\mathbb{B}_1^n(0),\pnorm{\cdot}{2})$ with cardinality at most $(9/\epsilon)^n$, such as $\mathcal{T}_\epsilon$,  
  any deterministic subset $\mathcal{A} \subseteq [n]$, 
  and any $\mathfrak{f} \geq \max{} \{ 328(1+4\eta) \ln{(9/\epsilon)}, 8\mathfrak{n}_2(3+8\eta) \}$,
  \begin{equation}
    \probabilityBig{ 
      \max_{x, y \in \mathcal{N}_{\epsilon}} 
      |L^{\mathcal{A}}(x,y)| 
      > 
      \mathfrak{f} \sqrt{\frac{T_n}{n}} 
    }
    \leq 
    2 
    \exp{ \Bigl( - \frac{ \mathfrak{f} n }{164(1+4\eta)} \Bigr) } 
    .
  \end{equation}
\end{lemma}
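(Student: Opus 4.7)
The plan is to first fix $(x,y) \in \mathcal{N}_\epsilon^2$ and the deterministic subset $\mathcal{A}$, apply a single Markov-chain concentration inequality to $L^{\mathcal{A}}(x,y)$, and then close the argument by a union bound whose cardinality $|\mathcal{N}_\epsilon|^2 \leq (9/\epsilon)^{2n}$ is absorbed into the exponent by choosing $\mathfrak{f}$ sufficiently large. Writing $\hat{N}^{\mathcal{A}}_{ij} = \sum_{t=0}^{T_n-1}\mathds{1}[X_t=i,X_{t+1}=j]\mathds{1}[i,j\in\mathcal{A}]$, the key observation is that
\begin{equation*}
  L^{\mathcal{A}}(x,y) = \Bigl|\sum_{t=0}^{T_n-1} g_{x,y}(X_t,X_{t+1}) - T_n\,\expectationWrt{g_{x,y}(X_0,X_1)}{\Pi}\Bigr|,
  \quad g_{x,y}(i,j) = x_i y_j\,\mathds{1}[(i,j)\in\mathcal{L}^{\mathcal{A}}(x,y)].
\end{equation*}
This is a deviation of an additive functional of the edge chain $(X_t,X_{t+1})_{t\ge 0}$ on $[n]^2$, whose stationary measure is $\Pi_iP_{ij}$ and whose pseudo spectral gap is of the same order as that of $(X_t)$, namely $\gamma_{\mathrm{ps}}\geq 1/(2(1+4\eta))$ by the paragraph preceding Section~\ref{sec:Bounded_degree_property}. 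Hence \eqref{eqn:Paulins_concentration_inequality} applies (after standard edge-chain reduction or splitting the sum over even/odd times).

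Second, I would produce the two inputs needed by Paulin's inequality. The definition of the light pairs immediately yields the uniform bound
$|g_{x,y}(i,j) - \expectationWrt{g_{x,y}}{\Pi}| \leq 2(1/n)\sqrt{T_n/n} =: C$. For the variance, I would use $\Pi_i P_{ij} = N_{ij}/T_n \leq \mathfrak{n}_2/n^2$ from \refLemma{lem:Asymptotic_upper_bound_on_Nxy} to bound
\begin{equation*}
  V_{g_{x,y}} \leq \expectationWrt{g_{x,y}^2}{\Pi} \leq \frac{\mathfrak{n}_2}{n^2}\sum_{(i,j)\in\mathcal{L}^{\mathcal{A}}} x_i^2 y_j^2 \leq \frac{\mathfrak{n}_2}{n^2}\,\pnorm{x}{2}^2\pnorm{y}{2}^2 \leq \frac{\mathfrak{n}_2}{n^2}.
\end{equation*}
Plugging $z=\mathfrak{f}\sqrt{T_n/n}$, $V_{g_{x,y}}\leq\mathfrak{n}_2/n^2$, $C=2n^{-1}\sqrt{T_n/n}$ and $\gamma_{\mathrm{ps}}\geq 1/(2(1+4\eta))$ into \eqref{eqn:Paulins_concentration_inequality}, the numerator of the exponent is $\mathfrak{f}^2 T_n/(2n(1+4\eta))$ and the denominator is at most $(T_n/n^2)(8\mathfrak{n}_2 + 40\mathfrak{f}) + O(1/n^2)$. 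For $n$ large the lower-order $1/\gamma_{\mathrm{ps}}$ term is negligible, so the exponent is at least $\mathfrak{f}^2 n\bigl/(2(1+4\eta)(8\mathfrak{n}_2+40\mathfrak{f}))$.

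Finally, the union bound over $(x,y)\in\mathcal{N}_\epsilon^2$ costs a factor $(9/\epsilon)^{2n}$, i.e.\ an additive term $2n\ln(9/\epsilon)$ in the exponent. It suffices to verify that
\begin{equation*}
  \frac{\mathfrak{f}^2 n}{2(1+4\eta)(8\mathfrak{n}_2+40\mathfrak{f})} \geq 2n\ln(9/\epsilon) + \frac{\mathfrak{f} n}{164(1+4\eta)}.
\end{equation*}
Splitting $40\mathfrak{f}+8\mathfrak{n}_2$ and handling the two right-hand terms separately, the hypothesis $\mathfrak{f}\geq 328(1+4\eta)\ln(9/\epsilon)$ absorbs the $\ln(9/\epsilon)$ contribution while $\mathfrak{f}\geq 8\mathfrak{n}_2(3+8\eta)$ absorbs the linear-in-$\mathfrak{f}$ contribution, and $n\geq m$ is used to neutralize the $1/\gamma_{\mathrm{ps}}$ correction. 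This yields the stated probability bound.

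\textbf{Main obstacle.} Paulin's bound as quoted is for sums of a function of a single state, whereas $L^{\mathcal{A}}(x,y)$ is a sum over consecutive pairs. The cleanest route is to pass to the edge chain $Y_t=(X_t,X_{t+1})$ and argue that its pseudo spectral gap is of the same order as $\gamma_{\mathrm{ps}}$, so the same constants survive; balancing this with the union-bound cost while keeping the explicit constants $328$ and $8(3+8\eta)$ clean is the delicate book-keeping step of the proof.
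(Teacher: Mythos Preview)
Your proposal is correct and follows essentially the same route as the paper: pass to the edge chain $(X_t,X_{t+1})$, apply Paulin's inequality \eqref{eqn:Paulins_concentration_inequality} with the same bounds $C=(2/n)\sqrt{T_n/n}$ and $V_f\leq\mathfrak{n}_2/n^2$, and then union-bound over $\mathcal{N}_\epsilon^2$. The only cosmetic difference is how the $1/\gamma_{\mathrm{ps}}$ correction in the denominator is handled: the paper does not treat it as asymptotically negligible but instead uses the crude bound $T_n\geq 1$ to get $1+1/(\gamma_{\mathrm{ps}}T_n)\leq 3+8\eta$, which is precisely the origin of the constant $8\mathfrak{n}_2(3+8\eta)$ in the hypothesis on $\mathfrak{f}$; your ``$n\geq m$ neutralizes it'' remark would instead require $T_n\to\infty$, which is not assumed in the lemma statement itself.
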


\begin{proof}
  Recall \refLemma{lem:Asymptotic_upper_bound_on_Nxy}: there exists a constant $\mathfrak{n}_2 > 0$ and integer $m \in \naturalNumbersPlus$ such that for all $n \geq m$ and all $i, j \in [n]$, $N_{ij} \leq \mathfrak{n}_2 T_n / n^2$.

  We are going to use \cite[Thm.~3.4]{paulin2015concentration} to prove \refLemma{lemma:light_pairs_bound_minor}; recall \eqref{eqn:Paulins_concentration_inequality}. Observe that the two-dimensional stochastic process $\process{ (X_t, X_{t+1}) }{t \geq 0}$ induced by the transitions of the \gls{BMC} is in fact also a Markov chain. Moreover, the mixing time of $\process{ (X_t, X_{t+1}) }{t \geq 0}$ requires just one more transition than the mixing time of $\process{X_t}{t \geq 0}$. Consequently, for both of these Markov chains $\gamma_{ps} \geq 1/ ( 2(4 \eta + 1) )$ \cite[SM1(26)]{sanders2020clustering}. 

  Let $n \geq m$, $\mathcal{A} \subseteq[n]$ deterministic, and $x, y \in \mathbb{B}_1^n(0)$. Define for $t \in \{ 0, 1, \ldots, T_n-1 \}$
  \begin{equation}
    f_{x,y}^{\mathcal{A}}( (X_t,X_{t+1}) ) 
    = 
    \sum_{(i,j) \in \mathcal{L}^{\mathcal{A}}} 
    x_i y_j \indicator{ X_t = i, X_{t+1} = j }
    \label{eqn:Definition_of_SAt}
  \end{equation}
  such that
  \begin{equation}
    \expectation{ f_{x,y}^{\mathcal{A}}( (X_t,X_{t+1}) ) }
    = 
    \sum_{(i,j) \in \mathcal{L}^{\mathcal{A}}} 
    x_i y_j \Pi_i P_{i,j}
    .
  \end{equation}
  Observe that
  \begin{align}
    L^{\mathcal{A}}(x,y) 
    &
    \eqcom{\ref{eqn:Definition_of_LAxy}}
    =
    \Bigl| 
      \sum_{ (i,j) \in \mathcal{L}^{\mathcal{A}} } x_i y_j (\hat{N}^{\mathcal{A}}_{ij} 
      - N^{\mathcal{A}}_{ij})
    \Bigr| =
    \Bigl| 
    \sum_{(i,j) \in \mathcal{L}^{\mathcal{A}}} 
    x_i y_j \bigl( \hat{N}_{ij} - N_{ij} \bigr) 
    \Bigr| 
    \nonumber \\ & 
    = 
    \Bigl|
    \sum_{t=0}^{T_n-1} \sum_{(i,j) \in \mathcal{L}^{\mathcal{A}}} 
    x_i y_j \Bigl( \indicator{ X_t = i, X_{t+1} = j } - \Pi_i P_{i,j} \Bigr) 
    \Bigr|
    \nonumber \\ &
    = 
    \Bigl|
    \sum_{t=0}^{T_n-1} 
    \bigl( 
      f_{x,y}^{\mathcal{A}}( (X_t,X_{t+1}) )
      - \expectation{ f_{x,y}^{\mathcal{A}}( (X_t,X_{t+1}) ) }
    \bigr)
    \Bigr|
    .
  \end{align}
  This positions us to apply \cite[Thm.~3.4]{paulin2015concentration}. All that remains is to provide bounds on the deviation of $f_{x,y}^{\mathcal{A}}( (X_t,X_{t+1}) )$ from its expectation. We claim that for all $t \in \{ 0, 1, \ldots, T_n - 1 \}$,
  \begin{gather}
    | f_{x,y}^{\mathcal{A}}( (X_t,X_{t+1}) ) - \expectation{ f_{x,y}^{\mathcal{A}}( (X_t,X_{t+1}) ) } | 
    \leq 
    \frac{2}{n} \sqrt{\frac{T_n}{n}}
    ,
    \label{eqn:fxyAs_is_bounded__almost_surely}
    \\
    \variance{ f_{x,y}^{\mathcal{A}}( (X_t,X_{t+1}) ) } 
    \leq 
    \frac{\mathfrak{n}_2}{n^2}
    .
    \label{eqn:fxyAs_is_bounded__variance}
  \end{gather}
  After having established these claims, the result will follow.

\noindent 
\emph{Proof of \refLemma{lemma:light_pairs_bound_minor}, assuming \eqref{eqn:fxyAs_is_bounded__almost_surely} and \eqref{eqn:fxyAs_is_bounded__variance}:}
Applying \cite[Thm.~3.4]{paulin2015concentration}---recall \eqref{eqn:Paulins_concentration_inequality}---with the function $f$ replaced by $f_{x,y}^{\mathcal{A}}$ to the sample path $(X_0,X_1), \ldots, (X_{T_{n-1}},X_{T_n})$ of the stationary two-dimensional Markov chain $\process{ (X_t, X_{t+1}) }{t \geq 0}$ together with \eqref{eqn:fxyAs_is_bounded__almost_surely} and \eqref{eqn:fxyAs_is_bounded__variance} implies that for any $\mathfrak{f} > 0$,
\begin{align}
  &
  \probabilityBig{ 
    \Bigl| 
      \sum_{t=0}^{T_n-1} f_{x,y}^{\mathcal{A}}( (X_t,X_{t+1}) ) - \expectation{ f_{x,y}^{\mathcal{A}}( (X_t,X_{t+1}) ) } 
    \Bigr| 
    > 
    \mathfrak{f} \sqrt{\frac{T_n}{n}} 
  } 
  \nonumber \\ &
  \leq 
  2 \exp{ \Bigl( - \frac{\mathfrak{f}^2 \gamma_{ps} \frac{T_n}{n} }{ 8 (T_n + 1/\gamma_{ps}) \frac{\mathfrak{n}_2}{n^2} + 40 \mathfrak{f} \frac{T_n}{n^2} } \Bigr) }
  =
  2 \exp{ \Bigl( - \frac{\mathfrak{f}^2 \gamma_{ps} n }{ 8 \mathfrak{n}_2 (1 + 1/(\gamma_{ps}T_n) ) + 40 \mathfrak{f} } \Bigr) }
  .
\end{align}
Note that $\gamma_{ps}$ may depend on $n$. Recall therefore that $T_n \geq 1$ and (i) $\gamma_{ps} \geq 1/(2(1+4\eta))$. Consequently $1 + 1/(\gamma_{ps}T_n) \leq 1 + 1/\gamma_{ps} \leq 3 + 8\eta$. The lower bound on the right-hand side is independent of $n$.
We find that 
(ii) for any $\mathfrak{f} \geq 8\mathfrak{n}_2(3 + 8\eta)$,
\begin{align}
  &
  \probabilityBig{ 
    L^{\mathcal{A}}(x,y)
    > 
    \mathfrak{f} \sqrt{\frac{T_n}{n}} 
  }
  \nonumber \\ & 
  \leq 
  2
  \exp{ \Bigl( - \frac{ \mathfrak{f}^2 \gamma_{ps} n }{8\mathfrak{n}_2(3 + 8\eta)+ 40\mathfrak{f}} \Bigr) }  
  \eqcom{ii}
  \leq 
  2
  \exp{ \Bigl( - \frac{ \mathfrak{f} \gamma_{ps} n }{41} \Bigr) }
  \eqcom{i}
  \leq 
  2
  \exp{ \Bigl( - \frac{ \mathfrak{f} n }{82(1+4\eta)} \Bigr) }  
  .
  \label{eqn:Concentration_inequality_on_fxyA_for_one_xy}
\end{align}

Finally use (iii) Boole's inequality together with \eqref{eqn:Concentration_inequality_on_fxyA_for_one_xy}, in combination with (iv) \refLemma{lem:Properties_of_epsilon_nets}\ref{itm:Epsilon_net_induced_by_a_minor} with the assumption $|\mathcal{N}_{\epsilon}| \leq (9/\epsilon)^n$ to conclude that 
(v) for any $\mathfrak{f} \geq \max \{ 328(1+4\eta) \ln{(9/\epsilon)}, 8\mathfrak{n}_2(3+8\eta) \}$,
\begin{align}
  &
  \probabilityBig{ 
    \max_{ x,y \in \mathcal{N}_{\epsilon}} 
    |L^{\mathcal{A}}(x,y)| > \mathfrak{f} \sqrt{\frac{T_n}{n}} 
  }
  \eqcom{iii}
  \leq 
  |\mathcal{N}_{\epsilon}|^2 
  \cdot 
  2 
  \exp{ \Bigl( - \frac{ \mathfrak{f} n }{82(1+4\eta)} \Bigr) } 
  \nonumber \\ &
  \eqcom{iv} 
  \leq 
  2 
  \exp{ \Bigl( \Bigl( 2 \ln{\frac{9}{\epsilon}} - \frac{ \mathfrak{f} }{82(1+4\eta)} \Bigr) n \Bigr) }
  \eqcom{v}
  \leq 
  2 
  \exp{ \Bigl( - \frac{ \mathfrak{f} n }{164(1+4\eta)} \Bigr) } 
  .
\end{align}
This establishes \refLemma{lemma:light_pairs_bound_minor} under the assumption of \eqref{eqn:fxyAs_is_bounded__almost_surely} and \eqref{eqn:fxyAs_is_bounded__variance}.
All that remains is to prove \eqref{eqn:fxyAs_is_bounded__almost_surely} and \eqref{eqn:fxyAs_is_bounded__variance}.

\noindent
\emph{Proof of \eqref{eqn:fxyAs_is_bounded__almost_surely}:}
  Let $t \in \{ 0, 1, \ldots, T_n-1 \}$. 
  We have that
  \begin{align}
    &
    \bigl| 
      f_{x,y}^{\mathcal{A}}( (X_t,X_{t+1}) ) - \expectation{ f_{x,y}^{\mathcal{A}}( (X_t,X_{t+1}) ) } 
    \bigr|
    \eqcom{\ref{eqn:Definition_of_SAt}}
    =
    \Bigl|
      \sum_{(i,j) \in \mathcal{L}^{\mathcal{A}}} 
      x_i y_j \Bigl( \indicator{ X_t = i, X_{t+1} = j } - \Pi_i P_{i,j} \Bigr)
    \Bigr| 
    \nonumber \\ &
    \leq 
    \sup_{(i,j) \in \mathcal{L}^{\mathcal{A}}} 
    \bigl\{ |x_i y_j| \bigr\} 
    \cdot 
    \Bigl( 1 + \sum_{(i,j) \in [n]^2} \Pi_i P_{i,j} \Bigr) 
    \eqcom{i}
    \leq 
    \frac{2}{n} \sqrt{\frac{T_n}{n}}
  \end{align}
  almost surely. Here, we (i) used the facts that $(i,j) \in \mathcal{L}^{\mathcal{A}}$ and $\sum_{(i,j) \in [n]^2} \Pi_i P_{i,j} = 1$. This establishes \eqref{eqn:fxyAs_is_bounded__almost_surely}.

  \noindent
  \emph{Proof of \eqref{eqn:fxyAs_is_bounded__variance}:}
  Let $t \in \{ 0, 1, \ldots, T_n-1 \}$. Observe that
  \begin{align}
    \variance{ f_{x,y}^{\mathcal{A}}( (X_t,X_{t+1}) ) }
    &
    \leq 
    \expectationBig{ \Bigl( \sum_{(i,j) \in \mathcal{L}^{\mathcal{A}}} 
    x_i y_j \indicator{ X_t = i, X_{t+1} = j } \Bigr)^2 }
    \nonumber \\ & 
    = 
    \expectationBig{ \sum_{(i,j) \in \mathcal{L}^{\mathcal{A}}} \sum_{(k,l) \in \mathcal{L}^{\mathcal{A}}}
    x_i y_j x_k y_l \indicator{ X_t = i, X_{t+1} = j } \indicator{ X_t = k, X_{t+1} = l } }    
    \nonumber \\ & 
    = 
    \expectationBig{ \sum_{(i,j) \in \mathcal{L}^{\mathcal{A}}} 
    |x_i y_j|^2 \indicator{ X_t = i, X_{t+1} = j } }
    \nonumber \\ &
    = 
    \sum_{(i,j) \in \mathcal{L}^{\mathcal{A}}} 
    |x_i y_j|^2 \Pi_i P_{ij}  
    \leq 
    \max_{(i,j) \in [n]^2} 
    \bigl\{ \Pi_i P_{ij} \bigr\}
    \sum_{(i,j) \in [n]^2} 
    |x_i y_j|^2  
    .
    \label{eqn:Intermediate__Bound_on_Var_SAt}
  \end{align}
  Recall now that $x, y \in \mathbb{B}_1^{n}(0)$. This implies that
  $
    \sum_{(i,j) \in [n]^2} 
    | x_i y_j |^2
    =
    \sum_{(i,j) \in [n]^2} 
    x_i^2 y_j^2
    = 
    \sum_{i \in [n]} x_i^2 
    \cdot
    \sum_{j \in [n]} y_j^2 
    \leq
    1
    .
  $
  Furthermore, observe that \refLemma{lem:Asymptotic_upper_bound_on_Nxy} implies
  $
    \Pi_i P_{ij}
    = 
    N_{ij} / T_n
    \leq 
    \mathfrak{n}_2 / n^2
  $.
  Use these two facts to bound \eqref{eqn:Intermediate__Bound_on_Var_SAt}. This proves \eqref{eqn:fxyAs_is_bounded__variance}.
  
  This completes the proof of \refLemma{lemma:light_pairs_bound_minor}.
\end{proof}

\paragraph{Step 2: Passing to a random minor.} 
We have proven in \refLemma{lemma:light_pairs_bound_minor} that the contribution of light pairs of a minor induced by a deterministic subset $\mathcal{A}$ contributes at most $\bigOP{\sqrt{T_n/n}}$ with high probability. We will now prove that this is also the case for all subsets of size $| \Gamma | = n - \lfloor n \e{-T_n/n} \rfloor$ simultaneously:

\begin{proposition}
  \label{prop:L1_is_bounded_almost_surely}

  There exists a constant $\mathfrak{n}_2 > 0$ independent of $n$ and an integer $m \in \naturalNumbersPlus$ 
  such that 
  for all $n \geq m$, 
  any $\epsilon$-net $\mathcal{N}_\epsilon$ for $(\mathbb{B}_1^n(0),\pnorm{\cdot}{2})$ with cardinality at most $(9/\epsilon)^n$, such as $\mathcal{T}_\epsilon$,
  and all $\mathfrak{l}_1 \geq \max{} \{ 656(1+4\eta) \ln{2}, 328(1+4\eta) \ln{(9/\epsilon)}, 8\mathfrak{n}_2(3+8\eta) \}$,
  \begin{equation}
    \probabilityBig{
      \max_{x,y  \in \mathcal{N}_{\epsilon}} |L_1(x,y)| 
      \geq 
      \mathfrak{l}_1
      \sqrt{\frac{T_n}{n}}
    }
    \leq
    2 
    \exp{ \Bigl( - \frac{ \mathfrak{l}_1 n }{328(1+4\eta)} \Bigr) } 
    . 
  \end{equation}
\end{proposition}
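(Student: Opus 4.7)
\noindent
The plan is to reduce the random-minor statement to the deterministic-minor estimate of \refLemma{lemma:light_pairs_bound_minor} by means of Boole's inequality. The key observation is that $L_1(x,y) = L^{\Gamma}(x,y)$ almost surely, which follows from combining \eqref{eqn:Split_contribution_L1_L2_in_light_pairs} with \eqref{eqn:Definition_of_LAxy}, and crucially that $|\Gamma| = n - \lfloor n \e{-T_n/n}\rfloor$ is deterministic even though the content of $\Gamma$ is random. Enumerating over the possible realizations of $\Gamma$, one immediately obtains the set inclusion
\begin{equation*}
  \Bigl\{ \max_{x,y \in \mathcal{N}_\epsilon} |L_1(x,y)| \geq \mathfrak{l}_1 \sqrt{T_n/n} \Bigr\}
  \subseteq
  \bigcup_{\mathcal{A} \subseteq [n]:\, |\mathcal{A}| = |\Gamma|} \Bigl\{ \max_{x,y \in \mathcal{N}_\epsilon} |L^{\mathcal{A}}(x,y)| \geq \mathfrak{l}_1 \sqrt{T_n/n} \Bigr\}.
\end{equation*}

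Next I would apply Boole's inequality together with \refLemma{lemma:light_pairs_bound_minor} to each deterministic $\mathcal{A}$ appearing in the union. The number of subsets of $[n]$ of cardinality $|\Gamma|$ is $\binom{n}{\lfloor n \e{-T_n/n}\rfloor} \leq 2^n$, and the lower bound imposed on $\mathfrak{l}_1$ guarantees that the hypothesis on $\mathfrak{f}$ in \refLemma{lemma:light_pairs_bound_minor} is satisfied. This yields
\begin{equation*}
  \probabilityBig{ \max_{x,y \in \mathcal{N}_\epsilon} |L_1(x,y)| \geq \mathfrak{l}_1 \sqrt{T_n/n} }
  \leq 2^n \cdot 2 \exp\Bigl( - \frac{\mathfrak{l}_1 n}{164(1+4\eta)} \Bigr).
\end{equation*}
To obtain the claimed bound one merely absorbs the $2^n = \exp(n \ln 2)$ factor into the exponent: since $\mathfrak{l}_1 \geq 656(1+4\eta)\ln 2$, the inequality $n \ln 2 \leq \mathfrak{l}_1 n / (328(1+4\eta))$ holds, so the combined exponent is at worst $-\mathfrak{l}_1 n / (328(1+4\eta))$, which matches the statement.

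The main obstacle is really conceptual rather than technical: because $\Gamma$ is a function of $\hat{N}$, one cannot simply plug $\mathcal{A} = \Gamma$ into \refLemma{lemma:light_pairs_bound_minor} (the lemma requires $\mathcal{A}$ to be deterministic, so that the martingale-style concentration of \cite{paulin2015concentration} applies cleanly). Decoupling the randomness of $\Gamma$'s content from that of $\hat{N}$ via a union bound over all subsets of the deterministic size $|\Gamma|$ costs at most a combinatorial factor $2^n$, which is comfortably absorbed by the linear-in-$n$ concentration rate of \refLemma{lemma:light_pairs_bound_minor}---itself inherited from the $\Theta(1)$ mixing time of the \gls{BMC}.
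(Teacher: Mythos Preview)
Your proposal is correct and follows essentially the same approach as the paper: both reduce to \refLemma{lemma:light_pairs_bound_minor} via a union bound over the possible realizations of $\Gamma$, exploit that $|\Gamma|$ is deterministic, bound the number of candidate subsets crudely by $2^n$, and absorb this factor using the assumed lower bound on $\mathfrak{l}_1$. The only cosmetic difference is that the paper unions over all subsets of size at least $(1-\delta)n$ rather than exactly $|\Gamma|$, but since both arguments then use the bound $2^n$ anyway, this makes no difference.
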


\begin{proof}
  Recall \refLemma{lemma:light_pairs_bound_minor}: there exist a constant $\mathfrak{n}_2 > 0$ independent of $n$ and an integer $m \in \naturalNumbersPlus$ such that its clauses hold.

  Let $n \geq m$. To prove \refProposition{prop:L1_is_bounded_almost_surely}, we proceed as in \cite{feige2005spectral}. Define for $\delta \in (0,1)$,
  \begin{equation}
    \mathcal{M}_{n,\delta}
    = 
    \bigl\{
      \mathcal{A} \in \mathcal{P}([n])
      \bigm\vert
      (1-\delta)n
      \leq
      | \mathcal{A} | 
      \leq n
    \bigr\}
    .
  \end{equation}
  Here, $\mathcal{P}([n])$ denotes the power set of $[n]$. 
  Using (i) Boole's inequality, (ii) \refLemma{lemma:light_pairs_bound_minor}, and (iii) $| \mathcal{M}_{n,\delta} | \leq 2^n$ independently of $\delta$ yields that (iv) for all $\mathfrak{l}_1 \geq \max{} \{ 656(1+4\eta) \ln{2}, 328(1+4\eta) \ln{(9/\epsilon)}, 8\mathfrak{n}_2(3+8\eta) \}$,
  \begin{align}
    &
    \probabilityBig{
      \max_{\mathcal{A} \in M_{n,\delta}} 
      \max_{x,y \in \mathcal{N}^{\mathcal{A}}_\epsilon} 
      |L^{\mathcal{A}}(x,y)| 
      >
      \mathfrak{l}_1 \sqrt{\frac{T_n}{n}} 
    }
    \eqcom{i}
    \leq
    \sum_{\mathcal{A} \in M_{n,\delta}}
    \probabilityBig{ 
      \max_{x,y \in \mathcal{N}^{\mathcal{A}}_\epsilon} 
      |L^{\mathcal{A}}(x,y)| 
      > 
      \mathfrak{l}_1 \sqrt{\frac{T_n}{n}} 
    }
    \\ &
    \eqcom{ii}
    \leq
    |M_{n,\delta}| 
    \cdot
    2 
    \exp{ \Bigl( - \frac{ \mathfrak{l}_1 n }{164(1+4\eta)} \Bigr) } 
    \eqcom{iii}
    \leq 
    2 
    \exp{ \Bigl( \Bigl( \ln{2} - \frac{ \mathfrak{l}_1 n }{164(1+4\eta)}  \Bigr) n \Bigr) }
    \eqcom{iv}
    \leq 
    2 
    \exp{ \Bigl( - \frac{ \mathfrak{l}_1 n }{328(1+4\eta)} \Bigr) }
    .
    \nonumber
  \end{align}
  Recalling (v) that $L_1(x,y) = L^\Gamma(x,y)$ almost surely completes the proof, because (vi) there exists a $\delta \in (0,1)$ such that the event
  \begin{align}
    \Bigl\{
      \max_{x,y \in \mathcal{N}_\epsilon} 
      L_1(x,y)
      > 
      \mathfrak{l}_1 \sqrt{\frac{T_n}{n}}    
    \Bigr\}
    &
    \eqcom{v}
    =
    \Bigl\{
      \max_{x,y \in \mathcal{N}_\epsilon} 
      L^{\Gamma}(x,y)
      > 
      \mathfrak{l}_1 \sqrt{\frac{T_n}{n}}    
    \Bigr\}
    \nonumber \\ &
    \eqcom{vi}
    \subseteq 
    \Bigl\{
      \max_{\mathcal{A} \in M_{n,\delta}} 
      \max_{x,y \in \mathcal{N}_\epsilon} 
      |L^{\mathcal{A}}(x,y)| 
      >
      \mathfrak{l}_1 \sqrt{\frac{T_n}{n}}    
    \Bigr\}
    .
  \end{align}
  This is because $|\Gamma| = n - \lfloor n \e{-T_n/n} \rfloor$. That is it.
\end{proof}

\subsubsection{Bounding \texorpdfstring{$L_2(x,y)$}{L2(x,y)}}

\begin{proposition}
  \label{prop:L2_is_bounded_almost_surely}

  There exists a constant $\mathfrak{l}_2 > 0$ independent of $n$ and an integer $m \in \naturalNumbersPlus$ 
  such that 
  for all $n \geq m$,
  and any $\epsilon$-net $\mathcal{N}_{\epsilon}$ for $(\mathbb{B}_1^n(0),\pnorm{\cdot}{2})$ with cardinality at most $(9/\epsilon)^n$, such as $\mathcal{T}_\epsilon$,  
  $
    \probability{ 
      \max_{x,y \in \mathcal{N}_\epsilon}
      L_2(x,y) 
      \geq 
      \mathfrak{l}_2 \sqrt{T_n/n} 
    }
    = 
    0
  $.
\end{proposition}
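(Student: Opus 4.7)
The plan is to show that $L_2(x,y)$ can be bounded deterministically (i.e.\ pointwise over the sample path), so that the probability statement is trivial once the constant $\mathfrak{l}_2$ is chosen large enough. The three ingredients are: the definition of $\mathcal{L}$, which gives $|x_i y_j| \leq (1/n)\sqrt{T_n/n}$ for $(i,j) \in \mathcal{L}$; \refLemma{lem:Asymptotic_upper_bound_on_Nxy}, which gives $N_{ij} \leq \mathfrak{n}_2 T_n / n^2$ uniformly; and the fact that the trimmed set has a \emph{deterministic} cardinality $|\Gamma^{\mathrm{c}}| = \lfloor n \e{-T_n/n} \rfloor \leq n \e{-T_n/n}$ (only its content is random). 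The set $\mathcal{K}^{\mathrm{c}} = (\Gamma^{\mathrm{c}} \times [n]) \cup ([n] \times \Gamma^{\mathrm{c}})$ therefore has cardinality at most $2n | \Gamma^{\mathrm{c}} |$.

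Combining these, for every $x, y \in \mathbb{B}_1^n(0)$ and every realization of the sample path,
\begin{align}
  L_2(x,y)
  &
  \leq
  \sum_{(i,j) \in \mathcal{L} \cap \mathcal{K}^{\mathrm{c}}} |x_i y_j| N_{ij}
  \leq
  | \mathcal{K}^{\mathrm{c}} | \cdot \frac{1}{n}\sqrt{\frac{T_n}{n}} \cdot \mathfrak{n}_2 \frac{T_n}{n^2}
  \nonumber \\ &
  \leq
  2 n \cdot n \e{-T_n/n} \cdot \frac{1}{n}\sqrt{\frac{T_n}{n}} \cdot \mathfrak{n}_2 \frac{T_n}{n^2}
  =
  2 \mathfrak{n}_2 \frac{T_n}{n} \e{-T_n/n} \sqrt{\frac{T_n}{n}}
  .
\end{align}
The elementary inequality $u \e{-u} \leq 1/\e$ for $u > 0$, applied with $u = T_n/n$, then gives $L_2(x,y) \leq (2\mathfrak{n}_2/\e) \sqrt{T_n/n}$ deterministically, uniformly over $x, y$.

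Choosing any constant $\mathfrak{l}_2 > 2\mathfrak{n}_2/\e$ (for instance $\mathfrak{l}_2 = 2\mathfrak{n}_2/\e + 1$) and the same integer $m$ as in \refLemma{lem:Asymptotic_upper_bound_on_Nxy}, we obtain for all $n \geq m$ that the event $\{ \max_{x,y \in \mathcal{N}_\epsilon} L_2(x,y) \geq \mathfrak{l}_2 \sqrt{T_n/n} \}$ is empty, so its probability is $0$. Note in particular that the bound holds for \emph{any} $\epsilon$-net $\mathcal{N}_\epsilon$ and does not even use the cardinality restriction $|\mathcal{N}_\epsilon| \leq (9/\epsilon)^n$; that hypothesis is included only for consistency with the rest of the argument.

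The reason no real obstacle arises here is that the exponential factor $\e{-T_n/n}$ in the size of $\Gamma^{\mathrm{c}}$ exactly cancels the growth of $T_n/n$ one picks up from bounding entries of $N$ and summing over $\Gamma^{\mathrm{c}}$. In other words, the trimming threshold has been tuned precisely so that the ``expected'' contribution coming from trimmed states is already at most the target order $\sqrt{T_n/n}$, rendering a concentration argument unnecessary for $L_2$.
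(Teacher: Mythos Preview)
Your proof is correct and essentially identical to the paper's own argument: both bound $|x_iy_j|$ by the light-pair threshold, $N_{ij}$ by $\mathfrak{n}_2 T_n/n^2$, and $|\mathcal{K}^{\mathrm{c}}|$ by $2n|\Gamma^{\mathrm{c}}| \leq 2n^2 \e{-T_n/n}$, then use $u\e{-u} = O(1)$ to conclude the bound holds deterministically. Your constant is even slightly sharper ($2\mathfrak{n}_2/\e$ versus the paper's $2\mathfrak{n}_2$), and your observation that the cardinality bound on $\mathcal{N}_\epsilon$ is unused is correct.
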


\begin{proof}
  Recall \refLemma{lem:Asymptotic_upper_bound_on_Nxy}: there exists a constant $\mathfrak{n}_2 > 0$ and integer $m \in \naturalNumbersPlus$ such that for all $n \geq m$ and all $i, j \in [n]$, $N_{ij} \leq \mathfrak{n}_2 T_n / n^2$.

  Let $n \geq m$ and $x, y \in \mathbb{B}_1^n(0)$.
  By 
  (i) the triangle inequality, 
  (ii) $\mathcal{L}$'s definition in \eqref{eqn:Definition__Set_of_light_pairs} and the fact that $\mathcal{L} \cap \mathcal{K}^{\mathrm{c}} \subseteq \mathcal{K}^{\mathrm{c}}$, 
  (iii) expanding the maximization range from $\mathcal{K}^{\mathrm{c}}$ to $[n]^2$, 
  (iv) the bound $|\mathcal{K}^{\mathrm{c}}| \leq 2 n |\Gamma^{\mathrm{c}}|$ and \refLemma{lem:Asymptotic_upper_bound_on_Nxy},
  (v) $| \Gamma^c | = \lfloor n \e{-T_n/n} \rfloor \leq n \e{-T_n/n}$, and finally 
  (vi) for $z \geq 0$, $z \e{-z} \leq 1$, 
  we obtain that
  \begin{align}
    \Bigl|
      \sum_{ (i,j) \in \mathcal{L} \cap \mathcal{K}^{\mathrm{c}} }
      x_i y_j N_{ij}
    \Bigr| 
    &
    \eqcom{i}
    \leq
    \sum_{ (i,j) \in \mathcal{L} \cap \mathcal{K}^{\mathrm{c}} } 
    | x_i y_j | N_{ij} 
    \eqcom{ii}
    \leq
    \frac{1}{n} \sqrt{\frac{T_n}{n}}
    \cdot
    \sum_{ (i,j) \in \mathcal{K}^{\mathrm{c}} } 
    N_{ij}    
    \eqcom{iii}
    \leq
    \frac{1}{n} \sqrt{\frac{T_n}{n}} 
    \cdot 
    |\mathcal{K}^{\mathrm{c}}| 
    \max_{(i,j) \in [n]} \{ N_{ij} \} 
    \nonumber \\ & 
    \eqcom{iv} 
    \leq 
    \frac{1}{n} \sqrt{\frac{T_n}{n}} 
    \cdot 
    2 \mathfrak{n}_2 |\Gamma^{\mathrm{c}}| \frac{T_n}{n} 
    \eqcom{v}
    \leq 
    2 \mathfrak{n}_2 \sqrt{\frac{T_n}{n}} \cdot \frac{T_n}{n} \e{ - \frac{T_n}{n} }
    \eqcom{vi}
    \leq 
    2 \mathfrak{n}_2 \sqrt{\frac{T_n}{n}}
    \label{eqn:bound_L_2_K_part}
  \end{align}
  almost surely. 
  
  The proposition follows after an application Boole's inequality: for any $\mathfrak{l}_2 > 2 \mathfrak{n}_2$ independent of $n$, 
  \begin{equation}
    \probabilityBig{
      \max_{x,y \in \mathcal{N}_\epsilon} L_2(x,y)
      \geq 
      \mathfrak{l}_2 \sqrt{\frac{T_n}{n}}
    }
    \leq 
    \sum_{x,y \in \mathcal{N}_\epsilon}
    \probabilityBig{
      L_2(x,y)
      \geq 
      \mathfrak{l}_2 \sqrt{\frac{T_n}{n}}
    }
    \eqcom{\ref{eqn:bound_L_2_K_part}}
    =
    \Bigl( \frac{4}{\epsilon} \Bigr)^n 
    \cdot 
    0
    = 
    0
    .
  \end{equation}
  That is it.
\end{proof}

\subsection{Bounding the contribution of the heavy pairs}

We split the bounding of $H(x,y)$ into two parts too, using the triangle inequality: let
\begin{align}
  H(x,y) 
  &
  \eqcom{\ref{eqn:Split_into_contributions_of_light_and_heavy_pairs}}
  = 
  \Bigl| 
    \sum_{(i,j)\in \mathcal{L}^{\mathrm{c}}} 
    x_i y_j \bigl( (\hat{N}_{\Gamma})_{ij} - N_{ij} \bigr)
  \Bigr| 
  \nonumber \\ & 
  \leq
  \Bigl| 
    \sum_{(i,j)\in \mathcal{L}^{\mathrm{c}}} 
    x_i y_j (\hat{N}_{\Gamma})_{ij}
  \Bigr|
  +
  \Bigl|
  \sum_{(i,j)\in \mathcal{L}^{\mathrm{c}}} 
  x_i y_j N_{ij}
  \Bigr|  
  \nonumber \\ & 
  =
  H_1(x,y) + H_2(x,y)
  .
  \label{eqn:Split_contribution_H1_H2_in_heavy_pairs}
\end{align}

\subsubsection{Bounding \texorpdfstring{$H_1(x,y)$}{H1(x,y)}}

To bound the contribution of heavy pairs, we will follow the proof approaches in \cite{feige2005spectral,keshavan2010matrix} and specifically adapt \cite[Appendix C]{keshavan2010matrix}. Our primary modifications are to find the right asymptotic scalings, discrepancy property, and bounded degree property (such that they can be applied to $\hat{N}_\Gamma$, which enumerates the visits of a Markov chain instead of the degrees of a random graph produced by e.g.\ the \gls{SBM}). The discrepancy property and bounded degree properties will ultimately be guaranteed using a concentration inequality for Markov chains; recall also \refLemma{lem:Bounded_degree_property} and \refProposition{prop:discrepancy_property_N_hat_holds}. 
Because the proof of the following proposition follows similar arguments as in \cite{feige2005spectral, keshavan2010matrix}, we relegate the proof in \refAppendixSection{secappendix:proof_H1_is_bounded}.

\begin{proposition}
  \label{prop:H1_is_bounded_whp_if_the_discrepancy_property_holds}

  If
  $
    \max_{y \in \Gamma}
    \bigl\{
    \hat{N}_{\Gamma,y} \vee \hat{N}_{y,\Gamma}
    \bigr\}
    \leq
    \mathfrak{b}_2 T_n/n
  $ 
  and $\hat{N}_{\Gamma}$ satisfies the discrepancy property in \refDefinition{def:Discrepancy_property}, then there exists a constant $\mathfrak{h}_1 > 0$ independent of $n$ and an integer $m \in \naturalNumbersPlus$ such that for all $n \geq m$,
  \begin{equation}
   \max_{x,y \in \mathcal{T}_{\epsilon}}
      H_1(x,y) 
      \leq
      \mathfrak{h}_1 \sqrt{ \frac{T_n}{n} }
      .
  \end{equation}
\end{proposition}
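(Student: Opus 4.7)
The plan is to adapt the heavy-pair analysis of Feige and Ofek \cite{feige2005spectral} and Keshavan, Montanari, and Oh \cite{keshavan2010matrix} to the matrix $\hat{N}_\Gamma$, which, unlike the random graph adjacency matrices they treat, carries the frequency counts of a Markov chain. First I would fix $x, y \in \mathcal{T}_\epsilon$ and bucket the coordinates dyadically: for integers $s, t \geq 0$, set
$$I_s = \bigl\{ i \in \Gamma : |x_i| \in [2^{s}/\sqrt{n}, 2^{s+1}/\sqrt{n}) \bigr\}, \qquad J_t = \bigl\{ j \in \Gamma : |y_j| \in [2^{t}/\sqrt{n}, 2^{t+1}/\sqrt{n}) \bigr\}.$$
The constraints $\pnorm{x}{2}, \pnorm{y}{2} \leq 1$ give $|I_s| \leq n/4^s$ and $|J_t| \leq n/4^t$, and because entries of vectors in $\mathcal{T}_\epsilon$ live on a grid of spacing $\epsilon/\sqrt{n}$ there is an upper cutoff $s, t = O(\log n)$. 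Only blocks satisfying the heavy-pair condition $|x_i y_j| > (1/n)\sqrt{T_n/n}$ contribute to $H_1$, which forces $s + t \geq s_0 := \lceil \tfrac{1}{2} \log_2(T_n/n) \rceil$, and on block $I_s \times J_t$ one has $|x_i y_j| < 4 \cdot 2^{s+t}/n$, so
$$H_1(x,y) \leq \sum_{s, t \geq 0,\ s+t \geq s_0} \frac{4 \cdot 2^{s+t}}{n}\, e_\Gamma(I_s, J_t).$$

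The core of the proof is to bound $e_\Gamma(I_s, J_t)$ in each block by combining the discrepancy property (\refDefinition{def:Discrepancy_property}) with the bounded-degree hypothesis. I would sort the blocks into three groups. In blocks where alternative~\ref{itm:Discrepancy_property__i} of the discrepancy definition is active, direct substitution gives $e_\Gamma(I_s, J_t) \leq \mathfrak{d}_1 |I_s||J_t| T_n/n^2$. In blocks where alternative~\ref{itm:Discrepancy_property__ii} holds and moreover $e_\Gamma(I_s, J_t) > e^{c} \mu(I_s, J_t)$ for a fixed absolute constant $c$, the logarithm on the left of~\ref{itm:Discrepancy_property__ii} is bounded below by $c$, yielding $e_\Gamma(I_s, J_t) = O\bigl( (|I_s| \vee |J_t|) \log(n/(|I_s| \vee |J_t|)) \bigr)$, independently of $T_n$. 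In blocks where instead one of $|I_s|$ or $|J_t|$ is of order one, the degree hypothesis gives the sharper $e_\Gamma(I_s, J_t) \leq (|I_s| \wedge |J_t|)\, \mathfrak{b}_2 T_n/n$. The remaining sub-case where $e_\Gamma(I_s, J_t) \leq e^c \mu(I_s, J_t)$ is absorbed by the first group up to a constant.

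The main obstacle will be summing these three per-block bounds over $(s,t)$ with $s + t \geq s_0$ and showing that the total is $O(\sqrt{T_n/n})$ despite the apparent logarithmic factors. The essential tool is the inequality $|I_s| \leq n/4^s$ (and similarly for $J_t$), which converts $\log(n/(|I_s| \vee |J_t|))$ into an at-most linear factor in $s \vee t$ that is then dominated geometrically by the weight $2^{s+t}$. One has to split the $(s,t)$-plane carefully into sub-regions according to which of the three estimates is binding; the partition, essentially that of \cite[Appendix C]{keshavan2010matrix}, is calibrated so that the geometric decay beats the linear logarithmic loss in each sub-region. Once each sub-region's contribution is bounded by $O(\sqrt{T_n/n})$ with constants depending only on $\mathfrak{d}_1, \mathfrak{d}_2, \mathfrak{b}_2$ and the $\epsilon$-net parameter, I would take $\mathfrak{h}_1$ to be the sum of these constants. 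Uniformity in $x, y \in \mathcal{T}_\epsilon$ is automatic since the argument is purely deterministic once the discrepancy and bounded-degree events on $\hat{N}_\Gamma$ are in force.
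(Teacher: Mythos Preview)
Your proposal is correct and matches the paper's approach: dyadic bucketing of the coordinates, a case split on which alternative of the discrepancy property is active, and the bounded-degree hypothesis for the residual regime, all following \cite{feige2005spectral,keshavan2010matrix}. Be aware that the actual partition of the discrepancy-\ref{itm:Discrepancy_property__ii} region is finer than your three-group sketch---the paper uses five sub-cases $\mathcal{D}_1,\ldots,\mathcal{D}_5$ indexed by whether $\sigma_{ij}\le 1$, whether $2^i>2^j\sqrt{T_n/n}$, and the relative sizes of $\ln(e_{ij}/\mu_{ij})$, $\ln 2^{2j}$, and $-\ln\beta_j$, and the degree bound is invoked on the condition $2^i>2^j\sqrt{T_n/n}$ rather than on a bucket being ``of order one''---but you correctly anticipate this by deferring to the partition of \cite[Appendix~C]{keshavan2010matrix}.
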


\subsubsection{Bounding \texorpdfstring{$H_2(x,y)$}{H2(x,y)}}

\begin{proposition}
  \label{prop:H2_is_bounded_almost_surely}

  There exists a constant $\mathfrak{h}_2 > 0$ independent of $n$ and an integer $m \in \naturalNumbersPlus$ 
  such that 
  for all $n \geq m$, 
  and any $\epsilon$-net $\mathcal{N}_{\epsilon}$ for $(\mathbb{B}_1^n(0),\pnorm{\cdot}{2})$ with cardinality at most $(9/\epsilon)^n$, such as $\mathcal{T}_\epsilon$,   
  $
    \probability{ 
      \max_{x,y \in \mathcal{N}_\epsilon}
      H_2(x,y) 
      \geq 
      \mathfrak{h}_2 \sqrt{T_n/n}
    }
    = 
    0
  $.
\end{proposition}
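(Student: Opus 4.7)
The plan is to show that $H_2(x,y) \leq \mathfrak{h}_2 \sqrt{T_n/n}$ holds deterministically for every pair $x, y \in \mathbb{B}_1^n(0)$ once $n$ is large enough. Consequently, the probability in the statement is literally zero for any $\mathfrak{h}_2$ exceeding an explicit constant, and no union bound over $\mathcal{N}_\epsilon$ is needed.

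First, I would apply the triangle inequality together with \refLemma{lem:Asymptotic_upper_bound_on_Nxy}, which provides the uniform bound $N_{ij} \leq \mathfrak{n}_2 T_n / n^2$ for $n \geq m$ and all $i,j \in [n]$. This yields
\begin{equation*}
  H_2(x,y)
  \leq
  \sum_{(i,j) \in \mathcal{H}(x,y)} |x_i y_j| N_{ij}
  \leq
  \mathfrak{n}_2 \frac{T_n}{n^2} \sum_{(i,j) \in \mathcal{H}(x,y)} |x_i y_j|,
\end{equation*}
reducing the problem to bounding $\sum_{\mathcal{H}(x,y)} |x_i y_j|$.

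The second step exploits the defining property of heavy pairs, namely $|x_i y_j| > (1/n)\sqrt{T_n/n}$ on $\mathcal{H}(x,y)$. Dividing by this threshold and multiplying by $|x_i y_j|^2$, one gets the pointwise inequality $|x_i y_j| \leq n |x_i y_j|^2 / \sqrt{T_n/n}$ on $\mathcal{H}(x,y)$. Summing and then enlarging the summation range to all of $[n]^2$ gives
\begin{equation*}
  \sum_{(i,j) \in \mathcal{H}(x,y)} |x_i y_j|
  \leq
  \frac{n}{\sqrt{T_n/n}} \sum_{(i,j) \in [n]^2} x_i^2 y_j^2
  =
  \frac{n}{\sqrt{T_n/n}} \pnorm{x}{2}^2 \pnorm{y}{2}^2
  \leq
  \frac{n}{\sqrt{T_n/n}},
\end{equation*}
since $x, y \in \mathbb{B}_1^n(0)$.

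Substituting this into the first display yields $H_2(x,y) \leq \mathfrak{n}_2 \sqrt{T_n/n}$ for every $x, y \in \mathbb{B}_1^n(0)$, so picking any $\mathfrak{h}_2 > \mathfrak{n}_2$ completes the argument. The proof directly parallels the $L_2(x,y)$ bound of \refProposition{prop:L2_is_bounded_almost_surely}, with the twist that heavy pairs are handled by trading one factor of $|x_i y_j|$ for $|x_i y_j|^2$ via the lower threshold, whereas light pairs were handled by replacing $|x_i y_j|$ with the upper threshold. I do not anticipate any genuine obstacle: the argument is a short deterministic calculation and the randomness of the \gls{BMC} enters only indirectly through the deterministic entrywise bound on $N$.
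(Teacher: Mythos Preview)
Your proposal is correct and follows essentially the same approach as the paper: both use $N_{ij} \leq \mathfrak{n}_2 T_n/n^2$ from \refLemma{lem:Asymptotic_upper_bound_on_Nxy}, then control $\sum_{\mathcal{L}^{\mathrm{c}}} |x_i y_j|$ by using the heavy-pair lower bound $|x_i y_j| > (1/n)\sqrt{T_n/n}$ to trade one factor of $|x_i y_j|$ for $|x_i y_j|^2$ and invoke $\pnorm{x}{2}^2 \pnorm{y}{2}^2 \leq 1$. The paper concludes with a cosmetic union bound over $\mathcal{N}_\epsilon$ (each summand being zero), whereas you correctly observe this is unnecessary since the bound is deterministic for all $x,y \in \mathbb{B}_1^n(0)$.
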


\begin{proof}
  Recall \refLemma{lem:Asymptotic_upper_bound_on_Nxy}: there exists a constant $\mathfrak{n}_2 > 0$ and integer $m \in \naturalNumbersPlus$ such that for all $n \geq m$ and all $i, j \in [n]$, $N_{ij} \leq \mathfrak{n}_2 T_n / n^2$.
  Let $n \geq m$ and $x, y \in \mathbb{B}_1^n(0)$.
  Observe that
  \begin{equation}
    H_2(x,y)
    =
    \Bigl|
      \sum_{(i,j) \in \mathcal{L}^{\mathrm{c}}}
      x_i y_j N_{ij}
    \Bigr|
    \leq
    \max_{(i,j) \in [n]^2} \{ N_{ij} \}
    \sum_{(i,j) \in \mathcal{L}^{\mathrm{c}}} | x_i y_j |
    \leq
    \mathfrak{n}_2 \frac{T_n}{n^2}
    \sum_{(i,j) \in \mathcal{L}^{\mathrm{c}}} | x_i y_j |
    .
    \label{eqn:Intermediate__Bound_on_H2_after_triangle_inequality_and_maximization}
  \end{equation}
  almost surely.
  Because (i) $x, y \in \mathbb{B}_1^n(0)$, and (ii) $x, y \in \mathcal{L}^c$, the inequalities
  \begin{equation}
    1
    \eqcom{i}
    \geq
    \sum_{(i,j) \in [n]^2} x_i^2 y_j^2
    =
    \Bigl( \sum_{(i,j) \in \mathcal{L}} + \sum_{(i,j) \in \mathcal{L}^c} \Bigr)
    x_i^2 y_j^2
    \geq 
    \sum_{(i,j) \in \mathcal{L}^c} | x_i y_j | | x_i y_j |
    \eqcom{ii}
    >
    \frac{1}{n} \sqrt{\frac{T_n}{n}}
    \sum_{(i,j) \in \mathcal{L}^c} | x_i y_j |
    ,
  \end{equation}
  are satisfied. This yields 
  \begin{equation}
    \sum_{(i,j) \in \mathcal{L}^{\mathrm{c}}} | x_i y_j | 
    < 
    n \sqrt{\frac{n}{T_n}}  
    .
    \label{eqn:Sum_over_heavy_pairs_of_xy_is_bounded}
  \end{equation}
  Bound \eqref{eqn:Intermediate__Bound_on_H2_after_triangle_inequality_and_maximization} using \eqref{eqn:Sum_over_heavy_pairs_of_xy_is_bounded} to obtain that
  \begin{equation}
    \Bigl|
      \sum_{(i,j) \in \mathcal{L}^{\mathrm{c}}}
      x_i y_j N_{ij}
    \Bigr|
    \leq
    \mathfrak{n}_2
    \sqrt{\frac{T_n}{n}}
    .
    \label{eqn:Deterministic_sum_xiyiNij_over_heavy_pairs}
  \end{equation}
  almost surely. 
  
  The proposition follows after an application Boole's inequality: for any $\mathfrak{h}_2 > \mathfrak{n}_2$ independent of $n$ and any $n \geq m$,
  \begin{equation}
    \probabilityBig{
      \max_{x,y \in \mathcal{N}_\epsilon} H_2(x,y)
      \geq 
      \mathfrak{h}_2 \sqrt{\frac{T_n}{n}}
    }
    \leq 
    \sum_{x,y \in \mathcal{N}_\epsilon}
    \probability{ 
      H_2(x,y) 
      \geq 
      \mathfrak{h}_2 \sqrt{\frac{T_n}{n}}
    }
    \eqcom{\ref{eqn:Deterministic_sum_xiyiNij_over_heavy_pairs}}
    = 
    \Bigl( \frac{4}{\epsilon} \Bigr)^n 
    \cdot
    0
    = 
    0
    .
  \end{equation}
  That is it.
\end{proof}

\subsection{Proof of \refTheorem{thm:Singular_value_gap}}

We will now combine the results and prove \refTheorem{thm:Singular_value_gap}. 
We will bound the spectral norm of $\hat{N}_\Gamma - N$ and obtain \refTheorem{thm:Singular_value_gap}\ref{itm:Singular_value_gap_with_trimming}; the proof of \refTheorem{thm:Singular_value_gap}\ref{itm:Singular_value_gap_without_trimming} follows the same steps and will therefore be skipped. The only difference is that the discrepancy property used in the first step with \refProposition{prop:discrepancy_property_N_hat_holds} requires trimming to hold when $\omega(n) = T_n = o(n\ln{n})$. We will remark when this is the case during the proof. 

In order to use \refLemma{lem:Properties_of_epsilon_nets}, we will assume from now on that $\epsilon = 1/4 \in (0, 1/3)$. We thus consider the $\epsilon$-net $\mathcal{T}_{1/4}$. Let $\mathfrak{d} > 0$ be a constant independent of $n$ that we will choose later sufficiently large. Using (i) \refLemma{lem:Properties_of_epsilon_nets}\ref{itm:Upper_bound_for_an_epsilon_net}, we can then bound for each $\mathfrak{d} > 0$:
\begin{align}
  \probabilityBig{ \pnorm{ \hat{N}_\Gamma - N }{} \geq \mathfrak{d} \sqrt{\frac{T_n}{n}} }
  &
  \eqcom{i}
  \leq 
  \probabilityBig{ \max_{ x,y \in \mathcal{T}_{1/4} } | \transpose{x} (\hat{N}_\Gamma - N) y | \geq \frac{\mathfrak{d}}{4} \sqrt{\frac{T_n}{n}} }
  \nonumber \\ &
  \eqcom{\ref{eqn:Split_into_contributions_of_light_and_heavy_pairs}}
  \leq  
  \probabilityBig{ \max_{ x,y \in \mathcal{T}_{1/4} } L(x,y) \geq \frac{\mathfrak{d} }{8} \sqrt{\frac{T_n}{n}} } 
  + 
  \probabilityBig{ \max_{ x,y \in \mathcal{T}_{1/4} } H(x,y) \geq \frac{\mathfrak{d} }{8} \sqrt{\frac{T_n}{n}} }
  \nonumber \\ &
  \eqcom{\ref{eqn:Split_contribution_L1_L2_in_light_pairs}, \ref{eqn:Split_contribution_H1_H2_in_heavy_pairs}}  
  \leq 
  \probabilityBig{ \max_{ x,y \in \mathcal{T}_{1/4} } L_1(x,y) \geq \frac{\mathfrak{d} }{16} \sqrt{\frac{T_n}{n}} } 
  + 
  \probabilityBig{ \max_{ x,y \in \mathcal{T}_{1/4} } L_2(x,y) \geq \frac{\mathfrak{d} }{16} \sqrt{\frac{T_n}{n}} } 
  \nonumber \\ &
  \phantom{=}
  + 
  \probabilityBig{ \max_{ x,y \in \mathcal{T}_{1/4} } H_1(x,y) \geq \frac{\mathfrak{d} }{16} \sqrt{\frac{T_n}{n}} }
  +
  \probabilityBig{ \max_{ x,y \in \mathcal{T}_{1/4} } H_2(x,y) \geq \frac{\mathfrak{d} }{16} \sqrt{\frac{T_n}{n}} }
  .
  \label{eqn:theorem_1_separation_step}
\end{align}

From Propositions~\ref{prop:L1_is_bounded_almost_surely}--\ref{prop:H2_is_bounded_almost_surely} it follows that 
there exist constants $\mathfrak{l}_1, \mathfrak{l}_2, \mathfrak{h}_1, \mathfrak{h}_2 > 0$ independent of $n$ and integers $m_1, \ldots, m_4 \in \naturalNumbersPlus$,
such that for any $\mathfrak{d}/16 > \max\{ \mathfrak{l}_1, \mathfrak{l}_2, \mathfrak{h}_1, \mathfrak{h}_2 \}$ and all $n \geq \max \{ m_1, \ldots, m_4 \}$:
\begin{equation}
  \probabilityBig{ \max_{ x,y \in \mathcal{T}_{1/4} } L_2(x,y) \geq \frac{\mathfrak{d} }{16} \sqrt{\frac{T_n}{n}} }
  = 0
  ,
  \quad
  \probabilityBig{ \max_{ x,y \in \mathcal{T}_{1/4} } H_2(x,y) \geq \frac{\mathfrak{d} }{16} \sqrt{\frac{T_n}{n}} }
  = 0
  ,
\end{equation}
and furthermore
\begin{equation}
  \probabilityBig{ 
    \max_{x,y \in \mathcal{T}_{1/4}} L_1(x,y) 
    \geq 
    \frac{\mathfrak{d}}{16} \sqrt{\frac{T_n}{n}} 
  } 
  \leq
  2
  \e{ - \frac{ \mathfrak{d} n }{5248(1+4\eta)} }.
\end{equation}
In order to bound the probability of the event $\{ \max_{x,y \in \mathcal{T}_{1/4}} H_1(x,y) \geq (\mathfrak{d}/16) \sqrt{T_n/n} \}$ using \refProposition{prop:H1_is_bounded_whp_if_the_discrepancy_property_holds}, we must condition on the events
\begin{equation}
  \mathcal{D}_{\mathfrak{d}_1, \mathfrak{d}_2}
  = 
  \bigl\{ 
    \hat{N}_\Gamma \textnormal{ is }(\mathfrak{d}_1, \mathfrak{d}_2)\textnormal{-discrepant} 
  \bigr\}
  ,
  \quad  
  \mathcal{B}_{\mathfrak{b}_2} 
  =
  \Bigl\{ 
    \max_{y \in \Gamma}
    \bigl\{
    \hat{N}_{\Gamma,y} \vee \hat{N}_{y,\Gamma}
    \bigr\}
    \leq
    \mathfrak{b}_2 \frac{T_n}{n}
  \Bigr\}
  ,
\end{equation}
with $\mathfrak{b}_2 > 0$ a sufficiently large constant independent of $n$.
By the law of total probability,
\begin{align}
  &
  \probabilityBig{ 
    \max_{x,y \in \mathcal{T}_{1/4}} H_1(x,y) 
    \geq 
    \frac{\mathfrak{d}}{16} \sqrt{\frac{T_n}{n}} 
  } 
  \nonumber \\ &
  =
  \probabilityBig{ 
    \max_{x,y \in \mathcal{T}_{1/4}} H_1(x,y) 
    \geq 
    \frac{\mathfrak{d}}{16} \sqrt{\frac{T_n}{n}} 
    \Bigm\vert 
    \mathcal{B}_{\mathfrak{b}_2} 
  } 
  \probability{ \mathcal{B}_{\mathfrak{b}_2} }
  \nonumber \\ &
  \phantom{=}
  +
  \probabilityBig{ 
    \max_{x,y \in \mathcal{T}_{1/4}}
    H_1(x,y) \geq \frac{\mathfrak{d}}{16} \sqrt{\frac{T_n}{n}} 
    \Bigm\vert 
    \mathcal{B}_{\mathfrak{b}_2}^{\mathrm{c}} 
  } 
  \probability{ \mathcal{B}_{\mathfrak{b}_2}^{\mathrm{c}} }
  \nonumber \\ &
  \leq 
  \probabilityBig{ 
    \max_{x,y \in \mathcal{T}_{1/4}} H_1(x,y) 
    \geq 
    \frac{\mathfrak{d}}{16} \sqrt{\frac{T_n}{n}} 
    \Bigm\vert 
    \mathcal{B}_{\mathfrak{b}_2} 
  }
  +    
  \probability{ \mathcal{B}_{\mathfrak{b}_2}^{\mathrm{c}} }.
  \label{eqn:proof_theorem_separation_on_event_B}
\end{align}
\refLemma{lem:Bounded_degree_property}\ref{itm:Bounded_degrees_when_trimming} implies that there exists a constant $\mathfrak{b}_2 > 0$ independent of $n$ (which we now will specifically use) such that for sufficiently large $n$,
\begin{equation}
  \probability{ \mathcal{B}_{\mathfrak{b}_2}^{\mathrm{c}} } 
  \leq 
  2\e{- \frac{T_n}{n}}
  .
  \label{eqn:proof_theorem_probability_B_complement}
\end{equation}
To bound the remaining term in \eqref{eqn:proof_theorem_separation_on_event_B} we can again use the law of total probability:
\begin{align}
  &
  \probabilityBig{ 
    \max_{x,y \in \mathcal{T}_{1/4}} H_1(x,y) 
    \geq 
    \frac{\mathfrak{d}}{16} \sqrt{\frac{T_n}{n}} 
    \Bigm\vert 
    \mathcal{B}_{\mathfrak{b}_2} 
  } 
  \nonumber \\ &
  =
  \probabilityBig{ 
    \max_{x,y \in \mathcal{T}_{1/4}} H_1(x,y) 
    \geq 
    \frac{\mathfrak{d}}{16} \sqrt{\frac{T_n}{n}} 
    \Bigm\vert 
    \mathcal{B}_{\mathfrak{b}_2} \cap \mathcal{D}_{\mathfrak{d}_1, \mathfrak{d}_2
  }} 
  \probability{ \mathcal{D}_{\mathfrak{d}_1, \mathfrak{d}_2
  } \vert \mathcal{B}_{\mathfrak{b}_2} }
  \nonumber \\ &
  \phantom{=}
  +
  \probabilityBig{ 
    \max_{x,y \in \mathcal{T}_{1/4}}
    H_1(x,y) \geq \frac{\mathfrak{d}}{16} \sqrt{\frac{T_n}{n}} 
    \Bigm\vert 
    \mathcal{B}_{\mathfrak{b}_2} \cap \mathcal{D}_{\mathfrak{d}_1, \mathfrak{d}_2
  }^{\mathrm{c}}
  } 
  \probability{ \mathcal{D}_{\mathfrak{d}_1, \mathfrak{d}_2
  }^{\mathrm{c} } \vert \mathcal{B}_{\mathfrak{b}_2}}
  \nonumber \\ &
  \leq 
  \probabilityBig{ 
    \max_{x,y \in \mathcal{T}_{1/4}} H_1(x,y) 
    \geq 
    \frac{\mathfrak{d}}{16} \sqrt{\frac{T_n}{n}} 
    \Bigm\vert 
    \mathcal{B}_{\mathfrak{b}_2} \cap \mathcal{D}_{\mathfrak{d}_1, \mathfrak{d}_2
  }
  }
  +    
  \probability{ \mathcal{D}_{\mathfrak{d}_1, \mathfrak{d}_2
  }^{\mathrm{c} } \vert \mathcal{B}_{\mathfrak{b}_2}}
\end{align}
By \refProposition{prop:discrepancy_property_N_hat_holds}\ref{itm:Discrepancy_property_when_not_trimming}, we find that for sufficiently large $n$,
\begin{align}  
  \probability{ 
    \mathcal{D}_{\mathfrak{d}_1, \mathfrak{d}_2
  }^{\mathrm{c}} \vert \mathcal{B}_{\mathfrak{b}_2}
  }
  \leq
  \frac{1}{n}
  .
  \label{eqn:Intermediate__Probability_that_Nhat_is_not_discrepant_bound}
\end{align}
Finally, \refProposition{prop:H1_is_bounded_whp_if_the_discrepancy_property_holds} tells us that for a sufficiently large constants $\mathfrak{b}_2, \mathfrak{d} > 0$ independent of $n$, and sufficiently large $n$,
\begin{equation}
  \probabilityBig{ 
    \max_{x,y \in \mathcal{T}_{1/4}} H_1(x,y) 
    \geq 
    \frac{\mathfrak{d}}{16} \sqrt{\frac{T_n}{n}} 
    \Bigm\vert 
    \mathcal{B}_{\mathfrak{b}_2} \cap \mathcal{D}_{\mathfrak{d}_1, \mathfrak{d}_2
  }
  } 
  =
  0
  .
  \label{eqn:proof_theorem_bound_H_1_given_B_D}
\end{equation}

Combining \eqref{eqn:theorem_1_separation_step}--\eqref{eqn:proof_theorem_bound_H_1_given_B_D} yields that there exist a constant $\mathfrak{d} > 0$ independent of $n$ and an integer $m \in \naturalNumbersPlus$ 
such that 
for all $n \geq m$,
\begin{equation}
  \probabilityBig{ 
    \pnorm{ \hat{N}_\Gamma - N }{} 
    \geq 
    \mathfrak{d} \sqrt{\frac{T_n}{n}} 
  } 
  \leq 
  2
  \e{ - \frac{ \mathfrak{d} n }{5248(1+4\eta)} } 
  + 
  2 \e{-\frac{T_n}{n}} 
  +
  \frac{1}{n}
  .
\end{equation}
We can conclude that if $T_n = \omega(n)$, then $\pnorm{ \hat{N}_\Gamma - N }{} = \bigOP{ \sqrt{T_n/n} }$. 

In case $T_n = \Omega(n\ln{n})$, then we could have avoided trimming by using \refProposition{prop:discrepancy_property_N_hat_holds}\ref{itm:Discrepancy_property_when_not_trimming} and \refLemma{lem:Bounded_degree_property}\ref{itm:Bounded_degrees_when_not_trimming} instead. This requires a repetition of the arguments above but with $\hat{N}$ replacing $\hat{N}_{\Gamma}$. Ultimately, the right-hand side of \eqref{eqn:proof_theorem_probability_B_complement} would be replaced by $\bigO{1/n}$. This completes the proof.

\section{Proof of \refCorollary{cor:spectral_gap}}
\label{sec:proof_corollay_spectral_gap}

Now that we have established the tight bound $\sigma_1(\hat{N}_\Gamma - N) = O_{\mathbb{P}}(\sqrt{T_n/n})$ in \refTheorem{thm:Singular_value_gap}, we can investigate the singular values of $\hat{N}_{\Gamma}$. Because we furthermore know the asymptotic order of the singular values of $N$ in \eqref{eqn:spectrum_N}, we can combine these two facts in a perturbative argument using Weyl's inequality:

\begin{lemma}[Weyl's inequality]
  \label{lemma:Weyls_inequality_for_singular_values}

  Let $A, B \in \R^{s \times m}$ with $s \geq m$, and $\sigma_1(A) \geq \ldots \geq \sigma_m(A)$ and $\sigma_1(B) \geq \ldots \geq \sigma_m(B)$ be the singular values of $A$ and $B$, respectively. 
  If $\pnorm{A - B}{} \leq \epsilon$, then for all $i = 1, \ldots, m$, 
  $
    |\sigma_i(A) - \sigma_i(B)| \leq \epsilon
    .
  $
\end{lemma}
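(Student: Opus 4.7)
The plan is to prove this via the Courant--Fischer min-max characterization of singular values, which is the cleanest route since it avoids passing through eigenvalues of $A^{\mathrm{T}}A$ and $B^{\mathrm{T}}B$ and a separate Weyl inequality for symmetric matrices. For $A \in \R^{s \times m}$ with $s \geq m$, I would use the representation
\begin{equation*}
  \sigma_i(A)
  =
  \min_{\substack{V \subseteq \R^m \\ \dim V = m - i + 1}}
  \max_{\substack{x \in V \\ \pnorm{x}{2} = 1}}
  \pnorm{Ax}{2}
  ,
\end{equation*}
which is standard and follows from the SVD of $A$.

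First, I would observe that for every unit vector $x \in \R^m$, the triangle inequality together with the operator norm bound $\pnorm{A - B}{} \leq \epsilon$ gives
\begin{equation*}
  \pnorm{Ax}{2}
  \leq
  \pnorm{Bx}{2} + \pnorm{(A-B)x}{2}
  \leq
  \pnorm{Bx}{2} + \epsilon
  ,
\end{equation*}
and symmetrically $\pnorm{Bx}{2} \leq \pnorm{Ax}{2} + \epsilon$. Next, I would fix $i \in \{1, \ldots, m\}$ and apply the min-max formula: taking the max over $x$ in any subspace $V$ of dimension $m - i + 1$, and then taking the min over such $V$, both operations preserve the additive $\epsilon$-slack, yielding $\sigma_i(A) \leq \sigma_i(B) + \epsilon$. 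Reversing the roles of $A$ and $B$ gives the matching inequality, and combining the two produces $|\sigma_i(A) - \sigma_i(B)| \leq \epsilon$.

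There is really no substantive obstacle here; the result is a classical and short consequence of the min-max principle. The only item that deserves a sentence of care is verifying that the min-max characterization applies in the rectangular case $s \geq m$, which is immediate from the SVD: if $A = U \Sigma V^{\mathrm{T}}$ with $\Sigma$ having diagonal entries $\sigma_1(A) \geq \cdots \geq \sigma_m(A)$, then $\pnorm{Ax}{2}^2 = \sum_{j=1}^m \sigma_j(A)^2 (V^{\mathrm{T}} x)_j^2$ for any unit $x \in \R^m$, from which both the max-min and min-max formulas follow by the same argument as in the symmetric case. Given this, the rest is the two-line triangle inequality argument above, and the lemma follows. Since this lemma is stated purely as a tool to be invoked in the proof of \refCorollary{cor:spectral_gap}, I would keep the write-up brief and cite a standard reference such as \cite{horn1994topics} rather than reproduce the SVD argument in full.
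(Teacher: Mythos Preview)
Your proposal is correct. The Courant--Fischer min-max argument you outline is one of the standard ways to establish this perturbation bound, and the triangle-inequality step you describe goes through without issue in the rectangular case once the min-max formula is in hand.

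The paper takes a slightly different and more compressed route: rather than unpacking the min-max characterization, it simply invokes \cite[Theorem~3.3.16]{horn1994topics}, which already states the interlacing-type inequality $\sigma_i(A) \leq \sigma_i(B) + \sigma_1(A-B)$ for singular values, and then applies it symmetrically in $A$ and $B$. Your approach has the advantage of being self-contained and explaining \emph{why} the inequality holds, whereas the paper's version is shorter and treats the result as a black-box citation. Since you already planned to cite \cite{horn1994topics} anyway, either write-up is appropriate here; the lemma is purely a tool for \refCorollary{cor:spectral_gap} and does not warrant more than a few lines.
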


\begin{proof}
  It follows from \cite[Theorem 3.3.16]{horn1994topics} that for $i = 1, \ldots, m$, $\sigma_i(A) \leq \sigma_i(B) + \sigma_1(A-B)$ and $\sigma_i(B) \leq \sigma_i(A) + \sigma_1(B-A)$. The claim follows by noting that $\pnorm{A-B}{} = \sigma_1(A-B) = \sigma_1(B-A)$. 
\end{proof}

For any $\epsilon > 0$, there exists then $\delta_\varepsilon , m_{\epsilon} > 0$ such that,
\begin{align}
  \probabilityBig{
    \sigma_{K+1}(\hat{N}_\Gamma) 
    \geq 
    \delta_\varepsilon \sqrt{\frac{T_n}{n}}
  }
  &
  =
  \probabilityBig{
    \sigma_{K+1}(\hat{N}_\Gamma) - \sigma_{K+1}(N)
    \geq 
    \delta_\varepsilon \sqrt{\frac{T_n}{n}} 
  }
  \nonumber \\ &
  \leq
  \probabilityBig{
    \sigma_1(\hat{N}_\Gamma-N)
    \geq 
    \delta_\varepsilon \sqrt{\frac{T_n}{n}}
  }  \leq \epsilon
  .
\end{align}
for any $n \geq m_{\epsilon}$. This implies that $\sigma_{K+1}(\hat{N}_\Gamma) = \bigOP{\sqrt{T_n/n}}$ and so $\sigma_{i}(\hat{N}_\Gamma) = \bigOP{\sqrt{T_n/n}}$ for any $i =K+1, \ldots, n$. 
Similarly there exists $\kappa_{\epsilon}, l_{\epsilon}$ such that for any $n \geq l_{\epsilon}$ we have
\begin{align}
  \probabilityBig{
    |\sigma_K(\hat{N}_\Gamma) - \sigma_K(N)|
    \geq 
    \kappa_\varepsilon \sqrt{\frac{T_n}{n}}
  }
  &
  \leq
  \probabilityBig{
    \sigma_1(\hat{N}_\Gamma-N)
    \geq 
    \kappa_\varepsilon \sqrt{\frac{T_n}{n}}
  } \leq \epsilon  
  \label{eqn:gap_intermediate1}
\end{align}
and thus $\sigma_K(\hat{N}_\Gamma) - \sigma_K(N) = \bigOP{\sqrt{T_n/n}}$ also. Recall \eqref{eqn:spectrum_N}, which implies that there exist constants $\mathfrak{a}_1, \mathfrak{a}_2 > 0$ independent of $n$ such that for large $n$ we have $\mathfrak{a}_1 T_n/n \leq \sigma_K(N) \leq \mathfrak{a}_2 T_n/n$. Since $T_n = \omega(n)$, we have $\sqrt{T_n/n} \to \infty$ as $n \to \infty$. Let now $n_0$ be large enough such that for any $n \geq n_0$ we have $\mathfrak{e}_1 \geq \mathfrak{a}_1 - \kappa_{\epsilon}(\sqrt{T_n/n})^{-1}$ and $\mathfrak{e}_2 \leq \mathfrak{a}_2 + \kappa_{\epsilon}(\sqrt{T_n/n})^{-1}$ for some $\mathfrak{e}_1, \mathfrak{e}_2 > 0$. Then for any $n \geq \max\{ m_{\epsilon}, l_{\epsilon}, n_0 \}$
\begin{align}
  & 
  \probabilityBig{
    \mathfrak{e}_1 T_n/n \leq \sigma_K(\hat{N}_\Gamma)  \leq 
    \mathfrak{e}_2 T_n/n
  } 
  \geq \probabilityBig{
   \mathfrak{a}_1 T/n - \kappa_\varepsilon \sqrt{\frac{T_n}{n}}  \leq \sigma_K(\hat{N}_\Gamma) \leq 
    \mathfrak{a}_2 T/n + \kappa_\varepsilon \sqrt{\frac{T_n}{n}}
  } 
  \\ & 
  \geq 
  \probabilityBig{
    \{  - \kappa_\varepsilon \sqrt{\frac{T_n}{n}}  \leq \sigma_K(\hat{N}_\Gamma) - \mathfrak{a}_1 T/n  \} \cap \{ \sigma_K(\hat{N}_\Gamma) - \mathfrak{a}_2 T/n \leq \kappa_\varepsilon \sqrt{\frac{T_n}{n}}\}
  } 
  \nonumber \\ & 
  \geq \probabilityBig{
   \bigl\{  - \kappa_\varepsilon \sqrt{\frac{T_n}{n}}  \leq \sigma_K(\hat{N}_\Gamma) - \sigma_K(N)  \bigr\} \cap \bigl\{ \sigma_K(\hat{N}_\Gamma) - \sigma_K(N) \leq \kappa_\varepsilon \sqrt{\frac{T_n}{n}}\bigr\}
  }
  \nonumber \\ & 
  \geq 
  1 - \epsilon. 
  \nonumber
\end{align}
The same argument holds for $\sigma_{i}(\hat{N}_\Gamma)$ for $i=1,\ldots, K-1$. Hence, we obtain $\sigma_{i}(\hat{N}_\Gamma) = \Theta_{\mathbb{P}}(T/n)$.

\section{Proof of \refProposition{prop:lower_bound_holds_positive_measure}}
\label{sec:lower_bound_spectral_norm}

We now prove that if $\omega(n) = T_n = o(n^2)$, then there exist constants $\mathfrak{b}, \mathfrak{e}_{\mathfrak{b}} > 0$ independent of $n$ and an integer $n_0 \in \naturalNumbersPlus$ such that for any $n \geq n_0$,
\begin{equation}
  \probabilityBig{ 
    \sigma_1(\hat{N} - N) 
    > 
    \mathfrak{b} \sqrt{\frac{T_n}{n}} 
    }
  \geq 
  1 - \e{ -\mathfrak{e}_{\mathfrak{b}} \frac{T_n}{n} } 
  .
  \label{eqn:proof_lower_bound}
\end{equation}
Note from the definition of the spectral norm in \eqref{eqn:def_spectral_norm_2_norm} that 
\begin{equation}
  \sigma_1(\hat{N} - N)^2 
  \geq 
  \pnorm{ (1,0,\ldots,0)^{\mathrm{T}} (\hat{N} - N) }{2}^2 
  = 
  \sum_{j=1}^n |\hat{N}_{1j} - N_{1j}|^2
\end{equation}
almost surely. 
Therefore
\begin{equation}
  \probabilityBig{ \sigma_1(\hat{N} - N) > \mathfrak{b} \sqrt{ \frac{T_n}{n} } }
  \geq 
  \probabilityBig{ \sum_{j=1}^n |\hat{N}_{1j} - N_{1j}|^2 > \mathfrak{b}^2 \frac{T_n}{n} }
  = 
  1 - 
  \probabilityBig{ \sum_{j=1}^n |\hat{N}_{1j} - N_{1j}|^2 \leq \mathfrak{b}^2 \frac{T_n}{n} }
  .
  \label{eqn:proof_lower_bound_intermediate1}
\end{equation}
It is thus enough to prove that there exist constants $\mathfrak{b}, \mathfrak{e}_{\mathfrak{b}} > 0$ independent of $n$ and an integer $n_0 \in \naturalNumbersPlus$ such that for all $n \geq n_0$
\begin{equation}
  \probabilityBig{ 
    \sum_{j=1}^n |\hat{N}_{1j} - N_{1j}|^2
    \leq
    \mathfrak{b}^2 \frac{T_n}{n}
  } 
  \leq 
  \e{ -\mathfrak{e}_{\mathfrak{b}} \frac{T_n}{n} }
  .
  \label{eqn:proof_lower_bound_main1}
\end{equation}

To prove \eqref{eqn:proof_lower_bound_main1}, we rely on the following lemma:

\begin{lemma}
  \label{lemma:bounded_single_degree_constant}

  Suppose that $T_n = \omega(n)$.
  \begin{enumerate}[label=(\alph*)]
    \item
    \label{lemma:Set_inclusion_construction}

      For any constant $\mathfrak{b} \in (0, \tfrac{1}{4} \pi_{\sigma(1)}/\alpha_{\sigma(1)})$ independent of $n$ 
      there exist constants 
      $\mathfrak{c}_\mathfrak{b} \in [\tfrac{1}{2} \pi_{\sigma(1)}/\alpha_{\sigma(1)}, \allowbreak \tfrac{1}{2} \pi_{\sigma(1)}/\alpha_{\sigma(1)} + \mathfrak{b}]$, 
      $\mathfrak{d}_\mathfrak{b} \in ( \pi_{\sigma(1)} / \alpha_{\sigma(1)}, \allowbreak \pi_{\sigma(1)} / \alpha_{\sigma(1)} + \mathfrak{b}]$ independent of $n$
      and 
      an integer $n_0 \in \naturalNumbersPlus$ 
      such that for all $n \geq n_0$,
      \begin{equation}
        \mathfrak{c}_\mathfrak{b} \frac{T_n}{n}
        \leq 
        N_{1,[n]} - \mathfrak{b} \frac{T_n}{n}
        \quad
        \textnormal{and}
        \quad
        N_{1,[n]}
        \leq 
        \mathfrak{d}_\mathfrak{b} \frac{T_n}{n}
        \leq 
        N_{1,[n]} + \mathfrak{b} \frac{T_n}{n}
        .
      \end{equation}
      In particular
      \begin{equation}
        \mathfrak{c}_\mathfrak{b} \frac{T_n}{n}
        \leq 
        N_{1,[n]} - \mathfrak{b} \frac{T_n}{n}
        \leq
        \bigl( \mathfrak{d}_\mathfrak{b} - \mathfrak{b} \bigr) 
        \frac{T_n}{n}
        \quad
        \textnormal{and}
        \quad
        \mathfrak{c}_\mathfrak{b} \frac{T_n}{n}
        \leq 
        N_{1,[n]} + \mathfrak{b} \frac{T_n}{n}
        \leq 
        \bigl( \mathfrak{d}_\mathfrak{b} + \mathfrak{b} \bigr)
        \frac{T_n}{n}
        .
        \label{eqn:Set_inclusion_construction}
      \end{equation}

    \item
    \label{lem:Difference_between_N1Vhat_and_N1V_for_arbitrary_constant_b}

      For any constant $\mathfrak{b} > 0$ independent of $n$ there exists a constant $\mathfrak{e}_\mathfrak{b} > 0$ independent of $n$ and an integer $n_1 \in \naturalNumbersPlus$ such that for all $n \geq n_1$,
      \begin{equation}
        \probabilityBig{
          \bigl|
          \hat{N}_{1, [n]} - N_{1, [n]}
          \bigr|
          \geq
          \mathfrak{b}
          \frac{T_n}{n}
        }
        \leq
        2 \e{- \mathfrak{e}_\mathfrak{b} \frac{T_n}{n}}
        .
      \end{equation}
  \end{enumerate}
\end{lemma}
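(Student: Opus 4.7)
The two parts are of quite different nature: (a) is a deterministic bookkeeping exercise on $N_{1,[n]}$, while (b) is a concentration statement about a single sample path, obtained by a direct application of the Markov-chain concentration bound \eqref{eqn:Paulins_concentration_inequality}.

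For part (a), I would start from $N = T_n \mathrm{Diag}(\Pi) P$, which gives $N_{1,j} = T_n \Pi_1 P_{1,j}$ and hence $N_{1,[n]} = T_n \Pi_1 \sum_{j} P_{1,j} = T_n \Pi_1$. Using $\Pi_1 = \pi_{\sigma(1)}/|\mathcal{V}_{\sigma(1)}|$ from \eqref{eqn:Definition_of_Pi} together with $|\mathcal{V}_{\sigma(1)}| = \lfloor n\alpha_{\sigma(1)}\rfloor$ (up to an $O(1)$ correction for cluster~$1$), one obtains $N_{1,[n]}/(T_n/n) = n\pi_{\sigma(1)}/|\mathcal{V}_{\sigma(1)}| \to \pi_{\sigma(1)}/\alpha_{\sigma(1)}$ as $n\to\infty$. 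The prescribed constants then present themselves: take e.g.\ $\mathfrak{c}_\mathfrak{b} = \tfrac{1}{2}\pi_{\sigma(1)}/\alpha_{\sigma(1)}$ and $\mathfrak{d}_\mathfrak{b} = \pi_{\sigma(1)}/\alpha_{\sigma(1)} + \mathfrak{b}/2$. Once $n$ is large enough that $|N_{1,[n]}/(T_n/n) - \pi_{\sigma(1)}/\alpha_{\sigma(1)}| \leq \mathfrak{b}/2$, the four inequalities in \eqref{eqn:Set_inclusion_construction} are all verified mechanically; the restriction $\mathfrak{b} < \tfrac{1}{4}\pi_{\sigma(1)}/\alpha_{\sigma(1)}$ is precisely what creates the necessary slack $\tfrac{3}{4}\pi_{\sigma(1)}/\alpha_{\sigma(1)} - \mathfrak{b} > \tfrac{1}{2}\pi_{\sigma(1)}/\alpha_{\sigma(1)}$ in the lower inequality.

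For part (b), I would apply \eqref{eqn:Paulins_concentration_inequality} directly to the stationary chain $\process{X_t}{t\geq 0}$ with the test function $f(x) = \indicator{x = 1}$. Because $\hat{N}_{1,[n]} = \sum_{t=0}^{T_n-1} \indicator{X_t = 1} = \sum_{t=0}^{T_n-1} f(X_t)$ and $\expectationWrt{f(X_t)}{\Pi} = \Pi_1$, we have $\sum_{t=0}^{T_n-1} \expectationWrt{f(X_t)}{\Pi} = T_n \Pi_1 = N_{1,[n]}$, so the event in question is exactly the one governed by the inequality. The ingredients to plug in are: the uniform bound $|f - \Pi_1| \leq 1$, giving $C = 1$; the variance bound $V_f = \Pi_1(1-\Pi_1) \leq \Pi_1 = O(1/n)$, which follows from \refLemma{lem:Asymptotic_upper_bound_on_Nxy} (or more directly from \eqref{eqn:Definition_of_Pi}); and the \gls{BMC} pseudo-spectral-gap bound $\gamma_{\mathrm{ps}} \geq 1/(2(1+4\eta)) = \Theta(1)$ recalled in \refSection{sec:Properties_of_BMCs}.

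Setting $z = \mathfrak{b} T_n/n$ in \eqref{eqn:Paulins_concentration_inequality}, the denominator of the exponent becomes $8(T_n + 1/\gamma_{\mathrm{ps}}) V_f + 20 \mathfrak{b} (T_n/n) = O(T_n/n) + O(\mathfrak{b} T_n/n)$, while the numerator is of order $\mathfrak{b}^2 T_n^2/n^2$. Their ratio is of order $\mathfrak{b}^2 T_n / ((1+\mathfrak{b}) n)$, yielding a factor $\mathfrak{e}_\mathfrak{b} T_n/n$ in the exponent with some $\mathfrak{e}_\mathfrak{b} = \Theta(\mathfrak{b}^2/(1+\mathfrak{b})) > 0$ independent of $n$; this is exactly the advertised bound. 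I do not expect any serious obstacle: (a) is just calculus around a limit, and (b) is a textbook application of \eqref{eqn:Paulins_concentration_inequality}, whose only mildly delicate point is tracking that $V_f = O(1/n)$ (so that the $T_n V_f = O(T_n/n)$ term in the denominator is of the same order as the $zC = O(\mathfrak{b} T_n/n)$ term, and not larger) --- this is what produces the $T_n/n$ in the exponent rather than something worse.
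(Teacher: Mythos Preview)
Your proposal is correct and follows essentially the same approach as the paper. For (a) the paper simply invokes $N_{1,[n]} = \Theta(T_n/n)$ from \refLemma{lem:Asymptotic_upper_bound_on_Nxy}, whereas you spell out the exact limit $N_{1,[n]}/(T_n/n)\to\pi_{\sigma(1)}/\alpha_{\sigma(1)}$ and exhibit explicit constants; for (b) the paper points to the computation in \eqref{eqn:Paulins_bound__Probability_to_bound_the_maximum_outdegree}--\eqref{eqn:Paulins_bound__Probability_to_bound_the_maximum_outdegree__Applied}, which is precisely your application of \eqref{eqn:Paulins_concentration_inequality} with $f=\indicator{\cdot=1}$, $C=1$, $V_f=\Pi_1(1-\Pi_1)=O(1/n)$, and $z=\mathfrak{b}T_n/n$.
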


\begin{proof}
  The first claim follows because $N_{1,[n]} = \Theta(T_n/n)$; \emph{cf.} \refLemma{lem:Asymptotic_upper_bound_on_Nxy}.

  The second claim can be proven using the same strategy as for \eqref{eqn:Concentration_inequality_for_abs_hatNVx_minus_NVx}, see the argument at \eqref{eqn:Paulins_bound__Probability_to_bound_the_maximum_outdegree}--\eqref{eqn:Paulins_bound__Probability_to_bound_the_maximum_outdegree__Applied} and equation \eqref{eqn:Paulins_bound__Probability_to_bound_the_maximum_outdegree__Applied} in particular. The only difference is that we need to keep track of the different constants $\mathfrak{b}, \mathfrak{e}_\mathfrak{b}$, and that we do not choose $\mathfrak{b}$ to be large.
\end{proof}

Let $\mathfrak{b} \in (0, \tfrac{1}{4} \pi_{\sigma(1)} / \alpha_{\sigma(1)})$. 
By \refLemma{lemma:bounded_single_degree_constant}\ref{lemma:Set_inclusion_construction} 
there exist constants 
$\mathfrak{c}_\mathfrak{b} \in [\tfrac{1}{2} \pi_{\sigma(1)}/\alpha_{\sigma(1)}, \allowbreak \tfrac{1}{2} \pi_{\sigma(1)}/\alpha_{\sigma(1)} \allowbreak + \mathfrak{b}]$, 
$\mathfrak{d}_\mathfrak{b} \in ( \pi_{\sigma(1)} / \alpha_{\sigma(1)}, \allowbreak \pi_{\sigma(1)} / \alpha_{\sigma(1)} + \mathfrak{b}]$ 
and an integer $n_0 \in \naturalNumbersPlus$ 
such that 
for all $n \geq n_0$, 
the event
\begin{equation}
  \Bigl\{ 
    |\hat{N}_{1, [n]} - N_{1, [n]}| \leq \mathfrak{b} \frac{T_n}{n}
  \Bigr\}
  \eqcom{\ref{eqn:Set_inclusion_construction}}
  \subseteq
  \Bigl\{ 
    \hat{N}_{1,[n]} 
    \in 
    \Bigl[ 
      \mathfrak{c}_\mathfrak{b} \frac{T_n}{n}
      , 
      (\mathfrak{d}_\mathfrak{b} + \mathfrak{b}) \frac{T_n}{n} 
    \Bigr]
  \Bigr\}
  =  
  \mathcal{B}_\mathfrak{b}
  ,
  \label{eqn:Event_Bb}
\end{equation}
say. 
Observe that its complement
\begin{equation}
  \mathcal{B}_\mathfrak{b}^{\mathrm{c}}
  \subseteq
  \Bigl\{ 
    |\hat{N}_{1, [n]} - N_{1, [n]}| > \mathfrak{b} \frac{T_n}{n}
  \Bigr\}
  .
  \label{eqn:Complement_of_event_Bb_is_bounded}
\end{equation}
Furthermore, by the law of total probability,
\begin{align}
  &
  \probabilityBig{
  \sum_{j=1}^n |\hat{N}_{1,j} - N_{1,j}|^2 
  \leq 
  \mathfrak{b} \frac{T_n}{n}
  }
  \nonumber \\ &
  = 
  \probabilityBig{
    \sum_{j=1}^n |\hat{N}_{1,j} - N_{1,j}|^2 \leq \mathfrak{b} \frac{T_n}{n} 
    \Bigm\vert    
    \mathcal{B}_\mathfrak{b}
  }
  \probability{ \mathcal{B}_\mathfrak{b} } 
  + 
  \probabilityBig{
    \sum_{j=1}^n |\hat{N}_{1,j} - N_{1,j}|^2 \leq \mathfrak{b} \frac{T_n}{n} \Bigm\vert 
    \mathcal{B}_\mathfrak{b}^{\mathrm{c}} 
  } 
  \probability{ \mathcal{B}_\mathfrak{b}^{\mathrm{c}} }
  \nonumber \\ & 
  \eqcom{\ref{eqn:Complement_of_event_Bb_is_bounded}}
  \leq   
  \probabilityBig{
    \sum_{j=1}^n |\hat{N}_{1,j} - N_{1,j}|^2 \leq \mathfrak{b} \frac{T_n}{n} 
    \Bigm\vert    
    \mathcal{B}_\mathfrak{b}
  }
  +
  \probabilityBig{ 
    |\hat{N}_{1, [n]} - N_{1, [n]}| > \mathfrak{b} \frac{T_n}{n}
  }
  \nonumber \\ &
  \eqcom{i}
  \leq    
  \probabilityBig{
    \sum_{j=1}^n |\hat{N}_{1,j} - N_{1,j}|^2 \leq \mathfrak{b} \frac{T_n}{n} 
    \Bigm\vert    
    \mathcal{B}_\mathfrak{b}
  }
  +
  2 \e{- \mathfrak{e}_\mathfrak{b} \frac{T_n}{n} }  
  .
  \label{eqn:proof_lower_bound_main2}
\end{align}
We (i) used \refLemma{lemma:bounded_single_degree_constant}\ref{lem:Difference_between_N1Vhat_and_N1V_for_arbitrary_constant_b} to establish the last step. 
What remains is to bound the second-to-last term. 

Recall that by assumption, $\omega(n) = T_n = o(n^2)$. 
By \refLemma{lem:Asymptotic_upper_bound_on_Nxy}
there exist constants 
$\mathfrak{n}_1 \in (0, \min_{k,l \in [K]} \pi_k p_{k,l} / (\alpha_k \alpha_l) )$, 
$\mathfrak{n}_2 \in (\max_{k \in [K]} \pi_k / (\alpha_k \alpha_l), \infty)$ 
such that for all sufficiently large $n$, 
$\mathfrak{n}_1 T_n/n^2 \leq \min_{j \in [n]} N_{1,j} \leq \mathfrak{n}_2 T_n/n^2$.
There thus also exists an integer $n_2 \in \naturalNumbersPlus$ such that for all $n > n_2$, $\mathfrak{q}_1 T_n / n^2 \leq \min_{j \in [n]} N_{1,j} \leq 1/4$. 
Let $n \geq n_2$ so that if moreover $\hat{N}_{1,j} \geq 1$, then
\begin{equation}
  | \hat{N}_{1,j} - N_{1,j} |^2 
  \geq 
  \tfrac{1}{2} \hat{N}_{1,j}^2 + N_{1,j}^2
  ;
  \label{eqn:Intermediate_inequality_on_a_centered_row}
\end{equation}
and if instead $\hat{N}_{1,j} = 0$, then \eqref{eqn:Intermediate_inequality_on_a_centered_row} still holds. 
Thus for any $n \geq n_2$,
\begin{equation}
  \sum_{i=1}^n | \hat{N}_{1,j} - N_{1,j} |^2 
  \geq 
  \tfrac{1}{2} \sum_{i=1}^n |N_{1,j}|^2 
  + 
  n \min_{j \in [n]} N_{1,j}^2
  \geq 
  \tfrac{1}{2} \sum_{i=1}^n |N_{1,j}|^2 
  +
  \mathfrak{n}_1^2 \frac{T_n^2}{n^3}
  .
  \label{eqn:Intermediate__Lower_bound_for_Tn_is_smallo_n2}
\end{equation}
By assumption $T_n^2/n^3 = o(T_n/n)$. We can therefore write, for sufficiently large $n$,
\begin{align}
  \probabilityBig{ 
    \sum_{i=1}^n | \hat{N}_{1,j} - N_{1,j} |^2 
    \leq 
    \mathfrak{b} \frac{T_n}{n} 
    \Bigm\vert 
    \mathcal{B}_{\mathfrak{b}}
  }
  & 
  \eqcom{\ref{eqn:Intermediate__Lower_bound_for_Tn_is_smallo_n2}}
  \leq 
  \probabilityBig{
    \sum_{i=1}^n |\hat{N}_{1,j}|^2 
    \leq 
    \bigl( 2 \mathfrak{b} - \smallO{1} \bigr) \frac{T_n}{n}
    \Bigm\vert 
    \mathcal{B}_{\mathfrak{b}} 
  }
  .
  \label{eqn:Intermediate__Probability_bound_right_before_AQM_inequality}
\end{align}

Observe now that if $\sum_{j=1}^n |\hat{N}_{1,j}|^2 \leq 2 \mathfrak{b} T_n/n$, then at most $m \leq \lfloor 2 \mathfrak{b} T_n/n \rfloor$ elements of the row vector $\hat{N}_{1,\cdot}$ can be strictly positive; $\hat{N}_{1,{j_1}} > 0, \ldots, \hat{N}_{1,j_m} > 0$ say. The reason is that for $j \in [n]$, $\hat{N}_{1,j} \in \naturalNumbersZero$. 
Using (i) the arithmetic--quadratic--mean inequality, we then obtain
\begin{align}
  \sum_{i=1}^n |\hat{N}_{1,j}|^2 
  & 
  = 
  \sum_{k=1}^m |\hat{N}_{1,j_k}|^2 
  \eqcom{i} 
  \geq 
  \frac{1}{m} 
  \Bigl( \sum_{k=1}^m \hat{N}_{1,j_k} \Bigr)^2 
  \geq 
  \frac{n}{2 \mathfrak{b}T_n} \hat{N}_{1,[n]}^2.
  \label{eqn:Application_of_the_AQM_inequality}
\end{align}

Now 
(i) bound \eqref{eqn:Intermediate__Probability_bound_right_before_AQM_inequality} by \eqref{eqn:Application_of_the_AQM_inequality}, 
to find that
(ii) for sufficiently large $n$,
\begin{align}
  &
  \probabilityBig{ 
    \sum_{i=1}^n | \hat{N}_{1,j} - N_{1,j} |^2 
    \leq 
    \mathfrak{b} \frac{T_n}{n}
    \Bigm\vert 
    \mathcal{B}_{\mathfrak{b}} 
  } 
  \eqcom{i}
  \leq 
  \probabilityBig{
    \frac{n}{2 \mathfrak{b}T_n} \hat{N}_{1, [n]}^2 \leq \bigl( 2 \mathfrak{b} - \smallO{1} \bigr) \frac{T_n}{n} 
    \Bigm\vert 
    \mathcal{B}_{\mathfrak{b}} 
  }
  \nonumber \\ &
  =
  \probabilityBig{
    \hat{N}_{1, [n]} 
    \leq 
    2 \sqrt{ \mathfrak{b} ( \mathfrak{b} - \smallO{1} ) } \frac{T_n}{n} 
    \Bigm\vert 
    \mathcal{B}_{\mathfrak{b}} 
  } 
  \eqcom{ii}
  \leq   
  \probabilityBig{
    \hat{N}_{1, [n]} 
    \leq 
    2 \mathfrak{b} \frac{T_n}{n} 
    \Bigm\vert 
    \mathcal{B}_{\mathfrak{b}} 
  }   
  \eqcom{iii}
  = 
  0
\end{align}
because of
(iii) the definition of the event $\mathcal{B}_{\mathfrak{b}}$ in \eqref{eqn:Event_Bb} combined with the fact that $2 \mathfrak{b}T_n/n < \mathfrak{c}_\mathfrak{b} T_n/n$ by construction.
This completes the proof. 
\QuodEratDemonstrandum

\section{Numerical validation}
\label{sec:numerical_validation}
We now briefly validate the asymptotics proven in \refTheorem{thm:Singular_value_gap} numerically.\footnote{The simulation utilizes the \emph{\gls{GSL}} for random number generation; \emph{Eigen}, a high-level library for linear algebra, matrix, and vector operations; and the \emph{\gls{SPECTRA}}, a library for large-scale eigenvalue problems built on top of Eigen. The mathematical components of our \gls{BMC} simulator were furthermore unit tested to ensure validity. Finally, we instructed the \gls{MSVC} compiler to activate the \emph{OpenMP} extension to parallelize the simulation across CPUs and so that Eigen could parallelize matrix multiplications (/openmp); to apply maximum optimization (/O2); to enable enhanced CPU instruction sets (/arch:AVX2); and to explicitly target 64-bit x64 hardware. The code can be found at \url{https://gitlab.tue.nl/acss/public/spectral-norm-bounds-for-block-markov-chain-random-matrices}.} We try to do so in an as sparse as possible regime, because this is the most interesting and challenging to see the asymptotics in.

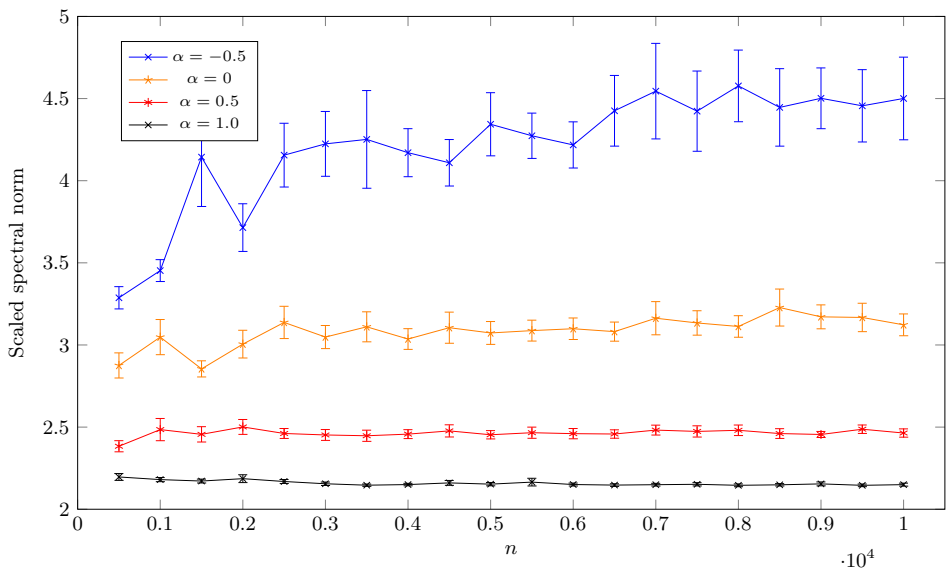
\begin{figure}[!hbtp]
    \centering
    \begin{tikzpicture}[scale = 0.85]
      \begin{axis}[
        xmin = 0, xmax = 10500,
        ymin = 2, ymax = 5,
        xlabel = {$n$}, ylabel = {Scaled spectral norm},
        label style = {font = \small},
        tick label style = {font = \small},
        legend style={at={(0.05,0.95)},anchor=north west},
        width = \linewidth,
        height = 0.618\linewidth
      ]
      \addplot[blue, mark=x, x filter/.code={\pgfmathparse{\pgfmathresult-0}}, error bars/.cd, y dir=both, y explicit] plot coordinates {
        (500, 3.28725) += (500, 0.0677924) -= (500, 0.0677924)
        (1000, 3.45281) += (1000, 0.0666286) -= (1000, 0.0666286)
        (1500, 4.14365) += (1500, 0.300585) -= (1500, 0.300585)
        (2000, 3.71451) += (2000, 0.145113) -= (2000, 0.145113)
        (2500, 4.15558) += (2500, 0.194073) -= (2500, 0.194073)
        (3000, 4.22415) += (3000, 0.197321) -= (3000, 0.197321)
        (3500, 4.25139) += (3500, 0.297361) -= (3500, 0.297361)
        (4000, 4.17051) += (4000, 0.146184) -= (4000, 0.146184)
        (4500, 4.10913) += (4500, 0.141477) -= (4500, 0.141477)
        (5000, 4.34357) += (5000, 0.192181) -= (5000, 0.192181)
        (5500, 4.27352) += (5500, 0.137885) -= (5500, 0.137885)
        (6000, 4.21774) += (6000, 0.140526) -= (6000, 0.140526)
        (6500, 4.42559) += (6500, 0.215131) -= (6500, 0.215131)
        (7000, 4.54515) += (7000, 0.290811) -= (7000, 0.290811)
        (7500, 4.42334) += (7500, 0.244211) -= (7500, 0.244211)
        (8000, 4.57703) += (8000, 0.218095) -= (8000, 0.218095)
        (8500, 4.44615) += (8500, 0.235822) -= (8500, 0.235822)
        (9000, 4.50155) += (9000, 0.185001) -= (9000, 0.185001)
        (9500, 4.45586) += (9500, 0.220145) -= (9500, 0.220145)
        (10000, 4.50057) += (10000, 0.25157) -= (10000, 0.25157)
      };
      \addlegendentry{\scriptsize $\alpha = -0.5$};
      \addplot[orange, mark=star, x filter/.code={\pgfmathparse{\pgfmathresult-0}}, error bars/.cd, y dir=both, y explicit] plot coordinates {
        (500, 2.87541) += (500, 0.0764689) -= (500, 0.0764689)
        (1000, 3.04789) += (1000, 0.107064) -= (1000, 0.107064)
        (1500, 2.85443) += (1500, 0.0491279) -= (1500, 0.0491279)
        (2000, 3.00511) += (2000, 0.0843903) -= (2000, 0.0843903)
        (2500, 3.13723) += (2500, 0.0982087) -= (2500, 0.0982087)
        (3000, 3.04822) += (3000, 0.070324) -= (3000, 0.070324)
        (3500, 3.11049) += (3500, 0.0913454) -= (3500, 0.0913454)
        (4000, 3.03615) += (4000, 0.0630791) -= (4000, 0.0630791)
        (4500, 3.10517) += (4500, 0.0947568) -= (4500, 0.0947568)
        (5000, 3.07314) += (5000, 0.0695527) -= (5000, 0.0695527)
        (5500, 3.08741) += (5500, 0.0636043) -= (5500, 0.0636043)
        (6000, 3.09882) += (6000, 0.0655236) -= (6000, 0.0655236)
        (6500, 3.08125) += (6500, 0.0581683) -= (6500, 0.0581683)
        (7000, 3.1629) += (7000, 0.101004) -= (7000, 0.101004)
        (7500, 3.13404) += (7500, 0.0743088) -= (7500, 0.0743088)
        (8000, 3.1126) += (8000, 0.0655446) -= (8000, 0.0655446)
        (8500, 3.22785) += (8500, 0.11261) -= (8500, 0.11261)
        (9000, 3.17123) += (9000, 0.0728403) -= (9000, 0.0728403)
        (9500, 3.1675) += (9500, 0.0864455) -= (9500, 0.0864455)
        (10000, 3.12255) += (10000, 0.0663986) -= (10000, 0.0663986)
      };
      \addlegendentry{\scriptsize $\alpha = 0$};
      \addplot[red, mark=asterisk, x filter/.code={\pgfmathparse{\pgfmathresult+0}}, error bars/.cd, y dir=both, y explicit] plot coordinates {
        (500, 2.38347) += (500, 0.0339093) -= (500, 0.0339093)
        (1000, 2.48488) += (1000, 0.0676725) -= (1000, 0.0676725)
        (1500, 2.45595) += (1500, 0.0466396) -= (1500, 0.0466396)
        (2000, 2.50103) += (2000, 0.0453305) -= (2000, 0.0453305)
        (2500, 2.46123) += (2500, 0.0302165) -= (2500, 0.0302165)
        (3000, 2.45217) += (3000, 0.0330478) -= (3000, 0.0330478)
        (3500, 2.44731) += (3500, 0.0340509) -= (3500, 0.0340509)
        (4000, 2.45727) += (4000, 0.0271101) -= (4000, 0.0271101)
        (4500, 2.47696) += (4500, 0.0370446) -= (4500, 0.0370446)
        (5000, 2.4534) += (5000, 0.0252748) -= (5000, 0.0252748)
        (5500, 2.46593) += (5500, 0.0339955) -= (5500, 0.0339955)
        (6000, 2.46015) += (6000, 0.0312743) -= (6000, 0.0312743)
        (6500, 2.45796) += (6500, 0.0255426) -= (6500, 0.0255426)
        (7000, 2.48182) += (7000, 0.0301296) -= (7000, 0.0301296)
        (7500, 2.4737) += (7500, 0.0341746) -= (7500, 0.0341746)
        (8000, 2.48062) += (8000, 0.0320543) -= (8000, 0.0320543)
        (8500, 2.46073) += (8500, 0.0296586) -= (8500, 0.0296586)
        (9000, 2.45516) += (9000, 0.0184123) -= (9000, 0.0184123)
        (9500, 2.48719) += (9500, 0.0257293) -= (9500, 0.0257293)
        (10000, 2.46401) += (10000, 0.0252067) -= (10000, 0.0252067)
      };
      \addlegendentry{\scriptsize $\alpha = 0.5$};
      \addplot[mark=x,error bars/.cd, y dir=both, y explicit] plot coordinates {
        (500, 2.19652) += (500, 0.0208963) -= (500, 0.0208963)
        (1000, 2.18045) += (1000, 0.0126127) -= (1000, 0.0126127)
        (1500, 2.17169) += (1500, 0.0118441) -= (1500, 0.0118441)
        (2000, 2.18613) += (2000, 0.0238793) -= (2000, 0.0238793)
        (2500, 2.16892) += (2500, 0.0120181) -= (2500, 0.0120181)
        (3000, 2.15511) += (3000, 0.010495) -= (3000, 0.010495)
        (3500, 2.14637) += (3500, 0.00713497) -= (3500, 0.00713497)
        (4000, 2.15018) += (4000, 0.00763181) -= (4000, 0.00763181)
        (4500, 2.16042) += (4500, 0.0148203) -= (4500, 0.0148203)
        (5000, 2.15281) += (5000, 0.010126) -= (5000, 0.010126)
        (5500, 2.16514) += (5500, 0.0234025) -= (5500, 0.0234025)
        (6000, 2.15046) += (6000, 0.00908857) -= (6000, 0.00908857)
        (6500, 2.14705) += (6500, 0.00811444) -= (6500, 0.00811444)
        (7000, 2.14941) += (7000, 0.00836024) -= (7000, 0.00836024)
        (7500, 2.15186) += (7500, 0.0100032) -= (7500, 0.0100032)
        (8000, 2.14572) += (8000, 0.00855014) -= (8000, 0.00855014)
        (8500, 2.14873) += (8500, 0.00732343) -= (8500, 0.00732343)
        (9000, 2.15439) += (9000, 0.0118692) -= (9000, 0.0118692)
        (9500, 2.14562) += (9500, 0.0081208) -= (9500, 0.0081208)
        (10000, 2.14981) += (10000, 0.00909546) -= (10000, 0.00909546)
      };
      \addlegendentry{\scriptsize $\alpha = 1.0$};
    \end{axis}
    \end{tikzpicture}  
  \caption{
    Plots of the scaled spectral norm $\sqrt{n/T_n} \pnorm{ \hat{N} - N }{}$ for different asymptotic scalings of $T_n$ and $n = 500, 1\,000, \ldots, 10\,000$. The \gls{BMC}'s parameters were chosen as $p = \tfrac{1}{10} ( (2, 3, 5); (3, 5, 2); (5, 2, 3) )$ and $\alpha = \tfrac{1}{3} (1, 1, 1)$.
    Each $95\%$-confidence interval was the result of approximately $48$ independent replications.
    The trajectory length was set to $T_n = [ n ( \ln{n} )^\alpha ]$, and $| \Gamma^{\mathrm{c}} | = 0$ states were trimmed. The curves correspond to $\alpha = -0.5, 0, 0.5, 1$ from top to bottom. 
    }
  \label{fig:Spectral_norm_as_a_function_of_n}
\end{figure}

\refFigure{fig:Spectral_norm_as_a_function_of_n} shows the scaled spectral norm $\sqrt{n/T_n} \pnorm{ \hat{N}- N }{}$ as a function of $n$ for different asymptotic scalings of $T_n$. We can expect from \refTheorem{thm:Singular_value_gap} that whenever $T_n / n = \omega(1)$, this scaled singular value gap should be $\Theta_{\mathbb{P}}(1)$. Observe that both in the dense regime $T_n = \Omega(n \ln{n} )$ (bottom black curve) and in the sparse regime $\Theta(n) = T_n = o( n \ln{(\ln{n})} )$ (middle red, yellow curves) this holds true. For the even sparser regime $T_n = o(n)$ (top blue curve), it looks like the scaled spectral norm may grow. Such a sparse regime is not covered by our analysis.

\section*{Acknowledgments}

This publication is part of the project \emph{Clustering and Spectral Concentration in Markov Chains} (with project number OCENW.KLEIN.324) of the research programme \emph{Open Competition Domain Science -- M} which is (partly) financed by the Dutch Research Council (NWO). The authors also thank Alexander Van Werde and Gianluca Kosmella for useful discussion.

\bibliography{biblio}
\bibliographystyle{amsxport}

\newpage

\appendix

\section{Proofs of \refSection{sec:Properties_of_BMCs}}

\subsection{Proof of \refLemma{lem:Asymptotic_upper_bound_on_Nxy}}
\label{sec:Proof_of_Asymptotic_upper_bound_on_Nxy}

Start by noting that $\min_{k, l \in [K]} p_{k,l} > 0$ independently of $n$. Consequently, by the Perron--Frobenious theorem, there exists an invariant distribution $\pi = (\pi_1, \ldots, \pi_{K})$ that also satisfies $\min_{k \in [K]} \pi_k > 0$ independently of $n$, since $\pi^\mathrm{T} p = \pi^{\mathrm{T}}$ \cite[Prop.~1]{sanders2020clustering}.

Now let $\mathfrak{n}_1, \mathfrak{n}_2$ be independent of $n$ and satisfy
\begin{equation}
  0
  <
  \mathfrak{n}_1 
  < 
  \min_{k,l \in [K]} 
  \frac{\pi_k p_{k,l}}{\alpha_k \alpha_l}
  \leq 
  \max_{k,l \in [K]} 
  \frac{\pi_k p_{k,l}}{\alpha_k \alpha_l}
  <
  \mathfrak{n}_2
  < 
  \infty
  .
\end{equation}
Let $x, y \in [n]$. Observe that
\begin{equation}
  \lim_{n \to \infty}
  \frac{ N_{x,y} }{ T_n / n^2 }
  = 
  \lim_{n \to \infty}
  n^2 \Pi_x P_{x,y} 
  = 
  \lim_{n \to \infty}
  n^2 \frac{ \pi_{\sigma(x)} p_{\sigma(x),\sigma(y)} }{ | \mathcal{V}_{\sigma(x)} | | \mathcal{V}_{\sigma(y)} | } 
  = 
  \frac{ \pi_{\sigma(x)} p_{\sigma(x),\sigma(y)} }{ \alpha_{\sigma(x)} \alpha_{\sigma(y)} } 
  .
\end{equation}
Consequently
\begin{equation}
  \mathfrak{n}_1
  <
  \lim_{n \to \infty}
  \frac{ N_{x,y} }{ T_n / n^2 }
  < 
  \mathfrak{n}_2
  .
\end{equation}

The conclusions pertaining to $P_{x,y}$ follow \emph{mutatis mutandis}. This completes the proof.
\QuodEratDemonstrandum

\subsection{Proof of \refLemma{lem:Bounded_degree_property}}
\label{sec:Proof_of__Bounded_degree_property}

Proving this next claim is sufficient: if $T_n = \omega(n)$, then there exists a constant $\mathfrak{b}_3 > 0$ independent of $n$ such that for any $x \in [n]$ and all sufficiently large $n$, 
\begin{equation}
  \probabilityBig{ 
  \bigl|
  \hat{N}_{[n],x} - N_{[n],x}
  \bigr|
  \geq
  \mathfrak{b}_3
  \frac{T_n}{n}
  }
  \leq
  2 \e{-2 \frac{T_n}{n}}
  .
  \label{eqn:Concentration_inequality_for_abs_hatNVx_minus_NVx}
\end{equation}  

\refLemma{lem:Bounded_degree_property}\ref{itm:Bounded_degrees_when_not_trimming} would namely follow almost immediately. If $T_n = \Omega(n \ln{n})$, then
\begin{align}
  \probabilityBig{
    \max_{y \in [n]} 
    \bigl\{ 
      \hat{N}_{y,[n]} \vee \hat{N}_{[n],y} 
    \bigr\} 
    \geq 
    \mathfrak{b}_1 \frac{T_n}{n} 
  }
  &
  \eqcom{i}
  \leq
  \probabilityBig{
    \max_{y \in [n]} 
    \hat{N}_{[n],y}
    \geq 
    \mathfrak{b}_1 \frac{T_n}{n} - 1
  }
  \eqcom{ii}
  \leq
  n \max_{x \in [n]} 
  \probabilityBig{ 
    \hat{N}_{[n],x} 
    \geq 
    \mathfrak{b}_1 \frac{T_n}{n} - 1 
  }
  \nonumber \\ &
  \eqcom{iii}
  \leq
  n \max_{x \in [n]} 
  \probabilityBig{ 
    | \hat{N}_{[n],x} - N_{[n],x} | 
    \geq 
    \mathfrak{b}_4 \frac{T_n}{n} 
  }
  \eqcom{\ref{eqn:Concentration_inequality_for_abs_hatNVx_minus_NVx}}
  \leq
  2 n \e{-2 \ln{n}}
\end{align}
for sufficiently large $n$ and some $\mathfrak{b}_4 > 0$ if $\mathfrak{b}_1$ is large enough. Here, we used (i) that for $y \in [n]\backslash \{ X_0, X_{T_n} \}$, $\hat{N}_{y,[n]} = \hat{N}_{[n],y}$ and for $y \in \{ X_0, X_{T_n} \}$, $| \hat{N}_{y,[n]} - \hat{N}_{[n],y} | \leq 1$; (ii) Boole's inequality; and (iii) that for all $x \in [n]$, $N_{[n],x} = \Theta(T_n/n)$.

We prove now \refLemma{lem:Bounded_degree_property}\ref{itm:Bounded_degrees_when_trimming}. We may assume without loss of generality that $|\Gamma^{\mathrm{c}}| = \lfloor n \e{-T_n/n} \rfloor \geq 1$ for otherwise we would be in the previous case. Equivalently, $T_n \leq n \ln{n}$. Let
\begin{equation}
  \mathcal{H}
  = 
  \Bigl\{ 
    y \in [n]
    :
    \hat{N}_{[n],y} 
    \geq 
    \mathfrak{b}_2 \frac{T_n}{n} - 1
  \Bigr\}
  \label{eqn:Set_of_states_of_at_least_the_indicated_degree}
\end{equation}
be the set of states of at least the indicated degree. By (iv) construction of the trimming procedure and the set $\mathcal{H}$, 
\begin{equation}
  \mathbb{P}\bigl( 
    \max_{y \in \Gamma} 
    \bigl\{ 
      \hat{N}_{y,\Gamma} \vee \hat{N}_{\Gamma,y} 
    \bigr\} 
    \geq 
    \mathfrak{b}_2
    \frac{T_n}{n}
  \bigr)  
  \eqcom{i}
  \leq
  \mathbb{P}\bigl( \max_{y \in \Gamma} \hat{N}_{y,[n]} \geq \mathfrak{b}_2 \frac{T_n}{n} - 1 \bigr)  
  \eqcom{iv}  
  = 
  \probability{ | \mathcal{H} | > | \Gamma^{\mathrm{c}} | }
  .
\end{equation}
By (v) Markov's inequality,
\begin{align}
  \probability{ 
    | \mathcal{H} | 
    > 
    | \Gamma^{\mathrm{c}} | 
    }
    & =
  \probabilityBig{ 
    \sum_{y \in [n]} 
    \indicatorBig{ \hat{N}_{[n],y} \geq \mathfrak{b}_2 \frac{T_n}{n} - 1 }    
    \geq
    | \Gamma^{\mathrm{c}} | + 1
    }  
    \eqcom{v}  
    \leq 
    \frac{1}{ | \Gamma^{\mathrm{c}} | + 1 } \sum_{x \in [n]} \probabilityBig{ \hat{N}_{[n],x} \geq \mathfrak{b}_2 \frac{T_n}{n} - 1 }   \nonumber \\
    & \leq 
    \frac{1}{ | \Gamma^{\mathrm{c}} | + 1 }
    \sum_{x \in [n]}\probabilityBig{
  \bigl|
  \hat{N}_{[n],x} - N_{[n],x}
  \bigr|
  \geq
  \mathfrak{b}_5
  \frac{T_n}{n}
  }
    ,
\end{align}
for some $\mathfrak{b}_5 > 0$ if $\mathfrak{b}_2$ is large enough. Finally apply \eqref{eqn:Concentration_inequality_for_abs_hatNVx_minus_NVx} and lower bound $| \Gamma^{\mathrm{c}} | \geq n \e{-T_n/n} - 1$ to find that
\begin{equation}
  \probabilityBig{ 
    \max_{y \in \Gamma} 
    \bigl\{ 
      \hat{N}_{y,\Gamma} \vee \hat{N}_{\Gamma,y} 
    \bigr\} 
    \geq 
    \mathfrak{b}_2 \frac{T_n}{n}
  }    
  \leq
  2\e{-\frac{T_n}{n}}
  .
  \label{eqn:Paulins_bound__Probability_bound_on_the_maximum_degree_after_trimming}
\end{equation}
The final inequality followed because $T_n \leq n \ln{n}$.
What remains is to prove \eqref{eqn:Concentration_inequality_for_abs_hatNVx_minus_NVx}.

\noindent
\emph{Proof of \eqref{eqn:Concentration_inequality_for_abs_hatNVx_minus_NVx}.}
This is a tightening of \cite[SM1(20)]{sanders2020clustering} by a logarithmic term, and the argument is a straightforward modification. Let $f(\cdot) = \indicator{ \cdot = x }$ be such that $\sum_{t=1}^{T_n} f(X_t) = \hat{N}_{[n],x}$. Clearly for $x \in [n]$, $| f(X_t) - \expectationWrt{f(X_t)}{\Pi} | = | \indicator{ X_t = x } - \Pi_x | \leq 1 = C$ say. Moreover, for $x \in [n]$,
\begin{align}
  V_f
  =
  \variance{ f(X_t) }
  &
  =
  \expectation{ ( \indicator{ X_t = x } - \Pi_x )^2 }
  \nonumber \\ &
  =
  \expectation{ \indicator{ X_t = x } }
  -
  \Pi_x^2
  =
  \Pi_x ( 1 - \Pi_x )
  \leq
  \frac{\pi_{\sigma(x)}}{ | \mathcal{V}_\sigma(x) | }
  \leq 
  \max_{ k \in [K] }
  \frac{ \pi_k }{ \alpha_k n}
  + 
  \smallObig{ \frac{1}{n} }
  .
  \label{eqn:Paulins_bound__Probability_to_bound_the_maximum_outdegree}
\end{align}
By \cite[Thm~3.4]{paulin2015concentration},
\begin{equation}
  \probabilityBig{
  |
  \hat{N}_{[n],x} - N_{[n],x}
  |
  \geq z
  }
  \leq
  2
  \exp{
    \Bigl(
    - \frac{ z^2 \gamma_{\mathrm{ps}}}{ 8(T_n+1/\gamma_{\mathrm{ps}}) V_f + 20 z C }
    \Bigr)
  }
  .
\end{equation}
Let $\mathfrak{b}_5 > 0$ and specify $z = \mathfrak{b}_5 T_n/n$. Recall that $\gamma_{\mathrm{ps}} \geq 1/(8\eta+2)$ \cite[(26)]{sanders2020clustering}. Therefore
\begin{equation}
  \probabilityBig{
    |
    \hat{N}_{[n],x} - N_{[n],x}
    |
    \geq 
    \mathfrak{b}_5 \frac{T_n}{n}
  }
  \leq
  2
  \exp{
    \Bigl(
    - \frac{T_n}{n} \cdot \frac{ \mathfrak{b}_5^2 / (8\eta+2) }{ 20 \mathfrak{b}_5 + 8 \max_{k \in [K]} \pi_k / \alpha_k + \smallO{1} + \bigO{1/T_n} }
    \Bigr)
  }
  .
  \label{eqn:Paulins_bound__Probability_to_bound_the_maximum_outdegree__Applied}    
\end{equation}
Choosing $\mathfrak{b}_5$ sufficiently large completes the proof.
\QuodEratDemonstrandum

\subsection{Proof of \refProposition{prop:discrepancy_property_N_hat_holds}}
\label{sec:Appendix__Proof_of_the_discrepancy_property}

We will prove \refProposition{prop:discrepancy_property_N_hat_holds} by modifying the proof approach of \cite[Lem.~4.2]{lei2015consistency}. The key difference is that in the present setting the entries of $\hat{N}$ are not independent. A similar argument can also be found in \cite[Lem.~12]{sanders2020clustering} for a different definition of the discrepancy property. The discrepancy property differs in the present paper so as to provide a tighter bound.

Observe that 
\begin{equation}
  \expectation{ e(\mathcal{A}, \mathcal{B}) }
  =
  \sum_{x \in \mathcal{A}} \sum_{y \in \mathcal{B}} 
  \expectation{ \hat{N}_{x,y} }
  = 
  \sum_{x \in \mathcal{A}} \sum_{y \in \mathcal{B}} 
  N_{x,y}
  .
\end{equation}
We therefore have as an immediate corollary of \refLemma{lem:Asymptotic_upper_bound_on_Nxy}:

\begin{corollary}
  \label{cor:Expected_discrepancy_property_is_bounded_from_below_by_certain_order}

  There exist constants $0 < \mathfrak{m}_1 < \mathfrak{m}_2 < \infty$ independent of $n$ and an integer $m \in \naturalNumbersPlus$ such that for all $n \geq m$ and all $\mathcal{A}, \mathcal{B} \subseteq [n]$, $\mathfrak{m}_1 |\mathcal{A}| |\mathcal{B}| T_n / n^2 \leq \mu(\mathcal{A}, \mathcal{B}) \leq \mathfrak{m}_2 |\mathcal{A}| |\mathcal{B}| T_n / n^2$.
\end{corollary}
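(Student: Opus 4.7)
The statement is declared to be an immediate corollary, and indeed the plan is essentially a one-line unpacking. I would start by recalling the definition $\mu(\mathcal{A},\mathcal{B}) = \expectation{e(\mathcal{A},\mathcal{B})} = \sum_{x \in \mathcal{A}}\sum_{y \in \mathcal{B}} N_{x,y}$, which the paragraph above the corollary has already established by linearity of expectation together with the definition of $\hat{N}$ in \eqref{eqn:Definition_Nhat} and the fact that $N = \expectation{\hat{N}}$ when $X_0 \sim \Pi$.

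Next I would invoke \refLemma{lem:Asymptotic_upper_bound_on_Nxy}: there exist constants $0 < \mathfrak{n}_1 < \mathfrak{n}_2 < \infty$ independent of $n$ and an integer $m \in \naturalNumbersPlus$ such that for every $n \geq m$ and every pair $(x,y) \in [n]^2$ one has $\mathfrak{n}_1 T_n/n^2 \leq N_{x,y} \leq \mathfrak{n}_2 T_n/n^2$. Applying this entrywise bound inside the double sum and summing over $x \in \mathcal{A}$ and $y \in \mathcal{B}$ gives, for all $n \geq m$ and all $\mathcal{A},\mathcal{B} \subseteq [n]$,
\begin{equation*}
    \mathfrak{n}_1 \, |\mathcal{A}||\mathcal{B}| \, \frac{T_n}{n^2}
    \;\leq\; \mu(\mathcal{A},\mathcal{B}) \;\leq\;
    \mathfrak{n}_2 \, |\mathcal{A}||\mathcal{B}| \, \frac{T_n}{n^2}.
\end{equation*}
Setting $\mathfrak{m}_1 = \mathfrak{n}_1$ and $\mathfrak{m}_2 = \mathfrak{n}_2$ and reusing the same $m$ from \refLemma{lem:Asymptotic_upper_bound_on_Nxy} completes the proof.

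There is no real obstacle here: the only subtlety is that the bounds in \refLemma{lem:Asymptotic_upper_bound_on_Nxy} must be uniform in the pair $(x,y)$ so that summing preserves the shape of the estimate, which is precisely what that lemma already asserts. The edge case $\mathcal{A} = \emptyset$ or $\mathcal{B} = \emptyset$ is handled trivially since both sides of the inequality vanish.
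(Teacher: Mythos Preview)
Your proof is correct and matches the paper's approach exactly: the paper simply declares this an immediate corollary of \refLemma{lem:Asymptotic_upper_bound_on_Nxy}, and your unpacking via the entrywise bounds and the displayed identity $\mu(\mathcal{A},\mathcal{B}) = \sum_{x\in\mathcal{A}}\sum_{y\in\mathcal{B}} N_{x,y}$ is precisely the intended one-line argument.
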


We will prove the following lemma:

\begin{lemma}
  \label{lemma:Bernoulli_concentration_Markov_chains}
  
    There exists a constant $k_0 > 0$ independent of $n$ and an integer $m \in \naturalNumbersPlus$ 
    such that
    for all $n \geq m$, 
    all $k \geq k_0$, 
    and all $\mathcal{A}, \mathcal{B} \subseteq [n]$, 
  \begin{equation}
    \probability{
      e(\mathcal{A}, \mathcal{B})  
      \geq 
      k \mu(\mathcal{A}, \mathcal{B}) 
    }
    \leq 
    2 \exp{ \Bigl( - \tfrac{1}{4} \mu(\mathcal{A}, \mathcal{B}) k\ln{k} \Bigr) }
    .
  \end{equation}
\end{lemma}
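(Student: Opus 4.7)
\textbf{Proof plan for \refLemma{lemma:Bernoulli_concentration_Markov_chains}.} The plan is to cast $e(\mathcal{A}, \mathcal{B})$ as an additive functional of a fast-mixing Markov chain and then apply a Bennett-type concentration inequality to obtain the $k \ln k$ rate. First I would rewrite
\[
  e(\mathcal{A}, \mathcal{B})
  = \sum_{t=0}^{T_n - 1} f_{\mathcal{A},\mathcal{B}}(X_t, X_{t+1}),
  \qquad
  f_{\mathcal{A},\mathcal{B}}(x, y) = \indicator{ x \in \mathcal{A}, y \in \mathcal{B} },
\]
view it as a functional of the two-dimensional Markov chain $\process{(X_t, X_{t+1})}{t \geq 0}$, which is also irreducible, aperiodic, stationary at $\Pi \otimes P$, and whose pseudo spectral gap is bounded below by the same constant $1/(2(1+4\eta))$ used throughout the paper. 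In stationarity, $\expectationWrt{f_{\mathcal{A},\mathcal{B}}}{\Pi} = \mu(\mathcal{A},\mathcal{B})/T_n$; since $f_{\mathcal{A},\mathcal{B}} \in \{0,1\}$ we have $\variance{f_{\mathcal{A},\mathcal{B}}} \leq \expectation{f_{\mathcal{A},\mathcal{B}}} = \mu(\mathcal{A},\mathcal{B})/T_n$ and the bound $C = 1$.

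Next I would apply a Bennett-type tail bound for sums of bounded functionals of a Markov chain with pseudo spectral gap $\gamma_{\mathrm{ps}}$. Concretely, for the centered partial sum $S_{T_n} = e(\mathcal{A},\mathcal{B}) - \mu(\mathcal{A},\mathcal{B})$, such a bound gives
\[
  \probabilityBig{ S_{T_n} \geq z }
  \leq
  2 \exp \Bigl( - c\, \mathcal{V}\, h\bigl( z C / \mathcal{V} \bigr) \Bigr),
  \qquad
  h(u) = (1+u)\ln(1+u) - u,
\]
where $\mathcal{V}$ is the asymptotic variance (here $\mathcal{V} \leq (\text{const})\cdot \mu(\mathcal{A},\mathcal{B})$) and $c$ depends only on $\gamma_{\mathrm{ps}}$. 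Setting $z = (k-1) \mu(\mathcal{A},\mathcal{B})$ turns the event $\{e(\mathcal{A},\mathcal{B}) \geq k \mu(\mathcal{A},\mathcal{B})\}$ into the deviation event above, and yields a tail of order $\exp(-c\, \mu(\mathcal{A},\mathcal{B})\, h(k-1))$. Since $h(k-1) \sim k \ln k$ as $k \to \infty$, choosing $k_0$ large enough so that $c\, h(k-1) \geq \tfrac{1}{4} k \ln k$ for all $k \geq k_0$ delivers the stated inequality.

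The key difficulty is establishing the Bennett-type inequality in the Markov chain setting. The Bernstein-type bound \eqref{eqn:Paulins_concentration_inequality} quoted from \cite{paulin2015concentration} only gives a tail of the form $\exp(-(k-1)\mu / \mathrm{const})$ for large $k$, i.e.\ linear-in-$k$ decay in the exponent, which is not enough for the union bound over $(\mathcal{A}, \mathcal{B}) \in \mathcal{P}([n])^2$ that is needed for \refProposition{prop:discrepancy_property_N_hat_holds}. To get the sharper $k \ln k$ rate, I would invoke the Bennett-type counterpart of \cite[Thm.~3.4]{paulin2015concentration} (proved there via an MGF bound that uses the pseudo spectral gap in place of independence), or, if a direct citation is not available, derive the requisite moment generating function bound by combining the $\gamma_{\mathrm{ps}}$-based spectral comparison with the standard Bennett MGF inequality $\expectation{\e{\lambda (f - \expectation{f})}} \leq \exp(V_f(\e{\lambda C} - 1 - \lambda C)/C^2)$ applied to the two-dimensional chain. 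This is the main obstacle; the remaining steps (substituting $z$, checking the asymptotics of $h$ near infinity to justify the constant $1/4$, and selecting $k_0$) are routine once the MGF bound is in hand.
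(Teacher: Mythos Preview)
Your overall plan --- Chernoff method plus a Bennett-type MGF control to extract the $k\ln k$ rate --- is the right instinct, and you correctly identify that the Bernstein bound \eqref{eqn:Paulins_concentration_inequality} is too weak here. However, the paper does \emph{not} go through a Bennett inequality for Markov chains based on the pseudo spectral gap, and you should be aware that the tool you are reaching for is precisely the step you yourself flag as the ``main obstacle''. Paulin's Theorem~3.4 is Bernstein-type; the Bennett-type counterpart you allude to would carry constants depending on $\gamma_{\mathrm{ps}}$, so even if you locate or derive such a statement, you would in general obtain $\exp(-c_{\gamma_{\mathrm{ps}}}\,\mu\, k\ln k)$ for some $c_{\gamma_{\mathrm{ps}}}$ that need not be $\geq 1/4$, and then the lemma as stated (with the explicit $1/4$) is not proved. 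The ``derive the MGF bound via spectral comparison'' sketch is also vague: turning the standard Bennett MGF bound into one valid under the pseudo spectral gap is not a one-line argument.

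The paper's route is more elementary and sidesteps the spectral-gap machinery entirely. It splits the sum over $t$ into even and odd times, so that within each half the consecutive indicator variables $\indicator{X_t\in\mathcal{A},\,X_{t+1}\in\mathcal{B}}$ are separated by a gap of length~$2$. Conditioning on $\mathcal{F}_{T_n-2}$ and using only the Markov property together with the uniform bound $P_{x,y}\leq \mathfrak{p}_2/n$ from \refLemma{lem:Asymptotic_upper_bound_on_Nxy}, one gets
\[
  \expectationBig{ \e{2h\,\indicator{X_{T_n-1}\in\mathcal{A},\,X_{T_n}\in\mathcal{B}}} \Bigm\vert \mathcal{F}_{T_n-2} }
  \leq 1 + \e{2h}\,\mathfrak{p}_2\,\frac{|\mathcal{A}||\mathcal{B}|}{n^2},
\]
uniformly in the past. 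Iterating and using $1+z\leq \e{z}$ yields $\expectation{\e{2he_0}}\leq \exp\bigl(\tfrac{T_n}{2}\,\e{2h}\mathfrak{p}_2|\mathcal{A}||\mathcal{B}|/n^2\bigr)$, i.e.\ exactly the i.i.d.\ Bennett MGF bound with parameter $\mathfrak{p}_2|\mathcal{A}||\mathcal{B}|/n^2\asymp \mu(\mathcal{A},\mathcal{B})/T_n$. Optimizing over $h$ at $z=(k-1)\mu(\mathcal{A},\mathcal{B})$ and using \refCorollary{cor:Expected_discrepancy_property_is_bounded_from_below_by_certain_order} to compare $\mu$ and $T_n|\mathcal{A}||\mathcal{B}|/n^2$ gives the $\tfrac14 k\ln k$ exponent for $k\geq k_0$. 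The point is that the even/odd decoupling plus the \emph{uniform} transition-probability bound replaces the need for any spectral-gap based concentration; this is what your proposal is missing.
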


The fact that the discrepancy property holds with high probability for both $\hat{N}$ and $\hat{N}_{\Gamma}$ follows namely from \refCorollary{cor:Expected_discrepancy_property_is_bounded_from_below_by_certain_order}, \refLemma{lemma:Bernoulli_concentration_Markov_chains}, whenever $T_n = \Omega(n \log(n))$ and $T_n = \omega(n)$ respectively:

\subsubsection{Proof of \refProposition{prop:discrepancy_property_N_hat_holds} --- Case \texorpdfstring{$\hat{N}$}{Nhat}, i.e., without trimming.}

We consider first the case without trimming. For notational convenience, let $\Delta = \max_{y \in [n]} \bigl\{ \hat{N}_{[n],y} \vee \hat{N}_{y,[n]} \bigr\}$. Also, we assume that $|\mathcal{A}| \leq |\mathcal{B}|$ without loss of generality.

\paragraph{Subcases $|\mathcal{B}| > n/\e{}$:}
For this subcase, the discrepancy property is satisfied since \refDefinition{def:Discrepancy_property}\ref{itm:Discrepancy_property__i} holds. Observe first that for all $\mathcal{A}, \mathcal{B} \subseteq [n]$,
\begin{equation}
  e(\mathcal{A}, \mathcal{B}) 
  \eqcom{\ref{eqn:def_e_I_J}}
  = 
  \sum_{i \in \mathcal{A}} \sum_{j \in \mathcal{B}} 
  \hat{N}_{ij}
  \leq 
  \Delta \min \{ |\mathcal{A}|, |\mathcal{B}| \}
  = 
  \Delta |\mathcal{A}|
  .
  \label{eqn:Intermediate__Subcase_B_large__start}
\end{equation}
We therefore have in particular that for all $\mathcal{A}, \mathcal{B} \subseteq [n]$ such that $|\mathcal{B}| > n/\e{}$, 
\begin{equation}
  \frac{e(\mathcal{A}, \mathcal{B})n^2}{ |\mathcal{A}||\mathcal{B}| T_n } 
  \leq 
  \frac{\e{} e(\mathcal{A}, \mathcal{B})n}{ |\mathcal{A}| T_n } 
  \leq 
  \frac{\e{} \Delta |\mathcal{A}| n}{ |\mathcal{A}| T_n } 
  \leq 
  \mathfrak{b}_3 \e{}
  \label{eqn:Intermediate__Subcase_B_large__end}
\end{equation}
since $\Delta \leq \mathfrak{b}_3 T_n/n$ by assumption.

\paragraph{Subcases $0 < |\mathcal{B}| \leq n/\e{}$:}
\refLemma{lemma:Bernoulli_concentration_Markov_chains} tells us that there exists a constant $k_0 > 0$ independent of $n$ such that for all sufficiently large $n$, and all $k \geq k_0$, $\probability{ e(\mathcal{A}, \mathcal{B}) > k \mu(\mathcal{A}, \mathcal{B}) } \leq 2\exp(-k \ln(k) \mu(\mathcal{A}, \mathcal{B})/4)$. We may presume that $k_0 \geq \max(1, 1/\mathfrak{m}_2, \mathfrak{m}_2)$, where we let $0 < \mathfrak{m}_1 < \mathfrak{m}_2 < \infty$ be as in \refCorollary{cor:Expected_discrepancy_property_is_bounded_from_below_by_certain_order}. The reason for this choice will become apparent later on.

Let $n$ be sufficiently large, and let $\mathfrak{c}_1, \mathfrak{c}_2 > 0$ be constants independent of $n$ that we will chose later. Denote by $t^\star(\mathcal{A}, \mathcal{B}) > 0$ the unique solution to $t \ln{t} = \mathfrak{m}_1 \mathfrak{c}_2 |\mathcal{B}| \ln{(n/|\mathcal{B}|)} /(2 \mathfrak{m}_2 \mu(\mathcal{A}, \mathcal{B})) > 0$, which exists and is unique because the function $t \ln{t}$ is monotonically increasing for $t \geq 1$. Define $k^\star(\mathcal{A}, \mathcal{B}) = \max \{ k_{0}, t^\star(\mathcal{A}, \mathcal{B}) \}$ and consider the event
\begin{equation}
  \mathcal{E}
  =
  \bigcap_{ \mathcal{A}, \mathcal{B} \subseteq [n]: |\mathcal{A}| \leq |\mathcal{B}| \leq n/\e{} } 
  \bigl\{
    e(\mathcal{A}, \mathcal{B})
    \leq 
    \mu(\mathcal{A}, \mathcal{B}) k^\star(\mathcal{A}, \mathcal{B})
  \bigr\}
  .  
  \label{eqn:Intermediate__Subcase_B_small__start}
\end{equation}

We claim that if $\mathcal{E}$ holds, then $\hat{N}$ is discrepant. Specifically: for all pairs $(\mathcal{A}, \mathcal{B})$ such that $k^\star(\mathcal{A}, \mathcal{B}) = k_0$, \refDefinition{def:Discrepancy_property}\ref{itm:Discrepancy_property__i} holds. Indeed, for any such pair, on event $\mathcal{E}$,
\begin{equation}
  e(\mathcal{A}, \mathcal{B}) 
  \leq 
  k_0 \mu(\mathcal{A}, \mathcal{B}) 
  \leq 
  k_0 \mathfrak{m}_2 \frac{T_n |\mathcal{A}||\mathcal{B}|}{n^2} \leq \mathfrak{c}_1 \frac{T_n |\mathcal{A}||\mathcal{B}|}{n^2}
  \label{eqn:Intermediate__kStarAB_equals_k0}
\end{equation}
by \refCorollary{cor:Expected_discrepancy_property_is_bounded_from_below_by_certain_order} if $\mathfrak{c}_1 \geq k_0 \mathfrak{m}_2$. 
Furthermore: for all pairs $(\mathcal{A}, \mathcal{B})$ such that $k^\star(\mathcal{A}, \mathcal{B}) =  t^\star(\mathcal{A}, \mathcal{B}) > k_0 \geq \max(1, 1/\mathfrak{m}_2)$, \refDefinition{def:Discrepancy_property}\ref{itm:Discrepancy_property__ii} holds. To see this, consider that for any such pair
$
  e(\mathcal{A}, \mathcal{B}) 
  \leq 
   \mu(\mathcal{A}, \mathcal{B}) t^\star(\mathcal{A}, \mathcal{B})
$ by event $\mathcal{E}$.
Therefore, again by \refCorollary{cor:Expected_discrepancy_property_is_bounded_from_below_by_certain_order},
\begin{equation}
  \frac{ n^2e(\mathcal{A}, \mathcal{B})}{T_n |\mathcal{A}||\mathcal{B}|} 
  \leq 
  \mathfrak{m}_2 t^\star(\mathcal{A}, \mathcal{B})
  \label{eqn:Intermediate__Lower_bound_to_tStar}
\end{equation}
on event $\mathcal{E}$.
Remark now that for $1 \leq t \leq \mathfrak{m}_2 t^\star$, $t \ln{t} \leq (\mathfrak{m}_2 t^\star) \ln{\mathfrak{m}_2 t^\star}$ by monotonicity. Furthermore from the lower bound on $t^{\star}$, $\mathfrak{m}_2 t^\star \geq \mathfrak{m}_2 \max(1, 1/\mathfrak{m}_2, \mathfrak{m}_2) \geq 1$; hence, for $0 \leq t < 1$, $t \ln{t} \leq (\mathfrak{m}_2 t^\star) \ln{\mathfrak{m}_2 t^\star}$ also (observe that the left-hand side of the inequality is nonpositive and the right-hand side nonnegative). 
Therefore, on the event $\mathcal{E}$,
\begin{equation}
  \frac{ n^2e(\mathcal{A}, \mathcal{B})}{ T_n |\mathcal{A}||\mathcal{B}| } \ln{ \frac{n^2e(\mathcal{A}, \mathcal{B})}{ T_n |\mathcal{A}||\mathcal{B}| } } 
  \eqcom{\ref{eqn:Intermediate__Lower_bound_to_tStar}}
  \leq 
  (\mathfrak{m}_2 t^\star) \ln{\mathfrak{m}_2 t^\star} \eqcom{iii}\leq  2\mathfrak{m}_2 t^\star \ln{t^\star}
  \eqcom{iv}
  =
  \frac{\mathfrak{m}_1 \mathfrak{c}_2 |\mathcal{B}|}{\mu(\mathcal{A}, \mathcal{B})} \ln{\frac{n}{ |\mathcal{B}|}}
  \label{eqn:Intermediate__elne_upper_bound}
\end{equation}
because 
(iii) $\ln{\mathfrak{m}_2 t^{\star}} \leq 2\ln{t^{\star}}$ since $t^{\star} \geq \max(1,1/\mathfrak{m}_2, \mathfrak{m}_2) \geq \mathfrak{m}_2$ 
and 
(iv) of the definition of $t^\star(\mathcal{A}, \mathcal{B})$.
After (v) multiplying \eqref{eqn:Intermediate__elne_upper_bound} by $(T_n/n^2) |\mathcal{A}||\mathcal{B}|$ and (vi) utilizing \refCorollary{cor:Expected_discrepancy_property_is_bounded_from_below_by_certain_order} one final time, observe that
\begin{equation}
  e(\mathcal{A}, \mathcal{B})  \ln{ \frac{n^2e(\mathcal{A}, \mathcal{B})}{ T_n |\mathcal{A}||\mathcal{B}| } } 
  \eqcom{v}
  \leq 
  \mathfrak{m}_1 \mathfrak{c}_2 |\mathcal{B}| \frac{ (T_n/n^2) |\mathcal{A}||\mathcal{B}| }{\mu(\mathcal{A}, \mathcal{B})} \ln{ \frac{n}{ |\mathcal{B}|} }
  \eqcom{vi}
  \leq 
  \mathfrak{c}_2 |\mathcal{B}| \ln{ \frac{n}{ |\mathcal{B}|} }
  .
  \label{eqn:Intermediate__Discrepancy_holds_when_kStarAB_equals_tStarAB}
\end{equation}

What remains is to prove that the event $\mathcal{E}$ holds at least with probability $1-1/n$. We can do so using the De Morgan identities, which imply that
\begin{equation}
  \probability{ \mathcal{E} }
  = 
  1 - 
  \probabilityBig{
    \bigcup_{ \mathcal{A}, \mathcal{B} \subseteq [n]: |\mathcal{A}| \leq |\mathcal{B}| \leq n/\e{} } 
    \bigl\{
      e(\mathcal{A}, \mathcal{B})
      > 
      \mu(\mathcal{A}, \mathcal{B}) k^\star     
    \bigr\}
  }
  ,
  \label{eqn:Intermediate__After_applying_the_De_Morgan_identities}
\end{equation}
and then upper bounding the right term by $1/n$. 
By (i) Boole's inequality, (ii) \refLemma{lemma:Bernoulli_concentration_Markov_chains}, and (iii) the definition of $k^\star(\mathcal{A}, \mathcal{B})$ and \refCorollary{cor:Expected_discrepancy_property_is_bounded_from_below_by_certain_order}, for sufficiently large $n$,
\begin{align}
  & 
  \probabilityBig{
    \bigcup_{ \mathcal{A}, \mathcal{B} \subseteq [n]: |\mathcal{A}| \leq |\mathcal{B}| \leq n/\e{} } 
    \bigl\{
      e(\mathcal{A}, \mathcal{B})
      > 
      \mu(\mathcal{A}, \mathcal{B}) k^\star(\mathcal{A}, \mathcal{B})     
    \bigr\}
  }
  \nonumber \\ &
  \eqcom{i}
  \leq
  \sum_{ \mathcal{A}, \mathcal{B} \subseteq [n]: |\mathcal{A}| \leq |\mathcal{B}| \leq n/\e{} }
  \probabilityBig{
    e(\mathcal{A}, \mathcal{B})
    > 
    \mu(\mathcal{A}, \mathcal{B}) k^\star(\mathcal{A}, \mathcal{B})
  }
  \nonumber \\ &
  \eqcom{ii}
  \leq
  \sum_{ \mathcal{A}, \mathcal{B} \subseteq [n]: |\mathcal{A}| \leq |\mathcal{B}| \leq n/\e{} }
  2\exp{ \Bigl( - \tfrac{1}{4} \mu(\mathcal{A}, \mathcal{B}) k^\star(\mathcal{A}, \mathcal{B}) \ln{k^\star(\mathcal{A}, \mathcal{B})} \Bigr) }
  \nonumber \\ &
  \eqcom{iii}
  \leq 
  \sum_{ \mathcal{A}, \mathcal{B} \subseteq [n]: |\mathcal{A}| \leq |\mathcal{B}| \leq n/\e{} }
  2\exp{ \Bigl( - \frac{\mathfrak{m}_1 \mathfrak{c}_2}{8 \mathfrak{m}_2} |\mathcal{B}| \ln{\frac{n}{|\mathcal{B}|}} \Bigr) }
  .
  \label{eqn:Intermediate__Bound_on_union_of_e_gtr_kstar}
\end{align}
Finally: by (iv) collecting terms and upper bounding their numbers, and utilizing (v) $\binom{n}{s} \leq (n \e{} /s)^{s}$ and (vi) for $t \in [1, n/\e{}]$, $t \leq t \ln(n/t)$, we find that for sufficiently large $n$,
\begin{align}
  \eqref{eqn:Intermediate__Bound_on_union_of_e_gtr_kstar}
  &
  \eqcom{iv}
  \leq 
  \sum_{1 \leq a \leq b \leq n/\e{}}
  2\binom{n}{a} \binom{n}{b} \exp{ \Bigl( - \frac{\mathfrak{m}_1 \mathfrak{c}_2}{8 \mathfrak{m}_2} b \ln{\frac{n}{b}} \Bigr) }
  \nonumber \\ &
  \eqcom{v} \leq 
  \sum_{1 \leq a \leq b \leq n/\e{}}
  2\Bigl( \frac{n\e{}}{a} \Bigr)^{a} \Bigl(\frac{n\e{}}{b} \Bigr)^{b} 
  \exp{ \Bigl( - \frac{\mathfrak{m}_1 \mathfrak{c}_2}{8 \mathfrak{m}_2} b \ln{ \frac{n}{b} } \Bigr) }
  \nonumber \\ &
  \leq 
  \sum_{1 \leq a \leq b \leq n/\e{}}
  2\exp{ \Bigl(  a + a \ln{ \frac{n}{a} } + b + b \ln{ \frac{n}{b} } - \frac{\mathfrak{m}_1 \mathfrak{c}_2}{8 \mathfrak{m}_2} b \ln{ \frac{n}{b} } \Bigr) }
  \nonumber \\ &
  \leq 
  \sum_{1 \leq a \leq b \leq n/\e{}}
  2\exp{ \Bigl( 2b + 2b \ln{ \frac{n}{b} } - \frac{\mathfrak{m}_1 \mathfrak{c}_2}{8 \mathfrak{m}_2} b \ln{ \frac{n}{b} } \Bigr) }
  \nonumber \\ &
  \eqcom{vi}
  \leq 
  \sum_{1 \leq a \leq b \leq n/\e{}}
  2\exp{ \Bigl( 4b \ln{ \frac{n}{b} } - \frac{\mathfrak{m}_1 \mathfrak{c}_2}{8 \mathfrak{m}_2} b \ln{ \frac{n}{b} } \Bigr) }
  \leq 
  \sum_{1 \leq a \leq b \leq n/\e{}}
  2\exp{ \Bigl(  - \Bigl( \frac{\mathfrak{m}_1 \mathfrak{c}_2}{8 \mathfrak{m}_2} - 4 \Bigr) b \ln{ \frac{n}{b} } \Bigr) }
  \nonumber \\ &
  \leq 
  \sum_{1 \leq a \leq b \leq n/\e{}}
  2n^{-\frac{\mathfrak{m}_1 \mathfrak{c}_2}{8 \mathfrak{m}_2} + 4} 
  \leq 
  n^{-\frac{\mathfrak{m}_1 \mathfrak{c}_2}{8 \mathfrak{m}_2} + 7}
  .
  \label{eqn:Intermediate__Subcase_B_small__end}
\end{align}
The event $\mathcal{E}$ thus holds at least with probability $1-1/n$ for sufficiently large $n$ if we chose the constants in \refDefinition{def:Discrepancy_property} to be $\mathfrak{d}_2 = \mathfrak{c}_2 \geq 64\mathfrak{m}_2/\mathfrak{m}_1$. Finally, from \eqref{eqn:Intermediate__Subcase_B_large__end} and \eqref{eqn:Intermediate__kStarAB_equals_k0}  we also need to choose $\mathfrak{d}_1 = \mathfrak{c}_1 \geq \max \{ \mathfrak{b}_3 \e{}, k_0 \mathfrak{m}_2 \}$. This completes the proof.
\QuodEratDemonstrandum

\subsubsection{Proof of \refProposition{prop:discrepancy_property_N_hat_holds} --- Case \texorpdfstring{$\hat{N}_\Gamma$}{NhatGamma}, i.e., with trimming.}

We consider second the case with trimming. For notational convenience, let now $\Delta_\Gamma = \max_{y \in [n]} \bigl\{ \hat{N}_{\Gamma,y} \allowbreak \vee \hat{N}_{y,\Gamma} \bigr\}$. Again, we assume that $|\mathcal{A}| \leq |\mathcal{B}|$ without loss of generality.

\paragraph{Subcases $|\mathcal{B}| > n/\e{}$:}
Equations \eqref{eqn:Intermediate__Subcase_B_large__start}--\eqref{eqn:Intermediate__Subcase_B_large__end} hold \emph{mutatis mutandis} after replacing $e(\mathcal{A}, \mathcal{B})$ by $e_\Gamma(\mathcal{A}, \mathcal{B})$, $\hat{N}$ by $\hat{N}_\Gamma$, $\Delta$ by $\Delta_\Gamma$ and $\mathfrak{b}_3$ by $\mathfrak{b}_4$.

\paragraph{Subcases $\mathcal{B} \leq n/\e{}$:} Equations \eqref{eqn:Intermediate__Subcase_B_small__start}--\eqref{eqn:Intermediate__Subcase_B_small__end} hold \emph{mutatis mutandis} after replacing $e(\mathcal{A}, \mathcal{B})$ by $e_\Gamma(\mathcal{A}, \mathcal{B})$. This is because $e_{\Gamma}(\mathcal{A}, \mathcal{B}) \leq e(\mathcal{A}, \mathcal{B})$. 
In particular, utilize the monotonicity of $t \ln{t}$ for proving counterparts to \eqref{eqn:Intermediate__kStarAB_equals_k0}--\eqref{eqn:Intermediate__Discrepancy_holds_when_kStarAB_equals_tStarAB}, and the inequality
\begin{align}
  \probability{
    e_{\Gamma}(\mathcal{A}, \mathcal{B}) 
    >
    \mu(\mathcal{A}, \mathcal{B}) k^\star
  } 
  & 
  \leq 
  \probability{
    e(\mathcal{A}, \mathcal{B}) 
    >
    \mu(\mathcal{A}, \mathcal{B}) k^\star
  } 
\end{align}
for showing replacements of \eqref{eqn:Intermediate__After_applying_the_De_Morgan_identities}--\eqref{eqn:Intermediate__Bound_on_union_of_e_gtr_kstar}. The constant $\mathfrak{d}_1$ will now depend on $\mathfrak{b}_4$.

This completes the first part.
\QuodEratDemonstrandum

What remains is to prove \refLemma{lemma:Bernoulli_concentration_Markov_chains}. This is done in \refAppendixSection{sec:Proof_of_Bernoulli_concentration_Markov_chains}.

\subsubsection{Proof of \refLemma{lemma:Bernoulli_concentration_Markov_chains}}
\label{sec:Proof_of_Bernoulli_concentration_Markov_chains}

Let $a \in \realNumbers$ and $n \in \naturalNumbersPlus$ for now be arbitrary. We will first bound
\begin{equation}
  \probability{
    e(\mathcal{A}, \mathcal{B}) - \mu(\mathcal{A}, \mathcal{B}) 
    \geq 
    a
  } 
  .
\end{equation} 
To do so, we are going to split the sum $e(\mathcal{A},\mathcal{B})$ into two parts. Let $E(T_n) = \{ t \in \{ 0, 1, \ldots, T_n - 1 \} : t \equiv 0 \mod 2 \}$ denote the even numbers up to $T_n-1$, and $O(T_n) = \{ 0, 1, \ldots, T_n-1 \} \backslash E(T_n)$ denote the odd numbers up to $T_n-1$. Write
\begin{align}
  &
  e(\mathcal{A}, \mathcal{B}) - \mu(\mathcal{A}, \mathcal{B}) 
  = 
  \sum_{i \in \mathcal{A}} \sum_{j \in \mathcal{B}} 
  \bigl( \hat{N}_{ij} - N_{ij} \bigr) 
  \nonumber \\ &
  = 
  \sum_{t = 0}^{T_n-1} 
  \sum_{i \in \mathcal{A}} \sum_{j \in \mathcal{B}} 
  \bigl( \indicator{ X_t = i, X_{t+1} = j } - \tfrac{1}{T_n} N_{ij} \bigr)
  \nonumber \\ &
  = 
  \sum_{t \in E(T_n)}
  \sum_{i \in \mathcal{A}} \sum_{j \in \mathcal{B}} 
  \bigl( \indicator{ X_t = i, X_{t+1} = j } - \tfrac{1}{T_n} N_{ij} \bigr)
  + 
  \sum_{t \in O(T_n)} 
  \sum_{i \in \mathcal{A}} \sum_{j \in \mathcal{B}} 
  \bigl( \indicator{ X_t = i, X_{t+1} = j } - \tfrac{1}{T_n} N_{ij} \bigr)
  \nonumber \\ &
  = 
  e_0(\mathcal{A}, \mathcal{B}) - \tfrac{1}{2} \mu(\mathcal{A}, \mathcal{B}) 
  + e_1(\mathcal{A}, \mathcal{B}) - \tfrac{1}{2} \mu(\mathcal{A}, \mathcal{B}),
\end{align}
say. Using a union bound we obtain
\begin{equation}
  \probability{ e(\mathcal{A}, \mathcal{B}) - \mu(\mathcal{A}, \mathcal{B}) \geq a } 
  \leq 
  \probability{ 2e_0(\mathcal{A}, \mathcal{B}) - \mu(\mathcal{A}, \mathcal{B}) \geq a } 
  + \probability{ 2e_1(\mathcal{A}, \mathcal{B}) - \mu(\mathcal{A}, \mathcal{B}) \geq a }
  .
  \label{eqn:lemma_concentration_inequality_split_step}
\end{equation}
It suffices to bound either of the right members by symmetry.

Suppose therefore that, without loss of generality, the probability pertaining to $e_0(\mathcal{A}, \mathcal{B})$ is larger. By Markov's inequality,
\begin{align}
  \probability{ 2 e_0(\mathcal{A}, \mathcal{B}) - \mu(\mathcal{A}, \mathcal{B}) \geq a } 
  &
  \leq 
  \inf_{h > 0} 
  \Bigl\{ 
    \e{-ha} 
    \expectationBig{
      \e{ h ( 2e_0(\mathcal{A}, \mathcal{B}) - \mu(\mathcal{A}, \mathcal{B}) ) }
    }
  \Bigr\}
  \nonumber \\ & 
  = 
  \inf_{h > 0} 
  \Bigl\{ 
    \e{-h( a + \mu(\mathcal{A}, \mathcal{B}) ) } 
    \expectation{ \e{ 2 h e_0(\mathcal{A}, \mathcal{B}) } }
  \Bigr\}
  .
\end{align}
For $t \in \{0, 1, \ldots, T_n \}$, let $\mathcal{F}_t$ be the $\sigma$-algebra generated by $\{ X_0, \ldots, X_t \}$. By the law of total expectation,
\begin{align}
  &
  \expectation{ \e{ 2he_0(\mathcal{A}, \mathcal{B}) } }
  = 
  \expectationBig{ 
    \expectationBig{ 
      \e{ 2he_0(\mathcal{A}, \mathcal{B}) }
    } 
    \Bigm\vert 
    \mathcal{F}_{T_n-2} 
    } 
  \nonumber \\ &
  = 
  \expectationBig{ 
    \expectationBig{
      \e{ 2h \sum_{t \in E(T_n)}  \sum_{i \in \mathcal{A}} \sum_{j \in \mathcal{B}} \indicator{ X_t = i, X_{t+1} = j }) }
      \Bigm\vert 
      \mathcal{F}_{T_n-2} 
    } 
  } 
  \nonumber \\ &
  = 
  \expectationBig{ 
    \e{ 2h \sum_{t \in E(T_n -2)}  \sum_{i \in \mathcal{A}} \sum_{j \in \mathcal{B}} \indicator{ X_t = i, X_{t+1} = j } }
    \expectationBig{
      \e{ 2h \sum_{i \in \mathcal{A}} \sum_{j \in \mathcal{B}} \indicator{ X_{T_n-1} = i, X_{T_n} = j } }
      \Bigm\vert
      \mathcal{F}_{T_n -2} 
    } 
  }
  .
  \label{eqn:lemma_concentration_expansion_conditional_ineq}
\end{align}
We can in principle calculate the inner conditional expectation. An upper bound suffices however, which we will derive next. 

Let $h > 0$. By \refLemma{lem:Asymptotic_upper_bound_on_Nxy}, there exists a constant $\mathfrak{p}_2 > 0$ and integer $m \in \naturalNumbersPlus$ such that for all $n \geq m$,
\begin{align}
  &
  \expectationBig{ 
    \e{ 2h \sum_{i \in \mathcal{A}} \sum_{j \in \mathcal{B}} \hat{N}_{ij}(T_n) }
  \Bigm\vert 
  \mathcal{F}_{T_n-2} 
  }
  \nonumber \\ &  
  = 
  \expectationBig{ 
    \indicator{ ( X_{T_n-1}, X_{T_n} ) \in [n]^2 \backslash ( \mathcal{A} \times \mathcal{B} ) }
    + 
    \e{2h} \indicator{ ( X_{T_n-1}, X_{T_n} ) \in \mathcal{A} \times \mathcal{B} }
  \Bigm\vert 
  \mathcal{F}_{T_n-2} 
  }
  \nonumber \\ &  
  \leq 
  \expectationBig{ 
    1 
    + 
    \e{2h}
    \sum_{i \in \mathcal{A}} \sum_{j \in \mathcal{B}} 
    \indicator{ ( X_{T_n-1}, X_{T_n} ) = (i,j) }
  \Bigm\vert 
  \mathcal{F}_{T_n-2} 
  }
  \nonumber \\ &
  \leq 
  1 
  + 
  \sum_{i \in \mathcal{A}} \sum_{j \in \mathcal{B}} \e{2h} P_{X_{T_n-2}, i} P_{i,j} 
  \leq 
  1
  +
  \e{2h} \mathfrak{p}_2 \frac{|\mathcal{A}||\mathcal{B}|}{n^2}
  . 
  \label{eqn:lemma_concentration_S_step_T_minus_2}
\end{align}
Bounding \eqref{eqn:lemma_concentration_expansion_conditional_ineq} by \eqref{eqn:lemma_concentration_S_step_T_minus_2}, iterating the argument $T_n/2$ times, and using the elementary bound $1 + z \leq \e{z}$ for $z \geq 0$, we obtain 
\begin{equation}
  \expectation{ \e{ 2he_0(\mathcal{A}, \mathcal{B}) } } 
  \leq 
  \Bigl( 
    1
    +
    \e{2h} \mathfrak{p}_2 \frac{|\mathcal{A}||\mathcal{B}|}{n^2}
  \Bigr)^{T_n/2} 
  \leq 
  \exp{ \Bigl( \frac{T_n}{2} \e{2h} \mathfrak{p}_2 \frac{|\mathcal{A}||\mathcal{B}|}{n^2} \Bigr) }.
\end{equation}
Hence, for all $n \geq m$,
\begin{equation}
  \probability{
    2e_0(\mathcal{A}, \mathcal{B}) - \mu(\mathcal{A}, \mathcal{B}) \geq 
    a
  } 
  \leq 
  \inf_{h > 0} 
  \Bigl\{
    \exp{ 
      \Bigl( 
        -h ( a + \mu(\mathcal{A}, \mathcal{B}) ) 
        + 
        \frac{T_n}{2} \e{2h} \mathfrak{p}_2 \frac{|\mathcal{A}||\mathcal{B}|}{n^2} 
      \Bigr) 
    }
  \Bigr\}  
  .
  \label{eqn:Intermediate__Concentration_bound_on_the_discrepancy_value_as_an_infimum}
\end{equation}

Finally, we specify $a = (k-1) \mu(\mathcal{A}, \mathcal{B})$. Observe that $a$ is $n$-dependent. The infimum in \eqref{eqn:Intermediate__Concentration_bound_on_the_discrepancy_value_as_an_infimum} then occurs at
\begin{equation}
  \criticalpoint{h}_n
  =
  \tfrac{1}{2} \ln{ \frac{ k \mu(\mathcal{A}, \mathcal{B}) n^2 }{ T_n \mathfrak{p}_2 |\mathcal{A}||\mathcal{B}| } } 
  .
  \label{eqn:Value_at_which_the_infimum_occurs}
\end{equation}
Substituting \eqref{eqn:Value_at_which_the_infimum_occurs} into \eqref{eqn:Intermediate__Concentration_bound_on_the_discrepancy_value_as_an_infimum} we find that for all $n \geq m$,
\begin{equation}
  \probability{
    2e_0(\mathcal{A}, \mathcal{B}) - \mu(\mathcal{A}, \mathcal{B}) \geq 
    (k-1) \mu(\mathcal{A}, \mathcal{B})
  } 
  \leq
  \exp{ 
    \Bigl( 
      \tfrac{1}{2}
      k\mu(\mathcal{A}, \mathcal{B}) 
      \Bigl( 
        1 
        - \ln{ \frac{ k \mu(\mathcal{A}, \mathcal{B}) n^2 }{ T_n \mathfrak{p}_2 |\mathcal{A}||\mathcal{B}| } } 
      \Bigr) 
    \Bigr) 
  }
  .
  \label{eqn:Intermediate__After_substituting_hOpt}
\end{equation}

By rearranging the left-hand side \eqref{eqn:Intermediate__After_substituting_hOpt} and applying \refCorollary{cor:Expected_discrepancy_property_is_bounded_from_below_by_certain_order}, we find that for all $n \geq m$ and (i) all $k \geq \exp{( 2  - 2 \ln{(\mathfrak{m}_1/\mathfrak{p}_2)})}> 0$,
\begin{equation}
  \probability{
    2e_0(\mathcal{A}, \mathcal{B})
    \geq 
    k \mu(\mathcal{A}, \mathcal{B})
  } 
  \leq 
  \exp{ 
    \Bigl( 
      \tfrac{1}{2} 
      k \mu(\mathcal{A}, \mathcal{B}) 
      \Bigl( 1 - \ln{ \frac{\mathfrak{m}_1 k}{\mathfrak{p}_2} } \Bigr) 
    \Bigr) 
  }
  \eqcom{i}
  \leq 
  \exp{ \Bigl( - \tfrac{1}{4} k \ln{k} \mu(\mathcal{A}, \mathcal{B}) \Bigr) }
  .
\end{equation}
We obtain the same bound for 
$
  \probability{
  2e_1(\mathcal{A}, \mathcal{B})
  \geq 
  k \mu(\mathcal{A}, \mathcal{B})}
$ 
in \eqref{eqn:lemma_concentration_inequality_split_step} \emph{mutatis mutandis}. Together with \eqref{eqn:lemma_concentration_inequality_split_step} this yields that for all $n \geq m$, and all $k \geq k_0$,
\begin{equation}
  \probability{ e(\mathcal{A}, \mathcal{B}) \geq k \mu(\mathcal{A}, \mathcal{B}) } 
  \leq 
  2\exp{ \Bigl( - \tfrac{1}{4} \mu(\mathcal{A}, \mathcal{B}) k \ln{k}  \Bigr) }
  .
\end{equation} 
This completes the proof.
\QuodEratDemonstrandum

\section{Proofs of \refSection{sec:norm_of_BMC_matrices_and_applications}}

\subsection{Proof of \refLemma{lem:Properties_of_epsilon_nets}}
\label{sec:Proof_of_Properties_of_epsilon_nets}

\begin{proof}
  We prove \refLemma{lem:Properties_of_epsilon_nets}\ref{itm:Upper_bound_for_an_epsilon_net} first. Let $\criticalpoint{x}, \criticalpoint{y} \in \mathbb{S}_1^{n-1}(0)$ be such that $\pnorm{A}{} = | (\criticalpoint{x})^{\mathrm{T}} A \criticalpoint{y}|$. Choose $x_{*}, y_{*} \in \mathcal{N}_{\epsilon}$ such that $\pnorm{\criticalpoint{x} - x_{*}}{2} < \epsilon$ and $\pnorm{\criticalpoint{y} - y_{*}}{2} < \epsilon$. This is possible by construction of the $\epsilon$-net, and because $\mathbb{S}_1^{n-1}(0) \subset \mathbb{B}_1^n(0)$ thus implying that $\criticalpoint{x}, \criticalpoint{y} \in \mathbb{B}_1^n(0)$ also. Using the triangle inequality, we find that
  \begin{align}
    \pnorm{A}{}
    =
    |(\criticalpoint{x})^{\mathrm{T}} A \criticalpoint{y}| 
    &
    \leq 
    | ( \criticalpoint{x} - x_{*})^{\mathrm{T}} A \criticalpoint{y} | 
    + | (\criticalpoint{x})^{\mathrm{T}} A ( \criticalpoint{y} - y_{*} ) | 
    \nonumber \\ &
    \phantom{\leq}
    + | x_{*}^{\mathrm{T}} A y_{*}| 
    + | (\criticalpoint{x} - x_{*})^{\mathrm{T}} A ( \criticalpoint{y} - y_{*} ) | 
    \leq 
    (2 \epsilon + \epsilon^2) \pnorm{A}{} 
    + |x_{*}^{\mathrm{T}}Ay_{*}|
    .
  \end{align}
  Rearranging terms, it follows that
  \begin{equation}
    \pnorm{A}{} 
    \leq 
    \frac{1}{1 - 2\epsilon - \epsilon^2} 
    |x_{*}^{\mathrm{T}}Ay_{*}| 
    \leq 
    \frac{1}{1 - 3\epsilon}
    \max_{x,y \in \mathcal{N}_{\epsilon}} |x^{\mathrm{T}}Ay|
    .
  \end{equation}
  This proves \refLemma{lem:Properties_of_epsilon_nets}\ref{itm:Upper_bound_for_an_epsilon_net}.

  Finally, we prove \refLemma{lem:Properties_of_epsilon_nets}\ref{itm:Epsilon_net_induced_by_a_minor}. 
  Observe that for any $a \in \mathbb{B}_0^{|\mathcal{A}|}(0)$, there exists a point $b \in \mathbb{B}_1^{n}(0)$ such that $b^{\mathcal{A}} = a$. Note furthermore that for any $b \in \mathbb{B}_1^{n}(0)$, there exists a point $c \in \mathcal{N}_{\epsilon}$ such that $\pnorm{b - c}{2} \leq \epsilon$. Thus: for any $a \in \mathbb{B}_0^{|\mathcal{A}|}(0)$ there exists a point $c^{\mathcal{A}} \in \mathcal{N}_{\epsilon}^{\mathcal{A}}$ such that $\pnorm{a - c^{\mathcal{A}}}{2}^2 = \pnorm{b^{\mathcal{A}} - c^{\mathcal{A}}}{2}^2 \leq \pnorm{b - c}{2}^2 \leq \epsilon^2$. 
  Observe finally that $\mathcal{N}^{\mathcal{A}}_\epsilon \subseteq \mathbb{B}_0^{|\mathcal{A}|}(0)$. This proves that $\mathcal{N}^{\mathcal{A}}_{\epsilon}$ is an $\epsilon$-net for $(\mathbb{B}_0^{|\mathcal{A}|}(0),\pnorm{\cdot}{2})$.  
\end{proof}

\subsection{Proof of \refProposition{prop:H1_is_bounded_whp_if_the_discrepancy_property_holds}}
\label{secappendix:proof_H1_is_bounded}
Observe that $\mathcal{T}_\epsilon$ is finite. It is therefore sufficient to prove that there exists a constant $\mathfrak{h}_1 > 0$ independent of $n$ and $x,y \in \mathcal{T}_{\epsilon}$ such that for sufficiently large $n$
  \begin{equation}
    H_1(x,y) 
    \leq
    \mathfrak{h}_1 \sqrt{ \frac{T_n}{n}}
  \end{equation}
  almost surely. 

  Consider any pair $x,y \in \mathcal{T}_{\epsilon}$. For $i, j \in \{ 1, 2, \ldots, \lceil \ln{ ( \sqrt{n} / \epsilon ) / \ln{2} } \rceil \}$, define
  \begin{gather}
    \mathcal{A}_i(x) 
    = 
    \Bigl\{ 
      v \in [n]
      :
        \frac{\epsilon}{\sqrt{n}} 2^{i-1} 
      \leq 
      | x_v |
      < 
      \frac{\epsilon}{\sqrt{n}} 2^i 
    \Bigr\},
    \label{eqn:Definition__Sets_Ai}
    \\
    \mathcal{B}_j(y) 
    = 
    \Bigl\{ 
      w \in [n]
      :
      \frac{\epsilon}{\sqrt{n}} 2^{j-1} 
      \leq 
      | y_w |
      < 
      \frac{\epsilon}{\sqrt{n}} 2^j 
    \Bigr\}
    .    
    \label{eqn:Definition__Sets_Bj}
  \end{gather}
  Remark now firstly that by definition of the set of heavy pairs in \eqref{eqn:Definition__Set_of_heavy_pairs}: 
  for all $(v,w) \in \mathcal{L}^{\mathrm{c}}(x,y)$, $| x_v y_w | > (1/n) \sqrt{T_n/n}$. Thus if any component of either $x$ or $y$ is zero, for example $x_{v^\star} = 0$ and/or $y_{w^\star}$ say, then for any $v, w \in [n]$, $(v^\star,w) \not\in \mathcal{L}^{\mathrm{c}}(x,y)$ and/or $(v,w^\star) \not\in \mathcal{L}^{\mathrm{c}}(x,y)$. 
  Secondly, take note of the definition of $\mathcal{T}_\epsilon$ in \eqref{eqn:Definition_T}: 
  if $x, y$ are such that no component at all equals zero, then it must be that for all $(v,w) \in [n]^2$, $|x_v| \geq \epsilon/ \sqrt{n}$ and $|y_w| \geq \epsilon/\sqrt{n}$. 
  Consider these facts and now examine the definitions of $\mathcal{A}_i(x)$, $\mathcal{B}_j(y)$ in \eqref{eqn:Definition__Sets_Ai}, \eqref{eqn:Definition__Sets_Bj}:
  by construction for any $(v,w) \in \mathcal{L}^{\mathrm{c}}(x,y)$, there exist a unique index pair $(i^\star, j^\star)$ such that $(v,w) \in \mathcal{A}_{i^\star}(x) \times \mathcal{B}_{j^\star}(y)$.
  Furthermore, for any index pair $(i,j)$, if $(v,w) \in \mathcal{A}_i(x) \times \mathcal{B}_j(y)$, then $| x_v y_w | > (1/n) \sqrt{T_n/n}$ if $2^{i+j} \geq 4 \sqrt{T_n/n}/\epsilon^2$.
  We therefore have the set equality
\begin{equation}
    \mathcal{L}^{\mathrm{c}}(x,y)
   =
    \bigcup_{ 
      (i,j) 
      : 
      2^{i+j} 
      > 
      4 \sqrt{T_n/n} / \epsilon^2
    } 
    \bigl( 
      \mathcal{A}_i(x) \times \mathcal{B}_j(y) 
    \bigr)
    .
    \label{eqn:Set_of_heavy_pairs_in_terms_of_sets_Ai_and_Bj}
  \end{equation}

  Next, we apply the triangle inequality:
  \begin{equation}
    H_1(x,y) 
    =
    \Bigl| 
      \sum_{ (v, w)\in \mathcal{L}^{\mathrm{c}} } 
      x_v (\hat{N}_\Gamma)_{vw} y_w
    \Bigr|
    \leq 
    \sum_{ (v, w)\in \mathcal{L}^{\mathrm{c}} } 
    |\hat{N}_\Gamma|_{vw} | x_v y_w |
    .
    \label{eqn:F3_after_triangle_inequality}
  \end{equation}
  Observe that
  \begin{align}
    H_1(x,y)
    &
    \eqcom{\ref{eqn:F3_after_triangle_inequality}}
    \leq
    \sum_{ (v,w) \in \mathcal{L}^\mathrm{c} }
    |\hat{N}_{\Gamma}|_{vw} | x_v y_w |
    \eqcom{\ref{eqn:Set_of_heavy_pairs_in_terms_of_sets_Ai_and_Bj}}
    = 
    \sum_{(i,j) : 2^{i+j} > 4 \sqrt{T_n/n} / \epsilon^2 }
    \sum_{ (v,w) \in \mathcal{A}_i \times \mathcal{B}_j }
    | \hat{N}_{\Gamma} |_{vw} | x_v y_w |
    \nonumber \\ &
    \eqcom{\ref{eqn:Definition__Sets_Ai}, \ref{eqn:Definition__Sets_Bj}}
    < 
    \sum_{ (i,j) : 2^{i+j} > 4 \sqrt{T_n/n} / \epsilon^2 }
    \sum_{ (v,w) \in \mathcal{A}_i \times \mathcal{B}_j }
    | \hat{N}_{\Gamma} |_{vw}
    \epsilon 2^i \epsilon 2^j
    \frac{1}{n} 
    \nonumber \\ &
    \eqcom{\ref{eqn:def_e_I_J}}
    =
    \sum_{ (i,j) : 2^{i+j} > 4 \sqrt{T_n/n} / \epsilon^2 }
    \epsilon 2^i \epsilon 2^j
    \frac{1}{n}
    e_\Gamma( \mathcal{A}_i, \mathcal{B}_j )     
    .
    \label{eqn:F3_intermediate_bound_in_terms_of_eAiBj}
  \end{align}
  Substitute $\mu_{ij} \triangleq | \mathcal{A}_i | | \mathcal{B}_j | T / n^2$ (not to be confused by the actual mean of $e_\Gamma(\mathcal{A}_i,\mathcal{B}_j)$) into \eqref{eqn:F3_intermediate_bound_in_terms_of_eAiBj} and collect terms as follows:
  \begin{equation}
    H_1(x,y)
    \leq 
    \sqrt{\frac{T_n}{n}}
    \epsilon^2 \sum_{ (i,j) : 2^{i+j} > \frac{4\sqrt{T_n/n}}{\epsilon^2} }
    \underbrace{ | \mathcal{A}_i | 2^{2i} \frac{1}{n} }_{\alpha_i}
    \cdot
    \underbrace{ | \mathcal{B}_j | 2^{2j} \frac{1}{n} }_{\beta_j}
    \cdot
    \underbrace{ \frac{ e_\Gamma( \mathcal{A}_i, \mathcal{B}_j ) }{ \mu_{ij} 2^{i+j} } \sqrt{ \frac{T_n}{n} } }_{\sigma_{ij}}
    .
    \label{eqn:Definitions_of_alphai_betaj_and_sigmaij}
  \end{equation} 
  
  We will separate the sum in \eqref{eqn:Definitions_of_alphai_betaj_and_sigmaij} in two parts. Define
  \begin{align}
    \mathcal{C}_1
    &
    = 
    \Bigl\{ 
      (i,j) 
      :
      2^{i+j}
      \geq 
      \frac{4\sqrt{\frac{T_n}{n}}}{\epsilon^2} 
      ,
      ( \mathcal{A}_i, \mathcal{B}_j )
      \textnormal{ satisfies }
      \ref{itm:Discrepancy_property__i} \textnormal{ in } \refDefinition{def:Discrepancy_property}
    \Bigr\}
    ,
    \quad
    \textnormal{and}
    \label{eqn:Definition__C1}
    \\
    \mathcal{C}_2
    &
    = 
    \Bigl\{ 
      (i,j) 
      :
      2^{i+j}
      \geq 
      \frac{4\sqrt{T_n/n}}{\epsilon^2}
      ,
      ( \mathcal{A}_i, \mathcal{B}_j )
      \textnormal{ satisfies }
      \ref{itm:Discrepancy_property__ii} \textnormal{ in } \refDefinition{def:Discrepancy_property}
    \Bigr\} \backslash \mathcal{C}_1
    .    
    \label{eqn:Definition__C2}
  \end{align}
  Note that $\mathcal{C}_1 \cap \mathcal{C}_2 = \emptyset$ by definition and moreover, $\mathcal{C}_1 \cup \mathcal{C}_2 = \{ (i,j) : 2^{i+j} \geq 4 \sqrt{T_n/n} / \epsilon^2 \}$ since by assumption the discrepancy property holds. With the definitions in \eqref{eqn:Definitions_of_alphai_betaj_and_sigmaij}--\eqref{eqn:Definition__C2} it thus suffices to show that there exists a constant $\mathfrak{c} > 0$ independent of $n$ such that for sufficiently large $n$,
  \begin{equation}
    \sum_{ (i,j) \in \mathcal{C}_1 \cup \mathcal{C}_2 } 
    \alpha_i \beta_j \sigma_{ij} 
    =
    \Bigl( \sum_{ (i,j) \in \mathcal{C}_1 } + \sum_{ (i,j) \in \mathcal{C}_2 } \Bigr) 
    \alpha_i \beta_j \sigma_{ij}     
    \leq 
    \mathfrak{c} 
    .
  \end{equation}

  Note first that by \eqref{eqn:Definition__Sets_Ai} and \eqref{eqn:Definition__Sets_Bj},
  \begin{align}
    &
    \sum_i \alpha_i 
    = 
    \sum_i 
    | \mathcal{A}_i | \frac{4}{\epsilon^2} \Bigl( 2^{i-1} \frac{\epsilon}{\sqrt{n}} \Bigr)^2
    \eqcom{\ref{eqn:Definition__Sets_Ai}}\leq 
    \frac{4}{\epsilon^2} \sum_{v \in [n]}
    | x_v |^2
    = 
    \frac{4 \pnorm{x}{2}^2}{\epsilon^2}
    \leq 
    \frac{4}{\epsilon^2},
    \nonumber \\ &
    \textnormal{and similarly}
    \quad
    \sum_i \beta_i 
    \eqcom{\ref{eqn:Definition__Sets_Bj}}\leq 
    \frac{4 \pnorm{y}{2}^2}{\epsilon^2}
    .
    \label{eqn:Bounds_on_alpha_i_beta_i}
  \end{align} 
  Also define $e_{ij} \triangleq e_\Gamma( \mathcal{A}_i, \mathcal{B}_j )$ to declutter notation.

  \paragraph{Case $(i,j) \in \mathcal{C}_1$:}
  For $(i,j) \in \mathcal{C}_1$, Property~\ref{itm:Discrepancy_property__i} in \refDefinition{def:Discrepancy_property} is satisfied. This implies that
  \begin{equation}
    \sigma_{ij} 
    = 
    \frac{e_{ij}}{\mu_{ij} 2^{i+j}}
    \sqrt{\frac{T_n}{n}}
    \leq 
    \frac{\mathfrak{d}_1}{2^{i+j}}
    \sqrt{\frac{T_n}{n}}
    \eqcom{\ref{eqn:Definition__C1}}
    \leq 
    \frac{\mathfrak{d}_1 \epsilon^2}{4}
    .
    \label{eqn:Case_C1__Bound_on_sigma_ij}
  \end{equation}
  Together with \eqref{eqn:Bounds_on_alpha_i_beta_i}, \eqref{eqn:Case_C1__Bound_on_sigma_ij} implies
  \begin{equation}
    \sum_{ (i,j) \in \mathcal{C}_1 }
    \alpha_i \beta_j \sigma_{ij}
    \eqcom{\ref{eqn:Case_C1__Bound_on_sigma_ij}}
    \leq
    \sum_{i,j} 
    \alpha_i \beta_j 
    \frac{\mathfrak{d}_1 \epsilon^2}{4}
    =    
    \Bigl( \sum_i \alpha_i \Bigr) 
    \Bigl( \sum_j \beta_j \Bigr) 
    \frac{\mathfrak{d}_1 \epsilon^2}{4}    
    \eqcom{\ref{eqn:Bounds_on_alpha_i_beta_i}}
    \leq 
    \frac{4 \mathfrak{d}_1}{\epsilon^2}
    .
  \end{equation}

  \paragraph{Case $(i,j) \in \mathcal{C}_2$:}
  For $(i,j) \in \mathcal{C}_2$, bounding is more complicated. Presume that $| \mathcal{A}_i | \leq | \mathcal{B}_i |$ without loss of generality. Property~\ref{itm:Discrepancy_property__ii} in  \refDefinition{def:Discrepancy_property} then reduces to
  \begin{equation}
    e_{ij} \ln{ \frac{ e_{ij} }{ \mu_{ij} } } 
    \leq 
    \mathfrak{d}_2 | \mathcal{B}_j | \ln{ \frac{ n }{ | \mathcal{B}_j | } }
    .
    \label{eqn:proof_heavy_pairs1}
  \end{equation}
  Substituting $\mu_{ij} = | \mathcal{A}_i | | \mathcal{B}_j | T / n^2$ and $| \mathcal{B}_j | = \beta_j 2^{-2j} n$ in \eqref{eqn:proof_heavy_pairs1}, we find that Property~\ref{itm:Discrepancy_property__ii} is equivalent to
  \begin{equation}
    \frac{ e_{ij} | \mathcal{A}_i | T_n }{ \mu_{ij} n^2 } \ln{ \frac{ e_{ij} }{ \mu_{ij} } } 
    \leq 
    \mathfrak{d}_2 \ln{ \Bigl( \frac{ 2^{2j} }{ \beta_j } \Bigr) }
    .
  \end{equation}
  Multiply the left- and right-hand sides by $2^{-(i+j)}$ to identify $\sigma_{ij} = e_{ij} \mu_{ij}^{-1} 2^{-(i+j)} \sqrt{T_n/n}$ and write:
  \begin{equation}
    \sigma_{ij} \frac{ | \mathcal{A}_i | }{n} \sqrt{\frac{T_n}{n}} \ln{ \frac{ e_{ij} }{ \mu_{ij} } } 
    \leq 
    \mathfrak{d}_2 2^{-(i+j)} \ln{ \Bigl( \frac{ 2^{2j} }{ \beta_j } \Bigr) }
    .
  \end{equation}
  Recall that $| \mathcal{A}_i | = \alpha_i 2^{-2i} n$. Therefore,
  \begin{equation}
    \alpha_i \sigma_{ij} 
    \sqrt{ \frac{T_n}{n} }
    \ln{ \frac{ e_{ij} }{ \mu_{ij} } } 
    \leq 
    \mathfrak{d}_2 \frac{2^i}{2^j} 
    \Bigl( 
      \ln{ 2^{2j} } 
      -
      \ln{ \beta_j } 
    \Bigr)
    .
    \label{eqn:Intermediate_step}
  \end{equation}
  
 Knowing that \eqref{eqn:Intermediate_step} holds, let us go back to $\mathcal{C}_2$ and separate this set into disjoint subsets. Define
  \begin{align}
    \mathcal{D}_1
    &
    = 
    \bigl\{ 
      (i,j) \in \mathcal{C}_2 
      : 
      \sigma_{ij} \leq 1
    \bigr\}
    , 
    \nonumber \\ 
    \mathcal{D}_2
    & 
    = 
    \bigl\{ 
      (i,j) \in \mathcal{C}_2 \backslash \mathcal{D}_1
      : 
      2^i > 2^j \sqrt{T_n/n}
    \bigr\}
    , 
    \nonumber \\ 
    \mathcal{D}_3 
    &
    = 
    \bigl\{ 
      (i,j) \in \mathcal{C}_2 \backslash ( \mathcal{D}_1 \cup \mathcal{D}_2 )
      : 
      \ln{ ( e_{ij} / \mu_{ij} ) } > \tfrac{1}{4} ( \ln{ 2^{2j} } - \ln{ \beta_j} )
    \bigr\}
    ,
    \nonumber \\ 
    \mathcal{D}_4 
    &
    = 
    \bigl\{ 
      (i,j) \in \mathcal{C}_2 \backslash ( \mathcal{D}_1 \cup \mathcal{D}_2 \cup \mathcal{D}_3 )
      : 
      \ln{ 2^{2j} } \geq - \ln{ \beta_j }
    \bigr\}
    ,
    \nonumber \\ 
    \mathcal{D}_5 
    &
    = 
    \mathcal{C}_2 \backslash ( \mathcal{D}_1 \cup \mathcal{D}_2 \cup \mathcal{D}_3 \cup \mathcal{D}_4 )
    .     
    \label{eqn:Subsets_D1_to_D5}
  \end{align}
  Notice from \eqref{eqn:Subsets_D1_to_D5} that $\mathcal{D}_i \cap \mathcal{D}_j = \emptyset$ for $i \neq j$ and moreover, $\mathcal{D}_1 \cup \cdots \cup \mathcal{D}_5 = \mathcal{C}_2$. We go subcase by subcase and check that in each subcase we obtain the right bound of order $O(\sqrt{T_n/n})$.

  \paragraph{Subcase $(i,j) \in \mathcal{D}_1$:}
  According to the subcase, (i) $\sigma_{ij} \leq 1$. By (ii) expanding the summation range, it follows that
  \begin{equation}
    \sum_{(i,j) \in \mathcal{D}_1} \alpha_i \beta_j \sigma_{ij}
    \eqcom{i}
    \leq 
    \sum_{(i,j) \in \mathcal{D}_1} \alpha_i \beta_j    
    \eqcom{ii}
    \leq 
    \sum_{i,j} \alpha_i \beta_j
    = 
    \Bigl( \sum_i \alpha_i \Bigr) \Bigl( \sum_j \beta_j \Bigr)
    \eqcom{\ref{eqn:Bounds_on_alpha_i_beta_i}}
    \leq 
    \frac{2^4}{\epsilon^4}
    .
  \end{equation}
  Notice that this did not yet require the calculation in \eqref{eqn:Intermediate_step}. We will use it from subcase $\mathcal{D}_3$ onward.

  \paragraph{Subcase $(i,j) \in \mathcal{D}_2$:}
  We have
  \begin{equation}
    e_{ij}
    = 
    e_\Gamma( \mathcal{A}_i, \mathcal{B}_j )
    \eqcom{\ref{eqn:def_e_I_J}}
    =
    \sum_{x \in \mathcal{A}_i} \sum_{y \in \mathcal{B}_j}
    ( \hat{N}_\Gamma )_{x,y}
    .
  \end{equation}
  Eq.~\eqref{eqn:Bounded_degree_when_trimming_more} holds by assumption, i.e., we have a bounded degree. That is, for some $\mathfrak{b}_2 > 0$,
  \begin{equation}
    e_{ij} 
    \leq 
    | \mathcal{A}_i | 
    \mathfrak{b}_2 \frac{T_n}{n}.
  \end{equation}
  Dividing by $\mu_{ij}$ and using its definition before \eqref{eqn:Definitions_of_alphai_betaj_and_sigmaij}, this implies 
  \begin{equation}
    \frac{e_{ij}}{\mu_{ij}}
    \leq 
    \mathfrak{b}_2 
    \frac{n}{ | \mathcal{B}_j | }.
    \label{eqn:Intermediate_bound_on_ratio_eij_muij}
  \end{equation}
  Fix $i \in [n]$. Recall that (i) the subcase implies that $2^i > 2^j \sqrt{T_n/n}$. Therefore,
  \begin{align}
    \sum_j 
    \beta_j \sigma_{ij} \indicator{ (i,j) \in \mathcal{D}_2 }
    &    
    \eqcom{\ref{eqn:Definitions_of_alphai_betaj_and_sigmaij}}
    = 
    \sum_j
    | \mathcal{B}_j | 2^{j-i} \frac{1}{n} 
    \frac{ e_{ij} }{ \mu_{ij} } 
    \sqrt{\frac{T_n}{n}}
    \indicator{ (i,j) \in \mathcal{D}_2 } 
    \nonumber \\ &
    \eqcom{\ref{eqn:Intermediate_bound_on_ratio_eij_muij}}
    \leq 
    \sum_j
    2^{j-i}
    \sqrt{\frac{T_n}{n}}
    \mathfrak{b}_2 \indicator{ (i,j) \in \mathcal{D}_2 }
    \nonumber \\ &
    \eqcom{i}
    \leq 
    \sum_j
    2^{j-i}
    \sqrt{\frac{T_n}{n}}    
    \mathfrak{b}_2 \indicatorBig{ 2^{j-i} < 1 / \sqrt{\frac{T_n}{n}} }
    \eqcom{ii}
    \leq 
    \sqrt{\frac{T_n}{n}}
    \frac{2 \mathfrak{b}_2}{\sqrt{T_n/n}}
    \leq 
    2 \mathfrak{b}_2
    .    
    \label{eqn:Subcase_D2__Intermediate}
  \end{align}
  Here, we have (ii) used that for $r > 1$, $a > 0$,
  \begin{equation}
    \sum_{m=-\infty}^\infty r^m \indicator{ r^m < a }
    = 
    \cdots + r^{m^\star-2} + r^{m^\star-1} + r^{m^\star} 
    =
    r^{m^\star} \bigl( 1 + r^{-1} + r^{-2} + \cdots \bigr) 
    \leq
    \frac{a}{1-1/r}
    \label{eqn:Bound_on_a_truncated_series}
  \end{equation} 
  where $m^\star = \max \{ m \in \mathbb{Z} : r^m < a \}$.

  Therefore, after also expanding the summation range, for certain constants $\mathfrak{c}_1, \mathfrak{c}_2 > 0$ independent of $n$ and for sufficiently large $n$,
  \begin{equation}
    \sum_{(i,j) \in \mathcal{D}_2} 
    \alpha_i \beta_j \sigma_{ij}  
    = 
    \sum_i \alpha_i
    \sum_j \beta_j \sigma_{ij} \indicator{ (i,j) \in \mathcal{D}_2 }
    \eqcom{\ref{eqn:Subcase_D2__Intermediate}}\leq 
    \mathfrak{c}_1 
    \sum_i \alpha_i 
    \eqcom{\ref{eqn:Bounds_on_alpha_i_beta_i}}
    \leq 
    \mathfrak{c}_2
    .        
  \end{equation}

  \paragraph{Subcase $(i,j) \in \mathcal{D}_3$:}
  The subcase implies 
  (i) that $\sigma_{ij} > 1$, 
  (ii) that $2^i \leq 2^j \sqrt{T_n/n}$, and 
  (iii) that $\ln{ ( e_{ij} / \mu_{ij} ) } > \tfrac{1}{4} ( \ln{ 2^{2j} } - \ln{ \beta_j} )$. 
  Bounding \eqref{eqn:Intermediate_step} directly with these facts, we find that 
  \begin{equation}
    \alpha_i \sigma_{ij} 
    \eqcom{\ref{eqn:Intermediate_step}}
    \leq
    \mathfrak{d}_2 \frac{2^i}{2^j} 
    \sqrt{\frac{n}{T_n}}
    \frac{ \ln{ 2^{2j} } - \ln{ \beta_j} }{ \ln{ ( e_{ij} / \mu_{ij} ) } }
    \eqcom{iii}
    <  
    4 \mathfrak{d}_2 \frac{2^i}{2^j} 
    \sqrt{\frac{n}{T_n}}
    .
    \label{eqn:Subcase_D3__Intermediate}
  \end{equation}
  Fix $j$. Then,
  \begin{align}
    \sum_i 
    \alpha_i \sigma_{ij} 
    \indicator{ (i,j) \in \mathcal{D}_3 }
    &
    \eqcom{\ref{eqn:Subcase_D3__Intermediate}}
    \leq
    \sum_i
    4 \mathfrak{d}_2 \frac{2^i}{2^j} 
    \sqrt{\frac{n}{T_n}}
    \indicator{ (i,j) \in \mathcal{D}_3 }    
    \nonumber \\ &
    \eqcom{ii}    
    \leq
    \sum_i
    4 \mathfrak{d}_2 2^{i-j}
    \sqrt{\frac{n}{T_n}}    
    \indicatorBig{ 2^{i-j} \leq \sqrt{\frac{T_n}{n}} }    
    \eqcom{\ref{eqn:Bound_on_a_truncated_series}}
    \leq 
    8 \mathfrak{d}_2 
    .    
    \label{eqn:Subcase_D3__Intermediate_bound}
  \end{align}
  It follows immediately that for some constant $\mathfrak{c}_3 > 0$ independent of $n$ and for sufficiently large $n$,
  \begin{equation}
    \sum_{(i,j) \in \mathcal{D}_3} 
    \alpha_i \beta_j \sigma_{ij}
    = 
    \sum_j 
    \beta_j
    \sum_i 
    \alpha_i \sigma_{ij} 
    \indicator{ (i,j) \in \mathcal{D}_3 }
    \eqcom{\ref{eqn:Subcase_D3__Intermediate_bound}}
    \leq 
    8 \mathfrak{d}_2 
    \sum_j \beta_j 
    \eqcom{\ref{eqn:Bounds_on_alpha_i_beta_i}}
    \leq 
    \mathfrak{c}_3
    .
  \end{equation}

  \paragraph{Subcase $(i,j) \in \mathcal{D}_4$:}
  Recall that the subcase implies 
  (i) that $\sigma_{ij} > 1$, 
  (ii) that $2^i \leq 2^j \sqrt{T_n/n}$,
  (iii) that
  $
    \ln{ ( e_{ij} / \mu_{ij} ) } 
    \leq 
    \tfrac{1}{4} ( \ln{ 2^{2j} } - \ln{ \beta_j})
  $, and 
  (iv) that $\ln{ 2^{2j} } \geq - \ln{ \beta_j }$. 
  Therefore, in this subcase,
  \begin{equation}
    \ln{ ( e_{ij} / \mu_{ij} ) } 
    \eqcom{iii} 
    \leq
    \tfrac{1}{4} ( \ln{ 2^{2j} } - \ln{\beta_j} )    
    \eqcom{iv}    
    \leq 
    \tfrac{1}{2} \ln{2^{2j}}
    =   
    \ln{ 2^{j} }
    .
    \label{eqn:Subcase_D4__Intermediate_bound}
  \end{equation}
  Furthermore,
  \begin{equation}
    0
    \eqcom{i}< 
    \ln{\sigma_{ij}}   
    \eqcom{\ref{eqn:Definitions_of_alphai_betaj_and_sigmaij}}
    =    
    \ln{ ( e_{ij} / \mu_{ij} ) }
    - \ln{2^i} - \ln{2^j}
    + \ln{ \sqrt{\frac{T_n}{n}} } \eqcom{\ref{eqn:Subcase_D4__Intermediate_bound}}\leq  - \ln{2^i} + \ln{ \sqrt{\frac{T_n}{n}} }
    .
    \label{eqn:Subcase_D4__positive}
  \end{equation}  
  Consequently, because \eqref{eqn:Subcase_D4__positive} is strictly positive, the subcase implies that 
  \begin{equation}
    2^i 
    <
    \sqrt{\frac{T_n}{n}}
    .
    \label{eqn:Subcase_D4__2i_is_bounded}
  \end{equation}

  If $(i,j) \in \mathcal{C}_2$ and thus $(i,j) \not\in \mathcal{C}_1$ from the definition in \eqref{eqn:Definition__C1}, we have that $\ln{ (e_{ij} / \mu_{ij}) } > \mathfrak{d}_2$ by the discrepancy property. Thus
  \begin{align}
    \alpha_i \sigma_{ij} \mathfrak{d}_2
    &
    <
    \alpha_i \sigma_{ij} \ln{ \frac{e_{ij}}{\mu_{ij}} }
    \eqcom{\ref{eqn:Intermediate_step}}
    \leq 
    \mathfrak{c}_4 
    \frac{2^i}{2^j} 
    \sqrt{\frac{n}{T_n}}
    \Bigl( 
      \ln{ 2^{2j} } 
      -
      \ln{ \beta_j } 
    \Bigr)    
    \eqcom{iv}
    \leq
    4 \mathfrak{c}_4 2^i
    \sqrt{\frac{n}{T_n}}
    \cdot
    2^{-j} \ln{2^j}
    \eqcom{v}
    \leq 
    4 \mathfrak{c}_4 2^i
    \sqrt{\frac{n}{T_n}}    
    .
    \label{eqn:Subcase_D4__Intermediate_nr2}
  \end{align}
  Here, (v) followed because $z^{-1} \ln{z} \leq 1$ for $z \geq 0$. It follows after also expanding the summation range that
  \begin{align}
    \sum_{ (i,j) \in \mathcal{D}_4 } 
    \alpha_i \beta_j \sigma_{ij} 
    &
    = 
    \sum_j \beta_j
    \sum_i \alpha_i \sigma_{ij} \indicator{ (i,j) \in \mathcal{D}_4 } 
    \eqcom{\ref{eqn:Subcase_D4__Intermediate_nr2}}
    \leq
    \sqrt{\frac{n}{T_n}}
    \sum_j \beta_j
    \sum_i \mathfrak{c}_5 2^i \indicator{ (i,j) \in \mathcal{D}_4 }     
    \nonumber \\ &    
    \eqcom{\ref{eqn:Subcase_D4__2i_is_bounded}}    
    \leq
    \sqrt{\frac{n}{T_n}}
    \sum_j \beta_j
    \sum_i \mathfrak{c}_5 2^i \indicatorBig{ 2^i < \sqrt{\frac{T_n}{n}} }
    \eqcom{\ref{eqn:Bound_on_a_truncated_series}}
    \leq 
    \mathfrak{c}_6 
    \sum_j \beta_j 
    \leq 
    \mathfrak{c}_7
    .
  \end{align}

  \paragraph{Subcase $(i,j) \in \mathcal{D}_5$:}
  Recall that this subcase implies 
  (i) that $\sigma_{ij} > 1$, 
  (ii) that $2^i \leq 2^j \sqrt{T_n/n}$,
  (iii) that
  $
    \ln{ ( e_{ij} / \mu_{ij} ) } 
    \leq 
    \tfrac{1}{4} ( \ln{ 2^{2j} } - \ln{ \beta_j})
  $, 
  and that
  (iv) $\ln{2^{2j}} < - \ln{\beta_j}$.
  Similar to the previous subcase,
  \begin{equation}
    \ln{ \frac{e_{ij}}{\mu_{ij}} } 
    \eqcom{iii}
    \leq
    \tfrac{1}{4} ( \ln{ 2^{2j} } - \ln{ \beta_j})
    \eqcom{iv}
    \leq 
    \tfrac{1}{2} ( - \ln{ \beta_j} )
    \eqcom{v}\leq 
    - \ln{ \beta_j}
    ,
  \end{equation}
  where (v) since $j \geq 1$ we have $-\ln(\beta_j) > \ln{ 2^{2j} } > 0 $. This implies that 
  \begin{equation}
    \frac{e_{ij}}{\mu_{ij}}
    \leq 
    \frac{1}{\beta_j}
   \end{equation}
    or equivalently
    \begin{equation}
    \beta_j \sigma_{ij}
    \eqcom{\ref{eqn:Definitions_of_alphai_betaj_and_sigmaij}}\leq \beta_j 
    \frac{e_{ij}}{\mu_{ij} 2^{i+j}}
    \sqrt{\frac{T_n}{n}}
    \leq
    \frac{1}{2^{i+j}} \sqrt{\frac{T_n}{n}}
    .
    \label{eqn:Subcase_D5__Intermediate}
  \end{equation}
  
  Recall \eqref{eqn:Definition__C2}: for all $(i,j) \in \mathcal{C}_2$, $2^{i+j} \geq 4 \sqrt{T_n/n} / \epsilon^2$. Therefore
  \begin{align}
    \sum_{(i,j) \in \mathcal{D}_5}
    \alpha_i \beta_j \sigma_{ij}
    &
    =
    \sum_i
    \alpha_i
    \sum_j 
    \beta_j \sigma_{ij} \indicator{ (i,j) \in \mathcal{D}_5 }
    \eqcom{\ref{eqn:Subcase_D5__Intermediate}}
    \leq
    \sum_i
    \alpha_i
    \sum_j
    \frac{1}{2^{i+j}} \sqrt{\frac{T_n}{n}}
    \indicator{ (i,j) \in \mathcal{D}_5 }    
    \nonumber \\ &
    \eqcom{\ref{eqn:Definition__C2}}
    \leq 
    \sum_i 
    \alpha_i
    \sum_j
    \frac{1}{2^{i+j}} \sqrt{\frac{T_n}{n}}
    \indicatorBig{ 2^{i+j} \geq \frac{4}{\epsilon^2} \sqrt{\frac{T_n}{n}} }
    \eqcom{vi}
    \leq 
    \sum_i
    \alpha_i
    \frac{\epsilon^2}{4}
    \leq 
    \mathfrak{c}_8 
    .
  \end{align}
  Here, we have (vi) used that for $r > 1$, $a > 0$,
  \begin{equation}
    \sum_{m=0}^\infty \frac{1}{r^m} \indicator{ r^m \geq a }
    = 
    \frac{1}{r^{m^\star}}  + \frac{1}{r^{m^\star+1}}  + \frac{1}{r^{m^\star+2}} + \cdots
    =
    \frac{1}{r^{m^\star}} \Bigl( 1 + \frac{1}{r} + \frac{1}{r^2} + \cdots \Bigr)    
    \leq
    \frac{1/a}{1-1/r}
    ,
  \end{equation}
 where $m^{*} = \min\{ m \in \mathbb{Z} : r^m \geq a\}$. This completes the proof.

\end{document}